\documentclass[11pt,reqno]{amsart}

\usepackage[margin=1in]{geometry}
\usepackage{ifpdf}
\usepackage[numbers,sort&compress]{natbib}

\usepackage{hyperref}
\hypersetup{
  colorlinks=true,
  breaklinks=true,
  plainpages=false,
  citecolor=blue,
  linkcolor=blue,
  urlcolor=blue,
  bookmarksopen=true,
  bookmarksnumbered=true,
  bookmarksdepth=8,
  unicode=true,
  pdftitle={Whittle-Mat\'{e}rn Fields with Variable Smoothness},
  pdfauthor={Hamza Ruzayqat, Wenyu Lei, David Bolin, George Turkiyyah and Omar Knio}
}

\makeatletter
\newtheoremstyle{thmstyletwo}
{12pt plus2pt minus1pt}{12pt plus2pt minus1pt}
{\itshape}{0pt}{\normalfont\bfseries}{}{1em}
{\thmname{#1}\thmnumber{\@ifnotempty{#1}{ }{#2}}\thmnote{ {\the\thm@notefont(#3)}}}
\newtheoremstyle{thmstylethree}
{12pt plus2pt minus1pt}{12pt plus2pt minus1pt}
{\normalfont}{0pt}{\bfseries}{}{1em}
{\thmname{#1}\thmnumber{\@ifnotempty{#1}{ }\@upn{#2}}\thmnote{ {\the\thm@notefont(#3)}}}
\makeatother

\PassOptionsToPackage{final}{changes}

\usepackage[utf8]{inputenc}   
\usepackage[T1]{fontenc}      
\DeclareUnicodeCharacter{2060}{}
\usepackage{amsfonts}
\usepackage{graphicx}
\usepackage{epstopdf}
\usepackage{amssymb}
\usepackage{amsmath}

\usepackage{bm}

\usepackage{enumerate}

\usepackage{comment}

\usepackage[commandnameprefix=ifneeded]{changes}
\definechangesauthor[name={Revision}, color=blue]{rev}
\makeatletter
\@ifpackagewith{changes}{final}{%
  \newenvironment{revblock}{}{}%
  \excludecomment{revdelblock}%
}{%
  \newenvironment{revblock}{\color{blue}}{}%
}
\makeatother

\usepackage{float}       
\usepackage{placeins}    
\usepackage{afterpage}   
\usepackage{caption}     
\usepackage{multirow}
\usepackage[nameinlink]{cleveref}    

\ifpdf
  \DeclareGraphicsExtensions{.eps,.pdf,.png,.jpg}
\else
  \DeclareGraphicsExtensions{.eps}
\fi

\theoremstyle{thmstyletwo}
\newtheorem{theorem}{Theorem}[section]
\newtheorem{lemma}[theorem]{Lemma}
\newtheorem{corollary}[theorem]{Corollary}
\newtheorem{prop}[theorem]{Proposition}
\newtheorem{proposition}[theorem]{Proposition}

\theoremstyle{thmstylethree}
\newtheorem{definition}[theorem]{Definition}
\newtheorem{remark}[theorem]{Remark}

\crefname{prop}{Proposition}{Propositions}
\crefname{proposition}{Proposition}{Propositions}
\crefname{corollary}{Corollary}{Corollaries}
\crefname{hypothesis}{Hypothesis}{Hypotheses}
\crefname{claim}{Claim}{Claims}
\crefname{definition}{Definition}{Definitions}

\newcommand{\sref}[1]{\texorpdfstring{\Cref{#1}}{}}

\usepackage{etoolbox}   

\AtBeginEnvironment{align}{\vspace{-6pt}}
\AtEndEnvironment{align}{\vspace{-8pt}}
\AtBeginEnvironment{align*}{\vspace{-8pt}}
\AtEndEnvironment{align*}{\vspace{-8pt}}

\usepackage{amsopn}

\makeatletter
\newcommand*{\addFileDependency}[1]{
  \typeout{(#1)}
  \@addtofilelist{#1}
  \IfFileExists{#1}{}{\typeout{No file #1.}}
}
\makeatother

\title{Whittle-Mat\'{e}rn Fields with Variable Smoothness}

\author{Hamza Ruzayqat\textsuperscript{$\ast,\dagger$}}
\author{Wenyu Lei\textsuperscript{$\ddagger$}}
\author{David Bolin\textsuperscript{$\ast$}}
\author{George Turkiyyah\textsuperscript{$\ast$}}
\author{Omar Knio\textsuperscript{$\ast$}}

\date{}

\begin{document}

\begin{abstract}
We introduce and analyze a nonlocal generalization of Whittle--Mat\'ern Gaussian fields in which the smoothness parameter varies in space through the fractional order, $s=s(x)\in[\underline s,\overline s]\subset(0,1)$. The model is defined via an integral-form operator whose kernel is constructed from the modified Bessel function of the second kind and whose local singularity is governed by the symmetric exponent $\beta(x,y)=(s(x)+s(y))/2$. This variable-order nonlocal formulation departs from the classical constant-order pseudodifferential setting and raises new analytic and numerical challenges. We develop a novel variational framework adapted to the kernel, prove existence and uniqueness of weak solutions on truncated bounded domains, and derive Sobolev regularity of the Gaussian (spectral) solution controlled by the minimal local order, with realizations lying in $H^r(\mathcal G)$ for every $r<2\underline s-\tfrac{d}{2}$ with $d\ge2$ or $\underline s\le1/2$, with a slightly reduced range in the remaining case (i.e. $d=1$ and $\underline s > 1/2$), hence in $L_2(\mathcal G)$ when $\underline s>d/4$. Here $H^r(\mathcal G)$ denotes the Sobolev space on the bounded domain $\mathcal G$. We also present a finite-element sampling method for the integral model, derive error estimates, and provide numerical experiments in one dimension that illustrate the impact of spatially varying smoothness on the solution covariance. Computational aspects and directions for scalable implementations are discussed.
\end{abstract}

\maketitle

\footnotetext[1]{Computer, Electrical and Mathematical Sciences and Engineering Division, King Abdullah University of Science and Technology, Thuwal 23955, KSA}
\footnotetext[2]{Corresponding author. Email: \texttt{hamza.ruzayqat@kaust.edu.sa}}
\footnotetext[3]{School of Mathematical Sciences, University of Electronic Science and Technology of China, No.\ 2006, Xiyuan Ave, West Hi-Tech Zone, Chengdu 611731, China}

\noindent \textbf{Keywords}: Whittle--Mat\'ern field; variable-order fractional operator; Gaussian random field; Sobolev regularity; finite element sampler; stochastic partial differential equations.\\

\noindent \textbf{MSC classes}: 60G15, 60H15, 35R11, 46E35, 65N30.\\

\section{Introduction}
Gaussian random fields are fundamental models in statistics and machine learning. Gaussian random fields can be specified through their covariance function, and the most commonly used covariance function is the Mat\'ern covariance function \cite{Matern1960},
\begin{align}
\label{eq:mat_cov}
\varrho(x_1,x_2) = \frac{2^{1-\nu} \sigma^2}{\Gamma(\nu)} \left(\kappa \|x_1 - x_2\| \right)^\nu K_\nu (\kappa \|x_1 - x_2\| ), \quad x_1,x_2 \in \mathcal{D}\subseteq\mathbb{R}^d,
\end{align}
where $\Gamma$ denotes the gamma function, $\|\cdot\|$ the Euclidean norm on $\mathbb{R}^d$ and $K_\nu$ is the modified Bessel function of the second kind of order $\nu$. The parameters $\sigma, \nu, \kappa>0$ control
the variance, smoothness, and correlation range of the random field, respectively. 

One drawback of this model is that the parameters are constant over space, which means that it cannot capture non-stationarities, such as a spatially varying practical correlation range. Because non-stationary covariance structures often are observed in applications in statistics, there has been considerable interest over the past 30 years to construct flexible non-stationary extensions of the Mat\'ern covariance function. Some popular approaches are space-deformations \cite{sampson1992nonparametric}, process convolutions \cite{higdon01}, and more explicit constructions which directly formulate valid non-stationary covariance functions \cite{paciorek2006spatial}. One of the most popular approaches is the stochastic partial differential equation (SPDE) approach \cite{Lindgren2011}, which is based on the fact that a stationary solution $u:\mathbb{R}^d\times \Omega \to \mathbb{R}$  ($\Omega$ is a sample space) to the fractional-order SPDE
\begin{align}
\label{eq:frac_spde}
(\kappa^2 - \Delta)^s u = \frac{1}{\mu}\mathcal{W}, \quad \text{ on } \mathbb{R}^d,
\end{align}
has covariance \eqref{eq:mat_cov} \cite{Whittle1963}, with range parameter $\kappa$, smoothness $\nu = 2s - d/2$, and variance
\begin{align}
\sigma^2 = \frac{\Gamma(\nu)}{(4\pi)^{d/2} \kappa^{2\nu}\Gamma(\nu+d/2)\mu^2}.
\end{align}
Here $\Delta$ is the Laplacian, $\mu>0$ is a scaling factor and $\mathcal{W}$ is a spatial Gaussian white noise in $\mathbb{R}^d$. Motivated by the SPDE formulation, \cite{Lindgren2011} proposed extending the Mat\'ern model to accommodate non-stationary behavior and more general spatial domains. Their work sparked a vibrant line of research focused on spatial models constructed via SPDEs; see, for example, \cite{Bakka2019, Bolin2011, Fuglstad2015, Hildeman2021, Bolin2023a, Bolin2023b, Bolin2024}. In particular, the authors derived a connection between Gaussian fields and Gaussian Markov random fields on a bounded domain $\mathcal{D} \subset \mathbb{R}^d$ by using an approximate weak solution to \eqref{eq:frac_spde}, posed on a bounded domain $\mathcal{D} \subset \mathbb{R}^d$, where the operator was equipped with
Neumann boundary conditions. Besides reducing computational cost compared to the covariance-based approximations, \cite{Lindgren2011} also used this idea to extend the Mat\'ern fields to non-stationary models by allowing the parameters $\kappa$ and $\mu$ to be spatially varying functions. This idea was later used to construct the so-called generalized Whittle-Mat\'ern fields, which are formulated by considering the SPDE
\begin{align}
\label{eq:general_Lu=W}
L^s u = \frac{1}{\mu}\mathcal{W}, \quad \text{ on } \mathcal{D},
\end{align}
where the operator is given as $L= \kappa^2 - \nabla \cdot (H\nabla)$, for bounded and non-negative functions $\kappa$, $\mu$ and a (sufficiently nice) matrix-valued function $H$ \cite{Bolin2020, Cox2020}. The term Whittle-Mat\'ern fields is used to denote, in a broader sense, the class of solutions to \eqref{eq:general_Lu=W}, encompassing not only the stationary fields defined on $\mathbb{R}^d$,  but also solutions formulated on bounded domains with boundary conditions and on general manifold domains.

A drawback of the finite element (FE) approximation proposed by \cite{Lindgren2011} is that it is computable only if $2s \in \mathbb{N}$. This restriction has later been removed by combining a FE approximation with a rational approximation of either the fractional operator $L^{-s}$ \cite{Bolin2020} or of the corresponding covariance operator $L^{-2s}$ \cite{Bolin2023a}. Either option results in a computationally efficient approximation which is applicable for $s > d/4$. In contrast to the approach in \cite{Lindgren2011}, where Neumann boundary conditions are employed, the authors in \cite{Bolin2020} consider homogeneous Dirichlet boundary conditions. However, the choice of boundary conditions typically has little practical impact, as the error in the covariance of the field decays rapidly away from the boundary \cite{Potsepaev2010}.

Although the SPDE approach can be used to construct non-stationary Gaussian processes, it is limited to models with constant smoothness. Similarly, most previously proposed methods for constructing non-stationary covariance functions, such as the deformation approach, do not allow for spatially varying smoothness. This is a significant limitation, as many physical phenomena naturally exhibit variability in smoothness across space.
For example if precipitation or temperature is modeled on a global scale, it is likely that these fields are smoother over oceans compared to over land. Motivated by this, we aim in this work to extend the SPDE approach to allow a spatially varying fractional order $s(x)$, which in turn controls the smoothness of the field. To accomplish this, we consider an integral-based formulation of the fractional operator instead of a spectral definition of it as used, for example, in \cite{Lindgren2011} and \cite{Bolin2020}.

The main contributions of this paper are summarized below.
\begin{itemize}
\item We formulate a variable-order Whittle–Mat\'ern integral operator with a heterogeneous Bessel-type kernel on a bounded domain $\mathcal G$. 

\item We develop a rigorous variational framework, including completeness of the adapted energy space, denoted $\mathbb V_{\kappa,s}$, and a well-posed weak formulation.

\item We establish spectral and regularity results, including compact embedding $\mathbb V_{\kappa,s}\hookrightarrow L_2(\mathcal G)$, compact resolvent, two-sided spectral bounds, and Sobolev regularity of the spectral solution by the minimal order $\underline s = \min s(x)$ and prove uniqueness in the class of Gaussian generalized fields.

\item We propose a finite-element sampling algorithm, provide error estimates, and present 1D numerical experiments illustrating the effect of spatially varying smoothness on the solution covariance and comparing with Mat\'ern covariance.
\end{itemize}

The remainder of the paper is organized as follows. In the next subsection we introduce notation used throughout the paper. In \autoref{sec:model_form} we present the integral-form model via a Bessel-kernel representation that underlies our framework for both constant and spatially varying smoothness. In \autoref{sec:weak_form} we construct an energy space adapted to the kernel in the non-constant case, prove that the weak formulation is well posed, and show existence and uniqueness of the Gaussian field solution together with regularity results under suitable sufficient conditions. In \autoref{sec:FEM} we introduce the FE solver for the integral equation, derive error estimates for the FE approximation, describe the quadrature rule used, and present the sampling algorithm. Finally, in \autoref{sec:numerics} we present numerical experiments focused on one spatial dimension to illustrate the proposed algorithm. \added[id=rev]{We emphasize that the model and the analysis of \autoref{sec:model_form}--\autoref{sec:weak_form} are formulated in any dimension $d\ge1$; the restriction to $d=1$ applies only to the finite element solver of \autoref{sec:FEM} and the experiments of \autoref{sec:numerics}; the multidimensional solver and the associated statistical inference are developed in the companion work \cite{Ruzayqat2026sampling}.}

\subsection{Notation}
\label{sec:notation}
For an open, bounded domain $\mathcal{D} \subset \mathbb{R}^d$, we denote the standard inner product on $L_2(\mathcal{D})$ by $\langle u, v\rangle_{L_2(\mathcal{D})} := \int_{\mathcal{D}} u(x) v(x) \, dx, \quad u, v \in L_2(\mathcal{D})$, with the associated norm denoted by $\|u\|_{L_2(\mathcal{D})} := \sqrt{\langle u, u\rangle_{L_2(\mathcal{D})}}$. For a Banach space $\mathbb V$ we write $\langle F,v\rangle_{\mathbb V',\mathbb V}$ for the duality pairing between $F\in\mathbb V'$ and $v\in\mathbb V$ and it denotes the action of the linear bounded functional $F$ on $v$. Let $(\Omega, \mathcal{A}, \mathbb{P})$ be a complete probability space. We denote by $L_2(\Omega; \mathbb{R})$ the space of real-valued random variables with finite second moment, defined by $L_2(\Omega; \mathbb{R}) := \{ X : \Omega \to \mathbb{R} \text{ measurable}:\, \mathbb{E}[|X|^2] < \infty \}$. We denote by $\mathcal{L}(L_2(\mathcal{D}), L_2(\Omega; \mathbb{R}))$ the space of generalized random fields, defined as continuous linear maps from $L_2(\mathcal{D})$ to $L_2(\Omega; \mathbb{R})$. For a map $f \in \mathcal{L}(L_2(\mathcal{D}), L_2(\Omega; \mathbb{R}))$, its action on a test function $\phi \in L_2(\mathcal{D})$ is denoted by $f(\phi)$. For a given function $f(x)$, we denote by $\mathcal{F}(f)$ the Fourier transform, given by $\mathcal{F}(f)(\xi) = \int_{\mathbb{R}^n} f(x)\, e^{-2\pi i x \cdot \xi} \, dx$, for $\xi \in \mathbb{R}^n$.

\section{Model formulation}
\label{sec:model_form}
In this section, we present the integral-form model underlying our framework. We first recall in \autoref{sec:const_smooth} the well-known integral representation of the operator for the constant-smoothness case $s(x)\equiv s$, and then define the associated boundary-value problem on a bounded domain. 
Then, in the subsequent subsection, we extend this formulation to the spatially varying case $s=s(x)$, discuss the corresponding nonlocal operator, and introduce the associated variational and stochastic formulations.

\subsection{Model formulation for constant smoothness}
\label{sec:const_smooth}
Assume $\kappa>0$ and $0<s <1$, then 
\begin{align}
\label{eq:int_form}
(\kappa^2-\Delta)^{s}u :=  \kappa^{2s} u(x) +   \int_{\mathbb{R}^d} \frac{w_{-2s}(x,y)(u(x)-u(y))}{\|x-y\|^{d+2s}} dy, 
\end{align}
by \cite[page 277]{Samko2001}, where
\begin{align*}
w_{-2s}(x,y) := c_s\kappa^\frac{d+2s}{2}~ \|x-y\|^\frac{d+2s}{2} K_{\frac{d+2s}{2}}(\kappa \|x-y\|),
\quad c_s = \frac{2^{1-\frac{d-2s}{2}}}{\pi^{\frac{d}{2}} |\Gamma(-s)|} = \frac{s \, 2^{1-\frac{d-2s}{2}}}{\pi^{\frac{d}{2}} \Gamma(1-s)}.  
\end{align*}
Let $\mathcal{D}$ be a bounded and Lipchitz domain and define $\mathcal{D}^c:= \mathbb{R}^d\setminus \mathcal{D}$ as the complement of $\mathcal{D}$. For a function $u \in L_2(\mathcal{D})$, let $\tilde{u}$ be the zero extension of $u$ on $\mathbb{R}^d$. That is, $\tilde{u}|_{\mathcal{D}} = u$ and $\tilde{u}|_{\mathcal{D}^c} = 0$. We define an operator $A_s$ as
\begin{align*}
A_s u &= (\kappa^2-\Delta)^{s}\tilde{u}= \kappa^{2s} u(x) +   \int_{\mathbb{R}^d} \frac{w_{-2s}(x,y)(\tilde{u}(x)-\tilde{u}(y))}{\|x-y\|^{d+2s}} dy, 
\end{align*}
and consider the equation
\begin{align}
\label{eq:loc_frac_spde}
\begin{array}{rll}
A_s u &=& \frac{1}{\mu}\mathcal{W}, \quad \text{ on }\mathcal{D},\\
u & = & 0, \quad \text{ on }\mathcal{D}^c,
\end{array}
\end{align}
where $\mu>0$ and the white noise $\mathcal W$ is defined as in \cite[Appendix B.2, Definition 6]{Lindgren2011} (see also \cite{Hida1993}), which we include here
\begin{definition}[Gaussian white noise on~$\mathcal G$]\label{def:LN_white}
Let $\mathcal G$ be a compact Riemannian manifold (or bounded domain) with Lebesgue measure. A \emph{Gaussian white noise} $\mathcal W$ on $\mathcal G$ is an $L_2(\mathcal G)$-bounded generalized Gaussian field such that for every finite family $\{\phi_i\}_{i=1}^n\subset L_2(\mathcal G)$ the random vector
$\big\{\mathcal W(\phi_i)\big\}_{i=1}^n$ is jointly Gaussian with $\mathbb E\big[\mathcal W(\phi_i)\big] = 0$, and $ 
\operatorname{Cov}\big(\mathcal W(\phi_i),\mathcal W(\phi_j)\big)
= \langle\phi_i,\phi_j\rangle_{L_2(\mathcal G)}$.
\end{definition}

\begin{remark}
For a test function $\phi\in L_2(\mathcal G)$ the scalar random variable
$\mathcal W(\phi)$ is a well-defined Gaussian.
However, it \emph{does not} automatically follow that there exists a version
$\omega\mapsto \mathcal W(\omega)$ with $\phi \mapsto \langle\phi,\mathcal W(\omega)\rangle_{L_2(\mathcal G)}$ a bounded linear functional on $L_2(\mathcal G)$
for almost every $\omega\in\Omega$. In the sequel, we will provide sufficient conditions for such a statement to hold
(see \sref{thm:W_in_V_prime}).
\end{remark}

\subsection{Model formulation for non-constant smoothness \texorpdfstring{$s(x)$}{}}

To allow for a general definition of the operator $A_s$ with spatially varying fraction we follow a nonlocal calculus framework, see e.g. \cite{Du2013}. For a vector field $v: \mathbb{R}^d \times \mathbb{R}^d \to \mathbb{R}$, we define a general nonlocal divergence operator by $
\mathcal{D}[v](x) := \int_{\mathbb{R}^d} (v(x, y) + v(y, x)) \cdot \alpha(x, y) \, dy$.
In this expression, the vector field $ \alpha(x,y) : \mathbb{R}^d \times \mathbb{R}^d \to \mathbb{R}^d $ possesses the antisymmetric property $\alpha(x,y) = -\alpha(y,x)$. The adjoint operator $ \mathcal{D}^*$ associated with $\mathcal{D}$ in the context of the $L_2(\mathbb{R}^d)$ inner product, which serves as a non-local gradient, is defined through the  identity $\int_{\mathbb{R}^d} u(x)\cdot \mathcal{D}[v](x) dx - \int_{\mathbb{R}^d \times \mathbb{R}^d} \mathcal{D}^*[u](x,y) \cdot v(y, x) dxdy=0$.
The adjoint operator that satisfies the above identity is $\mathcal{D}^*[u](x,y) = - (u(y) - u(x)) \alpha(x, y)$. For a function $u:\mathbb{R}^d \to \mathbb{R}$ and a given symmetric function $a:\mathbb{R}^d\times\mathbb{R}^d \to \mathbb{R}$, \replaced[id=rev]{a direct computation using the definitions of $\mathcal D$ and $\mathcal D^*$ (see  \cite{Du2013}) shows}{ one can shows} that the negative nonlocal Laplacian operator is
\begin{align}
\label{eq:nl-diff}
    \mathcal{L} u &= \mathcal{D}[a ~\mathcal{D}^*[u]](x) =2 \int_{\mathbb R^d} (u(x)-u(y))\gamma (x,y)~ dy
\end{align}
with $\gamma (x,y) := \frac{1}{2} a(x,y) \|\alpha(x,y)\|^2$. Setting $\beta(x,y) := (s(x)+s(y))/2$, we choose $a$ equal to $\tilde{w}_{-2s}$, where
\begin{align*}
\tilde{w}_{-2s}(x,y):=\frac{2^{1-\frac{d}{2}+\beta(x,y)}}{\pi^{\frac{d}{2}} \left|\Gamma(-\beta(x,y))\right|}  \kappa^{\frac{d}{2}+\beta(x,y)}  \|x-y\|^{\frac{d}{2}+\beta(x,y)}   K_{\frac{d}{2}+\beta(x,y)}(\kappa \|x-y\|) 
\end{align*}
and the vector field $\alpha$ to be $ (y-x)/\|y-x\|^{\frac{d}{2} +\beta(x,y) + 1}$, with $0 < \underline{s} \leq s(x) \leq \overline{s} < 1$ (hence $0 < \underline{s} \leq \beta(x,y) \leq \overline{s} < 1$). We then have a nonlocal fractional operator (which corresponds to the negative fractional Laplacian when $s$ is constant) with a varying fraction $s(x)$ given by
 \begin{align*}
 \mathcal{L}^{s(\cdot)} u(x) :=& \int_{\mathbb{R}^d} \frac{\tilde{w}_{-2s}(x,y)(u(x)-u(y))}{\|x-y\|^{d+2\beta(x,y)}} dy\\
 =& \frac{1}{\pi^{\frac{d}{2}}} \int_{\mathbb R^d} \frac{2^{1-\frac{d}{2}+\beta(x,y)}}{\left|\Gamma(-\beta(x,y))\right|} \kappa^{\frac{d}{2}+\beta(x,y)} K_{\frac{d}{2}+\beta(x,y)}(\kappa \|x-y\|) \frac{(u(x)-u(y))}{\|x-y\|^{\frac{d}{2}+\beta(x,y)}}.
 \end{align*}
Considering the extension $\tilde{u}$ of $u$ on $\mathbb{R}^d$, where $u\in L_2(\mathcal{D})$, we have
\begin{align}
\tilde{A}_{s(\cdot)} u &:= \left[\kappa^{2s(x)} + \mathcal{L}^{s(\cdot)}\right] \tilde{u}= \kappa^{2s(x)} u(x) +   \int_{\mathbb{R}^d} \frac{\tilde{w}_{-2s}(x,y)(\tilde{u}(x)-\tilde{u}(y))}{\|x-y\|^{d+2\beta(x,y)}} dy.
\end{align}
\replaced[id=rev]{We then seek to solve a similar equation as in \eqref{eq:loc_frac_spde}. However, in practice, we truncate the exterior $\mathcal{D}^c$ to a bounded subdomain $\mathcal{D}^c_t \subset \mathcal{D}^c$; we require that $\mathcal{D}^c_t$ contains a neighborhood of $\partial\mathcal D$, so that $\overline{\mathcal D}\subset\mathcal G$ (see the beginning of \autoref{sec:weak_form} for the precise geometric setup). This truncation is both natural and necessary. On the modeling side, the Bessel factor $K_{\frac d2+\beta(x,y)}(\kappa\|x-y\|)$ in the kernel of $\tilde A_{s(\cdot)}$ decays exponentially, like $e^{-\kappa r}$ up to algebraic factors, as $r=\|x-y\|\to\infty$ (cf. \cite[Ch.~10]{Olver2010nist}). Interactions between points of $\mathcal D$ and points of the exterior located farther than a few correlation lengths $\kappa^{-1}$ away therefore contribute negligibly to the covariance on $\mathcal D$, and imposing the volume constraint on a sufficiently large buffer $\mathcal D^c_t$ alters the covariance on $\mathcal D$ only slightly; this is consistent with the observation that boundary effects in such models decay rapidly away from the boundary \cite{Potsepaev2010}. On the computational side, the sampling algorithm of \autoref{sec:FEM} requires a bounded, meshable domain. The buffer size $t$ is thus a modeling parameter balancing accuracy against computational cost, and the role of the truncation in the analysis is discussed in \sref{rem:truncation_motivation}.}{We then seek to solve a similar equation as in \eqref{eq:loc_frac_spde}. However, in practice, one should consider truncating $\mathcal{D}^c$ to some finite and bounded domain, $\mathcal{D}^c_t \subset \mathcal{D}^c$.} Let $\mathcal{G} = \mathcal{D} \cup \mathcal{D}^c_t$, we consider solving the following SPDE on $\mathcal{G}$
 \begin{align}
\label{eq:loc_frac_spde1}
\begin{array}{rll}
\tilde{A}_{s(\cdot)} u &=& \frac{1}{\mu}\mathcal{W}, \quad \text{ on }\mathcal{D},\\
u & = & 0, \quad \text{ on }\mathcal{D}^c_t.
\end{array}
\end{align}
Note that when $s$ is a constant, $\tilde{w}_{-2s} = w_{-2s}$ and $\tilde{A}_{s(\cdot)}$ is just $A_s$. Therefore, from now on, we focus on the general case where $s = s(x)$. 
 
\begin{revblock}
\begin{remark}\label{rem:truncation_motivation}
 The truncated problem \eqref{eq:loc_frac_spde1} should be viewed as an approximation of the whole-exterior problem whose covariance on $\mathcal D$ is expected to approach that of the untruncated model as the buffer size $t$ grows, in view of the exponential decay of the kernel discussed in \autoref{sec:model_form}.
\end{remark}
\end{revblock}

\replaced[id=rev]{We clarify how the present model relates to existing Whittle--Mat\'ern fields. (i) On the whole space with constant $s$, the operator $\tilde A_{s(\cdot)}$ coincides with the integral representation of $(\kappa^2-\Delta)^s$ on $\mathbb R^d$, so the model recovers the classical Mat\'ern covariance \eqref{eq:mat_cov} through \eqref{eq:frac_spde}. (ii) On the truncated domain, already with constant $s$ the model differs from the generalized Whittle--Mat\'ern fields of \eqref{eq:general_Lu=W}, which are defined through spectral fractional powers of the second-order elliptic operator $L=\kappa^2-\nabla\cdot(H\nabla)$. In contrast, our weak form involves the kernel integral over $\mathcal G\times\mathcal G$ with the volume constraint on $\mathcal D_t^c$, so the associated operator is of censored (regional) type on $\mathcal G$ rather than a spectral fractional power on $\mathcal D$; because of the truncation it also differs from the restricted integral fractional Laplacian on $\mathcal D$, which would carry the additional tail term $u(x)\int_{\mathbb R^d\setminus\mathcal G}\gamma(x,y)\,dy$ and a constraint on all of $\mathcal D^c$. These three realizations of the fractional Laplacian---spectral powers, restricted integral, and censored/regional---are different operators with different solutions (see, e.g., \cite{Grubb2015}). (iii) With spatially varying $s(x)$ the model is a new member of the broad class of generalized Whittle--Mat\'ern fields, understood as fields defined through a fractional-order operator equation driven by white noise as in \eqref{eq:general_Lu=W}; it is not a special case of the spectral SPDE models, and as the buffer grows, $\mathcal D_t^c\uparrow\mathcal D^c$, it is expected to approach the whole-exterior model (cf. \sref{rem:truncation_motivation}). Finally, we note that for non-convex $\mathcal D$ the covariance between two points depends on their Euclidean distance through the exterior, not on the intrinsic geometry of $\mathcal D$; this is inherent to kernel-based nonlocal models.}

%

\begin{revblock}
\begin{remark}\label{rem:nonlocal_dirichlet}
The condition $u=0$ on $\mathcal D_t^c$ in \eqref{eq:loc_frac_spde1} is the nonlocal Dirichlet (volume) constraint of the nonlocal calculus framework \cite{Du2013}, which our model formulation follows (see \autoref{sec:model_form}). For nonlocal operators with a kernel singularity $|x-y|^{-(d+2\beta)}$, interactions extend over positive distances, so data must be prescribed on sets of positive volume rather than on the boundary, which is a set of measure zero for the nonlocal pairing. Consistently, no boundary condition on $\partial\mathcal G$ is required in \eqref{eq:loc_frac_spde1}. The equation is posed on $\mathcal D$ and the constraint on $\mathcal D_t^c$, and $\partial\mathcal G$ does not enter the weak formulation of \autoref{sec:weak_form}. Since $\mathcal D_t^c$ contains a neighborhood of $\partial\mathcal D$ (see the beginning of \autoref{sec:weak_form}), the constraint region separates $\mathcal D$ from $\partial\mathcal G$. In the constant-order case this is the standard zero exterior condition of nonlocal volume-constrained problems \cite{Du2013,Acosta2017,Ainsworth2017}.
\end{remark}
\end{revblock}

\section{Weak formulation}
\label{sec:weak_form}
We now consider a weak form of \eqref{eq:loc_frac_spde1}. Let $\mathcal D\subset\mathbb R^d$ be bounded Lipschitz, let $\mathcal D^c_t$ be a bounded truncation of the exterior\added[id=rev]{ that contains a neighborhood of $\partial\mathcal D$, so that $\overline{\mathcal D}\subset\mathcal G$} and set $\mathcal G:=\mathcal D\cup\mathcal D^c_t$ (a bounded Lipschitz set). \added[id=rev]{Unless otherwise stated, we work under the following standing assumptions throughout the analysis. We assume $\mathcal D$ is bounded Lipschitz, $\mathcal D_t^c$ contains a neighborhood of $\partial\mathcal D$, $\kappa>0$, and $s\in C(\overline{\mathcal G})$ with $0<\underline s\le s(x)\le\overline s<1$ is log-H\"older continuous, that is, there exists $C_{\log}>0$ with
\begin{align}
\label{eq:log_holder}
|s(x)-s(y)|\,\log\Big(e+\frac{1}{|x-y|}\Big)\le C_{\log},\qquad x,y\in\overline{\mathcal G},\ x\neq y.
\end{align}
Any further hypotheses are stated where they are used.} Define the energy space $\mathbb V_{\kappa, s} := \{v\in L_2(\mathcal{G}) : \|v\|_{\mathbb V_{\kappa, s}}<\infty \text{ and } v = 0 ~\text{ a.e. in } \mathcal{D}^c_t\}$,
where the energy norm $\|.\|_{\mathbb V_{\kappa, s}}$ is given by $\|v\|_{\mathbb V_{\kappa, s}}^2 := \|\kappa^{s(x)}v\|_{L_2(\mathcal{G})}^2 + |v|_{\mathbb V_{\kappa, s}}^2$ with
\begin{align*}
    |v|_{\mathbb V_{\kappa, s}}^2 := \int_{\mathcal{G}}\int_{\mathcal{G}}
    (v(x) - v(y))^2\gamma(x,y) dy dx \quad\text{and}\quad
    \gamma(x,y) = \frac{1}{2}\frac{\tilde{w}_{-2s}(x,y)}{|x-y|^{d+2\beta(x,y)}}.
\end{align*}
We will prove that this space is a Hilbert space and that the standard variational tools apply. Define
\begin{align*}
(u,v)_{\mathbb V_{\kappa, s}} := \int_{\mathcal D}\kappa^{2s(x)} u(x) v(x)\,dx + \int_{\mathcal{G}}\int_{\mathcal{G}} (u(x)-u(y))(v(x)-v(y))\gamma(x,y)\,dy\,dx.
\end{align*}
Then $(\cdot,\cdot)_{\mathbb V_{\kappa, s}}$ is an inner product on $\mathbb V_{\kappa, s}$, $\|u\|_{\mathbb V_{\kappa, s}}^2=(u,u)_{\mathbb V_{\kappa, s}}$ and $\mathbb V_{\kappa, s}$ is a vector space. We now show that $(\mathbb V_{\kappa, s},\|\cdot\|_{\mathbb V_{\kappa, s}})$ is complete and thus a Hilbert space with inner product $(\cdot,\cdot)_{\mathbb V_{\kappa, s}}$.

\begin{theorem}[Completeness]\label{thm:V_complete}
The space $(\mathbb V_{\kappa, s},\|\cdot\|_{\mathbb V_{\kappa, s}})$ is complete.
\end{theorem}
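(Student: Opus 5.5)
The plan is the standard Fatou-lemma completeness argument, as for fractional Sobolev spaces. Let $(v_n)$ be Cauchy in $\mathbb V_{\kappa,s}$. Since $\kappa>0$ and $\underline s\le s(x)\le\overline s$, the weight satisfies $\kappa^{2s(x)}\ge \min(\kappa^{2\underline s},\kappa^{2\overline s})=:c_\kappa>0$. Hence
\begin{align*}
\|v\|_{L_2(\mathcal G)}^2\le c_\kappa^{-1}\|\kappa^{s(\cdot)}v\|_{L_2(\mathcal G)}^2\le c_\kappa^{-1}\|v\|_{\mathbb V_{\kappa,s}}^2 ,
\end{align*}
so $(v_n)$ is Cauchy in $L_2(\mathcal G)$. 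It therefore converges in $L_2(\mathcal G)$ to some $v$. Passing to a subsequence, we may assume $v_{n_k}\to v$ a.e. in $\mathcal G$. Since every $v_n$ vanishes a.e. on $\mathcal D^c_t$, so does $v$. The weighted $L_2$ part also converges, since $\kappa^{2s(x)}$ is bounded above.

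Next I control the seminorm. First, $\gamma\ge0$ on $\mathcal G\times\mathcal G$: $K_\nu(r)>0$ for $r>0$, and the prefactor $2^{1-d/2+\beta}\kappa^{d/2+\beta}/(\pi^{d/2}|\Gamma(-\beta)|)$ is positive because $\beta\in(0,1)$. This positivity is all that is needed. Set $w_k(x,y):=(v_{n_k}(x)-v_{n_k}(y))\sqrt{\gamma(x,y)}$. Then $|v_{n_k}|_{\mathbb V_{\kappa,s}}=\|w_k\|_{L_2(\mathcal G\times\mathcal G)}$, and the a.e. convergence of $v_{n_k}$ gives a.e. convergence of $w_k$ to $w:=(v(x)-v(y))\sqrt{\gamma}$ on $\mathcal G\times\mathcal G$. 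By Fatou's lemma,
\begin{align*}
|v|_{\mathbb V_{\kappa,s}}^2\le\liminf_{k}|v_{n_k}|_{\mathbb V_{\kappa,s}}^2<\infty ,
\end{align*}
because Cauchy sequences are bounded. Thus $v\in\mathbb V_{\kappa,s}$.

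To get norm convergence, fix $\varepsilon>0$ and choose $N$ with $\|v_n-v_m\|_{\mathbb V_{\kappa,s}}<\varepsilon$ for $n,m\ge N$. For fixed $n\ge N$, apply Fatou again to $(v_n-v_{n_k})(x)-(v_n-v_{n_k})(y)$ as $k\to\infty$. Together with the $L_2$ convergence of the weighted term, this gives $\|v_n-v\|_{\mathbb V_{\kappa,s}}^2\le\liminf_k\|v_n-v_{n_k}\|_{\mathbb V_{\kappa,s}}^2\le\varepsilon^2$. So the whole sequence converges to $v$ in $\mathbb V_{\kappa,s}$.

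The only delicate points are measurability and positivity of $\gamma$ under the variable order $\beta(x,y)$, plus the a.e. convergence on the product space. For measurability, $\gamma$ is continuous off the diagonal, since $s$ is continuous and $\nu\mapsto K_\nu(r)$ and $\Gamma$ are continuous. The diagonal has measure zero. For the product-space convergence, note that if $N\subset\mathcal G$ is the null set where $v_{n_k}(x)\not\to v(x)$, the exceptional set $(N\times\mathcal G)\cup(\mathcal G\times N)$ is null. No lower bound on $\gamma$, and neither the log-H\"older condition nor any embedding, is required for completeness. I expect no step to be a real obstacle beyond checking the sign of the kernel prefactor.
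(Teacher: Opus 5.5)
Your proof is correct and follows essentially the same route as the paper. Both arguments reduce to convergence in $L_2(\mathcal G)$ via the positive lower bound on $\kappa^{2s(x)}$, extract an a.e.\ convergent subsequence, and transfer this to the seminorm. The one difference is that the paper uses completeness of $L_2(\mathcal G\times\mathcal G,\gamma\,dx\,dy)$ to get a limit $\Phi$ of $v_n(x)-v_n(y)$ and then identifies $\Phi=v(x)-v(y)$ a.e.; you instead apply Fatou's lemma twice, and you also make explicit the check that $v=0$ a.e.\ on $\mathcal D^c_t$, which the paper leaves implicit.
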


\replaced[id=rev]{We next study the approximation of elements of $\mathbb V_{\kappa, s}$ by smooth, compactly supported functions.}{We also prove that $C_c^\infty(\mathcal G)\cap\{v=0\text{ a.e. in }\mathcal D^c_t\}$ is dense in $\mathbb V_{\kappa, s}$:}
\begin{revblock}
One point requires care here. Membership in $\mathbb V_{\kappa,s}$ forces only $\operatorname{supp}\bar v\subseteq\overline{\mathcal D}$, where $\bar v$ denotes the extension of $v$ by zero, and any smoothing procedure spreads the support slightly. Smooth approximants can therefore not be expected to satisfy the volume constraint exactly; it is recovered only outside a collar around $\partial\mathcal D$ whose measure tends to zero, and the statement below is formulated accordingly.
\end{revblock}
\begin{prop}[Smooth approximation]
\label{prop:density_moll}
\replaced[id=rev]{For $\varepsilon>0$ let $\mathcal D^+_\varepsilon:=\{x\in\mathbb R^d:\operatorname{dist}(x,\mathcal D)\le\varepsilon\}$ be the closed $\varepsilon$-neighbourhood of $\mathcal D$ and let $\mathcal C_\varepsilon:=\mathcal D^+_\varepsilon\setminus\mathcal D$ be the collar of width $\varepsilon$ around $\partial\mathcal D$, so that $|\mathcal C_\varepsilon|\to0$ as $\varepsilon\downarrow0$. Then, for every $v\in\mathbb V_{\kappa, s}$ and every sufficiently small $\varepsilon>0$, there exists $v_\varepsilon\in C_c^\infty(\mathcal G)$ with $\operatorname{supp}v_\varepsilon\subseteq\mathcal D^+_\varepsilon$, hence vanishing on $\mathcal D^c_t\setminus\mathcal C_\varepsilon$, such that $\|v_\varepsilon-v\|_{\mathbb V_{\kappa, s}}\to0$ as $\varepsilon\downarrow0$, the energy norm being understood as the expression defining it, which is meaningful for any function on $\mathcal G$.}{For every $v\in\mathbb V_{\kappa, s}$ there exists a sequence $v_\varepsilon\in C_c^\infty(\mathcal G)$ with $v_\varepsilon=0$ a.e. on $\mathcal D^c_t$ and $\|v_\varepsilon-v\|_{\mathbb V_{\kappa, s}}\to0$.}
\end{prop}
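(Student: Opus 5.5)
The plan is to mollify the zero extension $\bar v$ and control the energy norm by a translation-continuity argument adapted to the variable-order kernel. Fix $v\in\mathbb V_{\kappa,s}$ and let $\bar v\in L_2(\mathbb R^d)$ be its extension by zero, so $\operatorname{supp}\bar v\subseteq\overline{\mathcal D}$. Choose a standard mollifier $\rho_\varepsilon$ supported in the ball $B_\varepsilon(0)$ and set $v_\varepsilon:=\rho_\varepsilon*\bar v$. Then $v_\varepsilon\in C^\infty$ and $\operatorname{supp}v_\varepsilon\subseteq\mathcal D^+_\varepsilon$. Because $\mathcal D^c_t$ contains a neighborhood of $\partial\mathcal D$, the set $\mathcal D^+_\varepsilon$ is compactly contained in $\mathcal G$ for small $\varepsilon$, so $v_\varepsilon\in C_c^\infty(\mathcal G)$. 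Since $\kappa^{s(x)}$ is bounded above and below on $\overline{\mathcal G}$, the $L_2$ part of the norm tends to zero by standard mollifier convergence. The remaining task is to show $|v_\varepsilon-v|_{\mathbb V_{\kappa,s}}\to0$.

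For the seminorm, write $w_\varepsilon:=v_\varepsilon-\bar v$ and $D_w(x,y):=w_\varepsilon(x)-w_\varepsilon(y)$. Minkowski's integral inequality gives
\begin{align*}
|w_\varepsilon|_{\mathbb V_{\kappa,s}}\le\int\rho_\varepsilon(z)\Big(\int_{\mathcal G}\int_{\mathcal G}\big(D_z(x,y)\big)^2\gamma(x,y)\,dy\,dx\Big)^{1/2}dz,
\end{align*}
where $D_z(x,y):=(\bar v(x-z)-\bar v(y-z))-(\bar v(x)-\bar v(y))$. It therefore suffices to prove that translation is continuous in the weighted seminorm, that is, $\Phi(z):=\int\!\!\int_{\mathcal G^2}D_z^2\,\gamma\to0$ as $z\to0$.

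The obstacle is that $\gamma$ is not translation invariant, because $\beta(x,y)$ varies. Substituting $(x,y)\mapsto(x+z,y+z)$ turns the translated part into an integral of $(\bar v(x)-\bar v(y))^2$ against $\gamma(x+z,y+z)$. I will use three facts about the kernel.
\begin{enumerate}
\item Two-sided bounds: $\gamma(x,y)\simeq|x-y|^{-d-2\beta(x,y)}$ for $|x-y|\le1$, and $\gamma$ is bounded for $|x-y|\ge1$ on the bounded set $\mathcal G$. These follow from $K_\nu(r)\sim\tfrac12\Gamma(\nu)(r/2)^{-\nu}$ as $r\to0$ and from $\beta\in[\underline s,\overline s]$.
\item Comparability of shifted kernels: by the log-H\"older condition \eqref{eq:log_holder}, $|x-y|^{-2|\beta(x+z,y+z)-\beta(x,y)|}\le C$ uniformly whenever $|x-y|\ge|z|$. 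When $|x-y|<|z|$, the difference $|\beta(x+z,y+z)-\beta(x,y)|$ is at most $C_{\log}/\log(e+1/|z|)$, and $|x-y|^{-\delta}$ with $\delta$ this small must be handled more carefully. I expect this to be the crux. My first attempt is to show $\gamma(x+z,y+z)\le C\gamma(x,y)$ for all $x,y$ with $|z|$ small, using the continuity of $s$ together with the log-H\"older bound.
\item Pointwise convergence: $\gamma(x+z,y+z)\to\gamma(x,y)$ as $z\to0$, by continuity of $s$ and of $K_\nu$ in $\nu$.
\end{enumerate}

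With these facts, $\Phi(z)\to0$ follows from a Vitali/Brezis–Lieb type argument. The first step is to truncate the near-diagonal region $|x-y|<\eta$, where the uniform comparability bound in fact 2 controls both pieces by $C\int\!\!\int_{|x-y|<\eta}(\bar v(x)-\bar v(y))^2\gamma$, which is small for small $\eta$ because the tail of a convergent integral vanishes. Off the diagonal the kernel is bounded, so $L_2$-continuity of translations gives convergence. One also has to treat the region where the shifted points leave $\mathcal G$ or enter the collar. There $\bar v$ vanishes, and the identity
\begin{align*}
\int\!\!\int_{\mathcal G^2}(\bar v(x)-\bar v(y))^2\gamma\ \ge\ \int_{\mathcal D}\bar v(x)^2\int_{\mathcal G\setminus\mathcal D}\gamma(x,y)\,dy\,dx
\end{align*}
supplies integrable control of the boundary interaction terms. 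Letting $\eta\downarrow0$ after $z\to0$ completes the proof. If the uniform comparability in fact 2 fails near the diagonal, my fallback is to replace $\gamma$ by the frozen-order kernel $|x-y|^{-d-2\beta(x,x)}$ locally on small patches, using a partition of unity, and to control the error by the log-H\"older estimate.
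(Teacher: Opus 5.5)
\noindent There is a genuine gap, and it is more serious than a missing estimate at the step you flag as the crux: the statement you reduce to is false. After Minkowski's inequality you need $\Phi(z)\to0$, i.e.\ that $z\mapsto\bar v(\cdot-z)$ is continuous in the $\gamma$-seminorm at all scales, including $|x-y|<|z|$. For variable $s$ the space $\mathbb V_{\kappa,s}$ is not translation invariant, and $\Phi(z)$ can be $+\infty$.

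Take $d=1$, $s$ Lipschitz and strictly increasing near some $x_0\in\mathcal D$, and let $v$ be a smooth cutoff, supported in $|x-x_0|<\tfrac14$, of $|x-x_0|^{s(x_0)-1/2}\log(1/|x-x_0|)^{-1}$. Near $x_0$ this profile has finite $H^{s(x_0)}$-energy but infinite $H^{s(x_0)+\delta}$-energy for every $\delta>0$. One checks that $v\in\mathbb V_{\kappa,s}$, using that $v$ is smooth at scales below $|x-x_0|$ and $s$ is Lipschitz. For $z>0$ the translate $\bar v(\cdot-z)$ carries the same singularity at $x_0+z$. On pairs in a ball $B_\ell(x_0+z)$ with $C\ell<s(x_0+z)-s(x_0)$ one has $\beta(x,y)\ge s(x_0+z)-C\ell>s(x_0)$. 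Hence $|\bar v(\cdot-z)|_{\mathbb V_{\kappa,s}}=\infty$, and since $|v|_{\mathbb V_{\kappa,s}}<\infty$, also $\Phi(z)=\infty$. For a standard mollifier this happens on a set of $z$ of positive $\rho_\varepsilon$-measure for every $\varepsilon$, so your Minkowski bound is $+\infty$. One-sided mollifiers do not help once $v$ has a second such singularity where $s$ decreases.

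Correspondingly, your comparability $\gamma(x+z,y+z)\le C\gamma(x,y)$ fails for $|x-y|\ll|z|$, where the ratio behaves like $|x-y|^{-(s(x+z)+s(y+z)-s(x)-s(y))}$. Your fallback does not repair this. With frozen kernels $|h|^{-(d+2s(x))}$, translating $x$ by $z$ produces the unbounded factor $|h|^{-2(s(x+z)-s(x))}$ at scales $|h|<|z|$. No kernel estimate can rescue a route that requires every individual translate to have finite energy.

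\noindent The missing idea is to keep the $z$-average inside the norm below the mollification scale, which is what the paper does. The log-H\"older frozen-order comparison first reduces the near-diagonal energy of $w_\varepsilon=v_\varepsilon-\bar v$ to
\[
N_{r_0}(w_\varepsilon)=\int_{|h|\le r_0}\int|\Delta_hw_\varepsilon(x)|^2\,|h|^{-(d+2s(x))}\,dx\,dh,
\]
and the $h$-integral is then split at $|h|=\varepsilon$.

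For $\varepsilon<|h|\le r_0$ your translation idea is fine. There $|z|\le\varepsilon<|h|$, so the log-H\"older condition gives $|h|^{-2(s(u+z)-s(u))}\le e^{2C_{\log}}$, and Jensen's inequality plus dominated convergence apply.

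For $|h|\le\varepsilon$ the weight is never translated. One writes
\[
\Delta_hv_\varepsilon(x)=\int[\rho_\varepsilon(y+h)-\rho_\varepsilon(y)]\,[\bar v(x-y)-\bar v(x)]\,dy ,
\]
and uses $|\nabla\rho_\varepsilon|\lesssim\varepsilon^{-d-1}$ to get
\[
|\Delta_hv_\varepsilon(x)|^2\lesssim|h|^2\varepsilon^{-2-d}\int_{|y|\le2\varepsilon}|\bar v(x-y)-\bar v(x)|^2\,dy .
\]
The differences are anchored at $x$, so the weight stays $|y|^{-(d+2s(x))}$. The whole fine-scale contribution is therefore bounded by $C\,N_{2\varepsilon}(\bar v)$, which tends to $0$.
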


Next we show the equivalence between $\mathbb V_{\kappa,s}$ and $\mathbb V_{\kappa',s}$ for $\kappa\neq \kappa'$.

\begin{theorem}[Equivalence of energy spaces for different $\kappa$]\label{thm:V_kappa_equiv}
Let $\mathcal G$ be as before. Assume $s\in C(\overline{\mathcal G}),\, 0<\underline s\le s(x)\le\overline s<1$, and fix two positive constants $\kappa,\kappa'>0$, $\kappa\neq \kappa'$. Let $\mathbb V_{\kappa,s}$ and $\mathbb V_{\kappa',s}$ denote the energy spaces built from the kernels $\gamma_{\kappa}(x,y)$ and $\gamma_{\kappa'}(x,y)$ corresponding to parameters $\kappa$ and $\kappa'$ (same $s$), and with energy norms
\begin{align*}
\|v\|_{\mathbb V_{\kappa,s}}^2=\|\kappa^{s(\cdot)} v\|_{L_2(\mathcal G)}^2 + 
\iint_{\mathcal G\times\mathcal G} (v(x)-v(y))^2 \gamma_{\kappa}(x,y)\,dy\,dx,
\end{align*}
(and analogously for $\kappa'$). Then there exist positive constants $C_1,C_2$ (depending on $d,\underline s,\overline s,\kappa,\kappa'$ and $|\mathcal G|$) such that for every $v\in L_2(\mathcal G)$, $
C_1\,\|v\|_{\mathbb V_{\kappa',s}} \le \|v\|_{\mathbb V_{\kappa,s}} \le C_2\,\|v\|_{\mathbb V_{\kappa',s}}$. In other words $\mathbb V_{\kappa,s}\simeq \mathbb V_{\kappa',s}$.  ⁠
\end{theorem}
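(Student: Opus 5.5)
The plan is to compare the two norms term by term: the zeroth-order part directly, and the seminorm part through a pointwise comparison of the kernels $\gamma_\kappa$ and $\gamma_{\kappa'}$ on $\mathcal G\times\mathcal G$. Since $\mathcal G$ is bounded, only the range $0<r:=|x-y|\le R:=\operatorname{diam}(\mathcal G)$ matters.

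\textbf{Zeroth-order term.} Since $\kappa^{s(x)}$ lies between $\min(\kappa^{\underline s},\kappa^{\overline s})$ and $\max(\kappa^{\underline s},\kappa^{\overline s})$, we have
\begin{align*}
\|\kappa^{s(\cdot)}v\|_{L_2(\mathcal G)}\simeq\|v\|_{L_2(\mathcal G)}\simeq \|\kappa'^{s(\cdot)}v\|_{L_2(\mathcal G)},
\end{align*}
with constants depending only on $\kappa,\kappa',\underline s,\overline s$.

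\textbf{Kernel comparison.} Write $\nu:=\tfrac d2+\beta(x,y)\in[\tfrac d2+\underline s,\tfrac d2+\overline s]$. The prefactor $2^{1-d/2+\beta}/(\pi^{d/2}|\Gamma(-\beta)|)$ is common to both kernels, so it cancels in the ratio:
\begin{align*}
\frac{\gamma_\kappa(x,y)}{\gamma_{\kappa'}(x,y)}=\Big(\frac{\kappa}{\kappa'}\Big)^{2\nu}\frac{(\kappa r)^{\nu}K_\nu(\kappa r)}{(\kappa' r)^{\nu}K_\nu(\kappa' r)}.
\end{align*}
The first factor lies between two positive constants depending on $\kappa/\kappa'$, $d$, $\underline s$, $\overline s$. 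For the second factor, set $f_\nu(z):=z^\nu K_\nu(z)$. This function is continuous and positive on $(0,\infty)$, tends to $2^{\nu-1}\Gamma(\nu)>0$ as $z\to0^+$, and is decreasing (since $(z^\nu K_\nu)'=-z^\nu K_{\nu-1}$). It is also jointly continuous in $(\nu,z)$, using the integral representation $K_\nu(z)=\int_0^\infty e^{-z\cosh t}\cosh(\nu t)\,dt$. Hence it extends to a continuous positive function on the compact set $[\tfrac d2+\underline s,\tfrac d2+\overline s]\times[0,\max(\kappa,\kappa')R]$. There it is bounded above by some $M$ and below by some $m>0$. Therefore the ratio $f_\nu(\kappa r)/f_\nu(\kappa' r)$ lies in $[m/M,M/m]$ for all $x,y\in\mathcal G$.

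\textbf{Conclusion.} The two steps give $c\,\gamma_{\kappa'}\le\gamma_\kappa\le C\,\gamma_{\kappa'}$ pointwise on $\mathcal G\times\mathcal G$. Integrating against $(v(x)-v(y))^2$ yields equivalence of the seminorms, with the value $+\infty$ allowed on both sides. Combining with the zeroth-order bound gives $C_1,C_2$ depending on $d,\underline s,\overline s,\kappa,\kappa'$ and $\operatorname{diam}\mathcal G$, which is controlled by the geometry of $\mathcal G$. Hence the two norms are simultaneously finite on $L_2(\mathcal G)$, and the spaces coincide with equivalent norms. Continuity of $s$ is used only for measurability of $\beta$; no log-Hölder condition is needed.

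\textbf{Main obstacle.} The only delicate point is the uniform positive lower bound for $z^\nu K_\nu(z)$ over the compact parameter range. The large-$z$ exponential decay of $K_\nu$ is harmless because $\mathcal G$ is bounded. If it were not, the ratio would behave like $e^{-(\kappa-\kappa')r}$ and equivalence could fail. So the step to verify carefully is the joint continuity and the $z\to0$ limit uniformly in $\nu$, which follows from the integral representation via dominated convergence.
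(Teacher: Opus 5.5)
Your argument is correct, and it takes a more streamlined route than the paper. The paper splits $r=|x-y|$ into three regimes, using $r_{\min}=\min\{z_0/\kappa,z_0/\kappa'\}$ and $r_{\max}=\max\{z_0/\kappa,z_0/\kappa'\}$:
\begin{itemize}
\item for $r\le r_{\min}$ it uses the uniform small-argument bounds of \sref{lem:Bessel_uniform}(i);
\item for $r\ge r_{\max}$ it uses the large-argument bounds of \sref{lem:Bessel_uniform}(ii), whose two-sided form holds only on bounded ranges, so boundedness of $\mathcal G$ enters here;
\item in the mixed band it uses continuity of the ratio on a compact set.
\end{itemize}
You instead absorb the singular power into $f_\nu(z)=z^\nu K_\nu(z)$. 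The kernel ratio then becomes a single continuous positive function on a compact parameter set, and one compactness argument covers all regimes at once.

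The paper's version gives explicit asymptotic information in each regime. Near the diagonal the ratio is $\approx1$, and in the far field it behaves like $(\kappa/\kappa')^{\nu-1/2}e^{-(\kappa-\kappa')r}$; the latter is the mechanism behind \sref{rem:V_kappa_equiv}. Your version buys brevity, avoids the regime bookkeeping, and needs only measurability of $s$. It also correctly identifies $\operatorname{diam}\mathcal G$ as the geometric quantity controlling the constants. The paper instead writes the measure $|\mathcal G|$, both in the statement and in ``$r\in(0,|\mathcal G|]$''.

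Two small corrections. First, the factor $(\kappa/\kappa')^{2\nu}$ in your ratio identity is spurious. We have $\gamma_\kappa(x,y)=\tfrac12 P_0(\beta)\,f_\nu(\kappa r)\,r^{-(d+2\beta)}$ with the $\kappa$-independent prefactor $P_0(\beta)=2^{1-d/2+\beta}/(\pi^{d/2}|\Gamma(-\beta)|)$. So the ratio is exactly $f_\nu(\kappa r)/f_\nu(\kappa' r)$, which tends to $1$ as $r\to0$. Since the extra factor is bounded above and below on the compact $\nu$-range, your conclusion is unaffected.

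Second, the representation $K_\nu(z)=\int_0^\infty e^{-z\cosh t}\cosh(\nu t)\,dt$ does not directly give the $z\to0$ limit of $z^\nu K_\nu(z)$ by dominated convergence, because the integral diverges at $z=0$. You do not actually need the extension to $z=0$. By the monotonicity you already noted, for $0<z\le Z:=\max(\kappa,\kappa')\operatorname{diam}\mathcal G$,
\[
\min_\nu f_\nu(Z)\le f_\nu(z)\le\max_\nu 2^{\nu-1}\Gamma(\nu).
\]
This requires only continuity in $\nu$ at the fixed point $z=Z>0$. Alternatively, the identity $z^\nu K_\nu(z)=2^{\nu-1}\int_0^\infty e^{-u-z^2/(4u)}u^{\nu-1}\,du$ gives joint continuity up to $z=0$ directly.
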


\begin{remark}
\label{rem:V_kappa_equiv}
The constants in the comparability bounds of \sref{thm:V_kappa_equiv} may deteriorate (possibly blow up) as 
$\underline s\downarrow0$ or $\overline s\uparrow1$, or as $|\mathcal G|\to\infty$. The boundedness of $\mathcal G$ is essential; on unbounded domains the far-field exponential factors $e^{-\kappa r}$ and $e^{-\kappa' r}$ that appear in the proof are not uniformly comparable and equivalence may fail. If $\kappa$ and $\kappa'$ vary in space the same conclusion holds provided they are uniformly comparable,
i.e. $0<m_0\le (\kappa(x)/\kappa'(x))^{s(x)}\le M_0<\infty$ for all $x\in\mathcal G$.
\added[id=rev]{The role of the bounded truncated domain in the analysis is discussed further in \sref{rem:truncation_motivation}.}
\end{remark}

Next we define a bilinear form $\mathcal{A}^{s(\cdot)}(u,v)$ \replaced[id=rev]{which is proved to be}{that can be easily shown to be} symmetric, continuous and coercive (see the proof of \sref{thm:form_rep}). Let $v\in \mathbb V_{\kappa, s}$ be a test function, we have that
\begin{align*}
        \int_{\mathcal{G}} \tilde{A}_{s(\cdot)} u(x) v(x) dx & = \int_{\mathcal{D}} \kappa^{2s(x)} u(x) v(x) dx +2\int_{\mathcal{G}}\int_{\mathcal{G}}(u(x)-u(y))\gamma(x,y) v(x) dy dx
    \end{align*}
and by switching the variables in the double integral on the right hand side of the above equation and noting that $\gamma(x,y)$ is symmetric
\begin{align*}
        \int_{\mathcal{G}} \tilde{A}_{s(\cdot)} u(x) v(x) dx & = \int_{\mathcal{D}} \kappa^{2s(x)} u(x) v(x) dx -2\int_{\mathcal{G}}\int_{\mathcal{G}}(u(x)-u(y))\gamma(x,y) v(y) dx dy.
    \end{align*}
Summing these two equations and dividing by 2, we obtain
\begin{align*}
\int_{\mathcal{G}} \tilde{A}_{s(\cdot)} u(x) v(x) \, dx 
&= \int_{\mathcal{D}} \kappa^{2s(x)} u(x) v(x) \, dx + \int_{\mathcal{G}} \int_{\mathcal{G}} 
  (u(x) - u(y))(v(x) - v(y)) \gamma(x,y) \, dy \, dx 
\nonumber \\
&=: \mathcal{A}^{s(\cdot)}(u,v), \quad\qquad (u,v)\in \mathbb V_{\kappa,s} \times \mathbb V_{\kappa,s}
\end{align*}

The weak formulation is to find $u\in\mathbb V_{\kappa, s}$ satisfying
\begin{align}
\label{eq:weak}
    \mathcal A^{s(\cdot)}(u,v) = \frac{1}{\mu}\, \mathcal{W}(v) \quad \forall v\in \mathbb V_{\kappa, s}.
\end{align}

\begin{theorem}[Form representation]\label{thm:form_rep}
Let $\mathcal A^{s(\cdot)}$ and $\mathbb V_{\kappa, s}$ be as above. Then there exists a unique positive self-adjoint operator $A$ on $L_2(\mathcal G)$ with domain $\operatorname{Dom}(A)\subset L_2(\mathcal G)$ such that
\begin{enumerate}[(i)]
\item $\operatorname{Dom}(A^{1/2})=\mathbb V_{\kappa, s}$, $\langle A^{1/2}u,A^{1/2}v\rangle_{L_2(\mathcal G)}=(u,v)_{\mathbb V_{\kappa, s}}$ and $\|A^{1/2}u\|_{L_2(\mathcal G)}=\|u\|_{\mathbb V_{\kappa, s}}$ $\forall u,v\in\mathbb V_{\kappa, s}$.
\item For every deterministic $f\in L_2(\mathcal G)$ the variational problem
\begin{align*}
\mathcal A^{s(\cdot)}(u,v)=\langle f,v\rangle_{L_2(\mathcal G)}\qquad\forall v\in\mathbb V_{\kappa, s}
\end{align*}
has a unique solution $u\in\mathbb V_{\kappa, s}$, namely $u=A^{-1}f$. More generally, for every continuous linear functional $F\in\mathbb V_{\kappa, s}'$ there exists a unique $u\in\mathbb V_{\kappa, s}$ such that $
\mathcal A^{s(\cdot)}(u,v)=\langle F,v\rangle_{\mathbb V_{\kappa, s}',\mathbb V_{\kappa, s}}$, $\forall v\in\mathbb V_{\kappa, s}$,
and this solution is given by $u=A^{-1}F$ (here $A$ is understood as the isomorphism $A:\mathbb V_{\kappa, s}\to\mathbb V_{\kappa, s}'$ induced by the form).
\label{eq:gen_sol_def}
\end{enumerate}
\end{theorem}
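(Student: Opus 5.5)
The plan is to apply Kato's first representation theorem (the Friedrichs construction) to the closed, densely defined, symmetric form $\mathcal A^{s(\cdot)}$ on $L_2(\mathcal G)$, and then use Lax--Milgram/Riesz on the Hilbert space $\mathbb V_{\kappa,s}$.

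\emph{Step 1: algebraic identity.} First observe that $\mathcal A^{s(\cdot)}(u,v)=(u,v)_{\mathbb V_{\kappa,s}}$ for $u,v\in\mathbb V_{\kappa,s}$. Since elements of $\mathbb V_{\kappa,s}$ vanish a.e. on $\mathcal D^c_t$, the zero-order terms $\int_{\mathcal D}\kappa^{2s}uv$ and $\int_{\mathcal G}\kappa^{2s}uv$ coincide. Hence the form is symmetric. Continuity follows from Cauchy--Schwarz with constant $1$, and coercivity holds with constant $1$ in the energy norm.

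\emph{Step 2: closedness and the $L_2$ lower bound.} Since $\kappa>0$ and $0<\underline s\le s\le\overline s<1$, we have $\kappa^{2s(x)}\ge m:=\min(\kappa^{2\underline s},\kappa^{2\overline s})>0$. Therefore $\mathcal A^{s(\cdot)}(u,u)\ge m\|u\|_{L_2(\mathcal G)}^2$, so the form is bounded below by a positive constant. By \sref{thm:V_complete}, $\mathbb V_{\kappa,s}$ is complete, and the energy norm dominates the $L_2$ norm. This means the form norm $\mathcal A^{s(\cdot)}(u,u)+\|u\|_{L_2}^2$ is equivalent to $\|u\|_{\mathbb V_{\kappa,s}}^2$, and so the form is closed.

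\emph{Step 3: density, the main obstacle.} Kato's theorem needs $\mathbb V_{\kappa,s}$ to be dense in $L_2(\mathcal G)$. This fails literally, because every element vanishes on $\mathcal D^c_t$. I would treat this in one of two ways. The first is to regard the form on the closed subspace $H_0:=\{v\in L_2(\mathcal G):v=0 \text{ a.e. on }\mathcal D^c_t\}\cong L_2(\mathcal D)$, which is the natural reading of ``on $L_2(\mathcal G)$''. The second is to extend $A$ by the identity (or by $0$) on the orthogonal complement. For density in $H_0$, note that $C_c^\infty(\mathcal D)\subset\mathbb V_{\kappa,s}$. For such $\phi$, the Gagliardo-type seminorm is finite: near the diagonal, $\gamma(x,y)\lesssim |x-y|^{-d-2\overline s}$ for $|x-y|\le1$, using the small-argument asymptotics $K_\nu(r)\sim \tfrac12\Gamma(\nu)(r/2)^{-\nu}$ and the boundedness of $1/|\Gamma(-\beta)|$ on $[\underline s,\overline s]$. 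Off the diagonal, $\gamma$ is bounded. Then $\iint(\phi(x)-\phi(y))^2|x-y|^{-d-2\overline s}<\infty$ because $\phi$ is Lipschitz and $\overline s<1$. Since $C_c^\infty(\mathcal D)$ is dense in $L_2(\mathcal D)$, the form domain is dense in $H_0$. \sref{prop:density_moll} is not needed here; a direct check suffices.

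\emph{Step 4: conclusions.} Kato's representation theorem now gives a unique positive self-adjoint $A$ with $\operatorname{Dom}(A)\subset\mathbb V_{\kappa,s}$ and $\mathcal A^{s(\cdot)}(u,v)=\langle Au,v\rangle_{L_2}$. Kato's second representation theorem gives $\operatorname{Dom}(A^{1/2})=\mathbb V_{\kappa,s}$ and $\langle A^{1/2}u,A^{1/2}v\rangle=\mathcal A^{s(\cdot)}(u,v)=(u,v)_{\mathbb V_{\kappa,s}}$, which is (i). Because $A\ge m>0$, $A$ is boundedly invertible.

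For (ii), the Riesz representation theorem in the Hilbert space $(\mathbb V_{\kappa,s},(\cdot,\cdot)_{\mathbb V_{\kappa,s}})$ gives, for any $F\in\mathbb V_{\kappa,s}'$, a unique $u$ with $(u,v)_{\mathbb V_{\kappa,s}}=\langle F,v\rangle$ for all $v\in\mathbb V_{\kappa,s}$. This defines the isomorphism $A:\mathbb V_{\kappa,s}\to\mathbb V_{\kappa,s}'$, and it extends the operator above. For $f\in L_2$, the functional $v\mapsto\langle f,v\rangle$ is bounded by $m^{-1/2}\|f\|\|v\|_{\mathbb V_{\kappa,s}}$. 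The solution $u$ then satisfies $\langle Au,v\rangle=\langle f,v\rangle$ on a dense set, so $u\in\operatorname{Dom}(A)$ and $u=A^{-1}f$. Uniqueness follows from coercivity.
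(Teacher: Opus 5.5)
Your argument follows the paper's proof. Closedness comes from \sref{thm:V_complete} together with equivalence of the form norm and $\|\cdot\|_{\mathbb V_{\kappa,s}}$, and coercivity holds with constant $\min(\kappa^{2\underline s},\kappa^{2\overline s})$. Kato's representation theorems give (i), and the Riesz map of $(\mathbb V_{\kappa,s},(\cdot,\cdot)_{\mathbb V_{\kappa,s}})$ gives (ii).

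The real difference is your Step~3, and there you are more careful than the paper, which applies the representation theorem on $L_2(\mathcal G)$ without checking that the form domain is dense. As you note, density fails. The $L_2$-closure of $\mathbb V_{\kappa,s}$ is $H_0\cong L_2(\mathcal D)$, a proper subspace because $\mathcal D^c_t$ has positive measure. Since $\operatorname{Dom}(A^{1/2})\supseteq\operatorname{Dom}(A)$ is dense for any self-adjoint $A$, no self-adjoint operator on all of $L_2(\mathcal G)$ can satisfy (i). So working in $H_0$ is not one option among two but the only reading under which the statement holds; commit to it.

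Discard the extension alternative. Either extension has $\operatorname{Dom}(A^{1/2})=\mathbb V_{\kappa,s}\oplus L_2(\mathcal D^c_t)\neq\mathbb V_{\kappa,s}$. Extension by $0$ is also not invertible. Extension by the identity gives $A^{-1}f\notin\mathbb V_{\kappa,s}$ whenever $f\not\equiv0$ on $\mathcal D^c_t$.

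For the same reason, (ii) should read $u=A^{-1}Pf$, with $P$ the orthogonal projection onto $H_0$ (restriction to $\mathcal D$). For $v\in\mathbb V_{\kappa,s}$ one has $\langle f,v\rangle_{L_2(\mathcal G)}=\langle Pf,v\rangle_{L_2(\mathcal G)}$. Your ``dense set'' step invokes part (iii) of Kato's first representation theorem, which needs the representing element to lie in the ambient space $H_0$, so it yields $Au=Pf$. Equivalently, the map $L_2(\mathcal G)\to\mathbb V_{\kappa,s}'$ that the paper calls a canonical embedding has kernel $L_2(\mathcal D^c_t)$.

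Two smaller points are in your favour. First, $\operatorname{Dom}(A^{1/2})=\mathbb V_{\kappa,s}$ is the content of the second representation theorem, not the first as the paper says. Second, your near-diagonal majorant $|x-y|^{-d-2\overline s}$ is the right one. The proof of \sref{prop:meas_integrable} instead bounds $|x-y|^{-(d+2\beta)}$ by $|x-y|^{-(d+2\underline s)}$, which goes the wrong way for $|x-y|<1$; this is harmless, since $\overline s<1$ still gives convergence.
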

The stochastic case, where the right-hand side is the white noise $\mathcal W$, is treated in \sref{prop:gen_weak_solution_rhs=W}. We defer this result because its proof requires embedding and regularity properties of the energy space (most importantly the continuous embedding $\mathbb V_{\kappa,s} \hookrightarrow L_2(\mathcal G)$) which will be established later.

We next compare the nonlocal energy form $\mathcal A^{s(\cdot)}(\cdot,\cdot)$ with standard fractional Sobolev forms associated with fractional Laplacian with constant fractional order. Using the near-/far-field bounds for the kernel from \sref{lem:gamma_two_regime} in the appendix, we obtain two-sided form inequalities which imply a continuous embedding $\mathbb V_{\kappa, s}\hookrightarrow H^{\overline s}(\mathcal G)$.

\begin{prop}\label{prop:form_compare_compact}
Under the standing assumptions (in particular $s\in C(\overline{\mathcal G})$, $0<\underline s\le s\le\overline s<1$, and $\kappa>0$), 
there exist constants $c_1',c_2'>0$ such that \replaced[id=rev]{for all $v\in L_2(\mathcal G)$,}{for all $v\in C_c^\infty(\mathcal G)$}
\begin{align}\label{eq:form_two_sided}
c_1' \mathcal B_-(v,v) \;\le\; \mathcal A^{s(\cdot)}(v,v) \;\le\; c_2' \mathcal B_+(v,v),
\end{align}
where $\mathcal B_-(v,v):=|v|_{H^{\underline s}(\mathcal G)}^2 + \|v\|_{L_2(\mathcal G)}^2$, 
$\mathcal B_+(v,v):=|v|_{H^{\overline s}(\mathcal G)}^2 + \|v\|_{L_2(\mathcal G)}^2$,
and the fractional seminorms
\begin{align*}
|v|_{H^{\underline s}(\mathcal G)}^2
= \iint_{\mathcal G\times\mathcal G} \frac{(v(x)-v(y))^2}{r^{d+2\underline s}}\,dy\,dx, \quad |v|_{H^{\overline s}(\mathcal G)}^2
= \iint_{\mathcal G\times\mathcal G} \frac{(v(x)-v(y))^2}{r^{d+2\overline s}}\,dy\,dx.
\end{align*}
In particular the left inequality in \eqref{eq:form_two_sided} yields the continuous embedding $
\mathbb V_{\kappa, s} \hookrightarrow H^{\underline s}(\mathcal G)$,
hence (by Rellich--Kondrachov) the embedding $\mathbb V_{\kappa, s}\hookrightarrow L_2(\mathcal G)$ is compact. 
\end{prop}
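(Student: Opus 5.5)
The plan is to compare the kernel $\gamma(x,y)$ pointwise with the constant-order kernels $r^{-d-2\underline s}$ and $r^{-d-2\overline s}$, where $r=|x-y|$, using the two-regime bounds of \sref{lem:gamma_two_regime}. Since $\mathcal G$ is bounded, $r\le R:=\operatorname{diam}\mathcal G$ on $\mathcal G\times\mathcal G$. I would expect that lemma to say the following. For $r\le 1$, say, $\gamma(x,y)\asymp r^{-d-2\beta(x,y)}$, because $z^\nu K_\nu(z)\to 2^{\nu-1}\Gamma(\nu)$ as $z\to0$ and the prefactor $2^{1-d/2+\beta}\kappa^{d/2+\beta}/|\Gamma(-\beta)|$ is bounded above and below uniformly for $\beta\in[\underline s,\overline s]\subset(0,1)$. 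For $1\le r\le R$, $\gamma$ is bounded above and below by positive constants depending on $R,\kappa,\underline s,\overline s$, since $K_\nu$ is positive and continuous, with exponential decay $e^{-\kappa r}$ that is harmless on a bounded range. Continuity of $(\nu,z)\mapsto K_\nu(z)$ on compacts gives the uniformity in $\beta$.

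\emph{Upper bound.} For $r\le1$, the fact that $\beta\le\overline s$ gives $r^{-d-2\beta}\le r^{-d-2\overline s}$. For $1\le r\le R$, $\gamma\le C\le C R^{d+2\overline s} r^{-d-2\overline s}$. Hence $\gamma\le c\, r^{-d-2\overline s}$ on $\mathcal G\times\mathcal G$, and the double-integral part of $\mathcal A^{s(\cdot)}(v,v)$ is bounded by $c|v|^2_{H^{\overline s}(\mathcal G)}$. The zero-order term satisfies $\int_{\mathcal D}\kappa^{2s}v^2\le\max(\kappa^{2\underline s},\kappa^{2\overline s})\|v\|^2_{L_2}$. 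Together these give $\mathcal A^{s(\cdot)}(v,v)\le c_2'\mathcal B_+(v,v)$, where both sides may be $+\infty$.

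\emph{Lower bound.} Symmetrically, for $r\le1$ we have $r^{-d-2\beta}\ge r^{-d-2\underline s}$, and for $1\le r\le R$, $\gamma\ge c>0\ge c\,r^{-d-2\underline s}$ up to the constant $R^{-d-2\underline s}$-type adjustment. Thus $\gamma\ge c\,r^{-d-2\underline s}$, and so $|v|^2_{\mathbb V_{\kappa,s}}\ge c|v|^2_{H^{\underline s}(\mathcal G)}$.

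The $L_2$ part is the only delicate point, since the zero-order term in $\mathcal A^{s(\cdot)}$ integrates only over $\mathcal D$. For $v$ in the energy space this is immaterial, because $v=0$ on $\mathcal D^c_t$ and so $\int_{\mathcal D}\kappa^{2s}v^2\ge\min(\kappa^{2\underline s},\kappa^{2\overline s})\|v\|^2_{L_2(\mathcal G)}$. For general $v\in L_2(\mathcal G)$ I would interpret $\mathcal A^{s(\cdot)}(v,v)$ as the energy-norm expression, as in \sref{prop:density_moll}, with the zero-order term over $\mathcal G$; the same bound then holds.

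If one insists on the $\mathcal D$-only term, the plan is to recover $\|v\|_{L_2(\mathcal D^c_t)}$ from the nonlocal interaction. Using $\gamma\ge c_0>0$ on $\mathcal G\times\mathcal G$ (by boundedness) and $v(y)^2\le 2(v(x)-v(y))^2+2v(x)^2$, integrate over $x\in\mathcal D$, $y\in\mathcal D^c_t$ to get $|\mathcal D|\,\|v\|^2_{L_2(\mathcal D^c_t)}\le 2c_0^{-1}|v|^2_{\mathbb V}+2|\mathcal D^c_t|\,\|v\|^2_{L_2(\mathcal D)}$. This is a Poincaré-type step, and I expect it to be the main bookkeeping obstacle.

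Finally, the left inequality gives $\|v\|_{H^{\underline s}(\mathcal G)}\lesssim\|v\|_{\mathbb V_{\kappa,s}}$, i.e. the continuous embedding. Compactness of $H^{\underline s}(\mathcal G)\hookrightarrow L_2(\mathcal G)$ on a bounded Lipschitz domain, the fractional Rellich–Kondrachov theorem (e.g. via extension to $\mathbb R^d$ and Fréchet–Kolmogorov), then yields compactness of $\mathbb V_{\kappa,s}\hookrightarrow L_2(\mathcal G)$. Log-Hölder continuity of $s$ is not needed here; only the bounds $\underline s\le\beta\le\overline s$ enter.
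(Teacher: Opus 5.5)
Your proof is correct. It follows the paper's overall skeleton: kernel bounds from the Bessel asymptotics, pointwise comparison with the constant-order kernels $r^{-d-2\underline s}$ and $r^{-d-2\overline s}$, integration, and then Rellich--Kondrachov. It departs from the paper in two places.

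\textbf{Far-field treatment.} The paper splits at the radius $r_0$ of \sref{lem:gamma_two_regime} and uses only the upper bound $C_{\mathrm{tail}}$ away from the diagonal. For the left inequality it bounds the far part of $|v|_{H^{\underline s}(\mathcal G)}^2$ by $4r_0^{-(d+2\underline s)}|\mathcal G|\,\|v\|_{L_2(\mathcal G)}^2$ and absorbs this into the mass term. For the right inequality it bounds the far part of $|v|_{\mathbb V_{\kappa,s}}^2$ by an $L_2$ term. It never needs a positive lower bound on $\gamma$ off the diagonal. You instead use positivity and continuity of $K_\nu$ on the bounded range $r\le\operatorname{diam}\mathcal G$ to get $c\,r^{-d-2\underline s}\le\gamma\le C\,r^{-d-2\overline s}$ on all of $\mathcal G\times\mathcal G$. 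This gives the slightly stronger pure seminorm comparison $c\,|v|_{H^{\underline s}(\mathcal G)}^2\le|v|_{\mathbb V_{\kappa,s}}^2\le C\,|v|_{H^{\overline s}(\mathcal G)}^2$. The price is a lower constant tied to $\inf_{\mathcal G\times\mathcal G}\gamma$, which deteriorates like $e^{-\kappa\operatorname{diam}\mathcal G}$. The paper's constants depend on $|\mathcal G|$ only polynomially.

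\textbf{Mass term.} The paper's proof writes $|v|_{\mathbb V_{\kappa,s}}^2+\|\kappa^{s(\cdot)}v\|_{L_2(\mathcal G)}^2=\mathcal A^{s(\cdot)}(v,v)$. That identity holds only when $v=0$ on $\mathcal D^c_t$, or when the zero-order term is read over $\mathcal G$. So, as written, the paper establishes the left inequality on $\mathbb V_{\kappa,s}$ rather than for every $v\in L_2(\mathcal G)$ as the statement claims. Your nonlocal Poincar\'e step closes this. It uses $\inf_{\mathcal G\times\mathcal G}\gamma>0$, the interaction between $\mathcal D$ and $\mathcal D^c_t$ (which by symmetry appears twice in $|v|_{\mathbb V_{\kappa,s}}^2$), and $v(y)^2\le2(v(x)-v(y))^2+2v(x)^2$. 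Together these recover $\|v\|_{L_2(\mathcal D^c_t)}$ and make the left inequality hold literally with the $\mathcal D$-only mass term. For the embedding and compactness conclusions only $v\in\mathbb V_{\kappa,s}$ is relevant, and there the two routes coincide.
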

Consequently the operator $A$ associated with $\mathcal A^{s(\cdot)}$ has compact resolvent and a discrete spectrum consisting of eigenvalues of finite multiplicity with an orthonormal eigenbasis in $L_2(\mathcal G)$.

\begin{remark}
\label{rem:continuity_s}
Continuity of $s$ throughout the article is a convenient technical assumption but is not essential for the variational formulation or for the main existence/uniqueness arguments. With modest additional effort (approximation of $s$ or working with essential bounds and form convergence) all main results extend to measurable $s$ satisfying $0<\underline s\le s(x)\le\overline s<1$ a.e.. \added[id=rev]{The exceptions are the results whose proofs compare two distinct values of $s$, namely \sref{prop:density_moll} and \sref{prop:reg_variable}, which genuinely use the log-H\"older condition \eqref{eq:log_holder}.}
\end{remark}

In \sref{thm:form_rep} we proved the existence of a unique solution when the forcing is deterministic. Next we construct a generalized Gaussian weak solution when the forcing is the white noise $\mathcal W$. The meaning of the identity $u(\varphi)=\frac{1}{\mu}\,\mathcal W\big(A^{-1}\varphi\big)$ relies on the mapping properties of $A^{-1}$ together with the continuous embedding of $\mathbb V_{\kappa,s}$ into $L_2(\mathcal G)$, which guarantee that $A^{-1}\varphi$ belongs to the domain on which $\mathcal W$ acts and that the resulting random variables are well defined.
\begin{prop}[Stochastic RHS]\label{prop:gen_weak_solution_rhs=W}
Assume the hypotheses of \sref{thm:form_rep} and let $A$ be the positive self-adjoint operator associated with the coercive form $\mathcal A^{s(\cdot)}$ on $L^2(\mathcal G)$.  Define the map
\begin{align}
\label{eq:gen_weak_sol}
u : L^2(\mathcal G)\longrightarrow L^2(\Omega;\mathbb R),\qquad
u(\varphi) := \frac{1}{\mu}\,\mathcal W\big(A^{-1}\varphi\big).
\end{align}
Then $u$ is a centered Gaussian generalized random field, in the sense that for every $v\in\mathbb V_{\kappa,s}$ the two $L^2(\Omega;\mathbb R)$--random variables $\langle A u, v\rangle_{\mathbb V_{\kappa,s}',\mathbb V_{\kappa,s}}$, and $\frac{1}{\mu}\,\mathcal W(v)$ are equal. Equivalently, the map $\mathbb V_{\kappa,s}\ni v \longmapsto \langle A u, v\rangle_{\mathbb V_{\kappa,s}',\mathbb V_{\kappa,s}} \in L^2(\Omega;\mathbb R)$ is a well-defined continuous linear mapping and coincides with the map $\mathbb V_{\kappa,s}\ni v \longmapsto \frac{1}{\mu}\,\mathcal W(v)\in L^2(\Omega;\mathbb R).$
\end{prop}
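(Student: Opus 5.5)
The plan has three steps: check that $u$ is a well-defined centered Gaussian field, give meaning to $Au$ as a $\mathbb V_{\kappa,s}'$-valued object, and then verify $\langle Au,v\rangle = \frac1\mu\mathcal W(v)$ by a duality computation.

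\textbf{Well-definedness and Gaussianity.} By \sref{thm:form_rep}, $A^{-1}:L_2(\mathcal G)\to\mathbb V_{\kappa,s}$ is bounded, since $\|A^{-1}\varphi\|_{\mathbb V_{\kappa,s}}\le \|\varphi\|_{\mathbb V_{\kappa,s}'}\le C\|\varphi\|_{L_2(\mathcal G)}$. By \sref{prop:form_compare_compact}, $\mathbb V_{\kappa,s}\hookrightarrow L_2(\mathcal G)$ continuously. Hence $A^{-1}\varphi\in L_2(\mathcal G)$ with $\|A^{-1}\varphi\|_{L_2}\le C\|\varphi\|_{L_2}$. By \sref{def:LN_white}, $\varphi\mapsto u(\varphi)$ is therefore a bounded linear map into $L_2(\Omega;\mathbb R)$, with
\[
\mathbb E|u(\varphi)|^2=\mu^{-2}\|A^{-1}\varphi\|_{L_2}^2 .
\]
Finite families $\{u(\varphi_i)\}$ are jointly centered Gaussian because they are images of $\mathcal W$ on the family $\{A^{-1}\varphi_i\}$. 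So $u\in\mathcal L(L_2(\mathcal G),L_2(\Omega;\mathbb R))$ is a centered Gaussian generalized field.

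\textbf{Meaning of $Au$.} A generalized field $u$ acts on $L_2$ test functions, not pointwise, so $\langle Au,v\rangle$ must be defined by transposition. Since $A$ is self-adjoint, the natural definition is
\[
\langle Au,v\rangle_{\mathbb V_{\kappa,s}',\mathbb V_{\kappa,s}}:=u(Av)
\]
whenever $Av\in L_2$, that is, for $v\in\operatorname{Dom}(A)$. This gives $u(Av)=\frac1\mu\mathcal W(A^{-1}Av)=\frac1\mu\mathcal W(v)$ immediately on $\operatorname{Dom}(A)$.

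The main obstacle is extending from $\operatorname{Dom}(A)$ to all of $\mathbb V_{\kappa,s}$. I would use that $\operatorname{Dom}(A)$ is dense in $\operatorname{Dom}(A^{1/2})=\mathbb V_{\kappa,s}$ in the energy norm; this is a standard fact for form operators and follows from the eigenbasis given after \sref{prop:form_compare_compact}. On $\operatorname{Dom}(A)$, the map $v\mapsto \langle Au,v\rangle$ equals $v\mapsto\frac1\mu\mathcal W(v)$, and
\[
\mathbb E\Big|\tfrac1\mu\mathcal W(v)\Big|^2=\mu^{-2}\|v\|_{L_2}^2\le C\mu^{-2}\|v\|_{\mathbb V_{\kappa,s}}^2
\]
by the continuous embedding. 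The map is thus continuous from $(\operatorname{Dom}(A),\|\cdot\|_{\mathbb V_{\kappa,s}})$ to $L_2(\Omega)$. It therefore extends uniquely by continuity to a continuous linear map on $\mathbb V_{\kappa,s}$, and the extension coincides with $v\mapsto\frac1\mu\mathcal W(v)$, since the latter is itself continuous on $\mathbb V_{\kappa,s}$. This proves both the well-definedness and the equality claimed in \sref{prop:gen_weak_solution_rhs=W}.

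\textbf{Spectral check of the identity.} As a consistency check, I would rewrite the identity in the eigenbasis $\{e_j,\lambda_j\}$: $u(e_j)=\frac{1}{\mu\lambda_j}\mathcal W(e_j)$ and $\langle Au,e_j\rangle=\lambda_j u(e_j)=\frac1\mu\mathcal W(e_j)$. The general case then follows by linearity and the density and continuity argument above.
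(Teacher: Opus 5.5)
Your proposal is correct and follows essentially the same route as the paper. Both define $\langle Au,v\rangle$ as $u(Av)=\tfrac1\mu\mathcal W(v)$ for $v\in\operatorname{Dom}(A)$ and then pass to general $v\in\mathbb V_{\kappa,s}$ by approximating in the energy norm, using continuity of $\mathcal W$ on $L_2(\mathcal G)$ together with the embedding $\mathbb V_{\kappa,s}\hookrightarrow L_2(\mathcal G)$. The differences are cosmetic: the paper approximates with the resolvents $(I+\tfrac1n A)^{-1}v$ instead of spectral truncations, and your joint-Gaussian argument for $u(\varphi_i)=\tfrac1\mu\mathcal W(A^{-1}\varphi_i)$ is more direct than the paper's limiting argument.
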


Let $(\lambda_j)_{j\ge1}$ denote the eigenvalues of $A$ in nondecreasing order, counted with multiplicity. \replaced[id=rev]{Then, by}{Then one can easily show, e.g. by} standard min-max (Courant--Fischer) inequalities (see e.g. \cite[Thm. XIII.1]{Reed1978}), the corresponding eigenvalues satisfy, for all $j$, $
c_1 \mu_j^- \le \lambda_j \le c_2 \mu_j^+$,
where $(\mu_j^\pm)$ are eigenvalues for the self-adjoint operators $B_{\pm}$ associated to $\mathcal B_\pm$. Weyl asymptotics for constant-order fractional Laplacians on bounded Lipschitz domains \cite[Theorem 5.3 and Example 5.4]{Grubb2023} (see also \cite{Grubb2015}) yield $\mu_j^- \simeq j^{2\underline s/d}$, and $\mu_j^+ \simeq j^{2\overline s/d}$,
meaning there exist positive constants $C_1,C_2, C_1', C_2'$ and $j_0$ such that for $j\ge j_0$,
\begin{align*}
C_1 j^{2\underline s/d} \le \mu_j^- \le C_2 j^{2\underline s/d},\quad
C_1' j^{2\overline s/d} \le \mu_j^+ \le C_2' j^{2\overline s/d},
\end{align*}
which implies that there exist constants $C_-,C_+>0$ such that for all $j\ge j_0$,
\begin{align}
\label{eq:lambda_growth}
C_- j^{2\underline s/d} \le \lambda_j \le C_+ j^{2\overline s/d}.
\end{align}

This last inequality (in particular the left inequality) will help us prove that the weak solution $u\in H^r(\mathcal G)$ almost surely for all nonnegative reals $r$ with $r \;<\; 2\underline s - \frac{d}{2}$ as in the below theorem.

\begin{theorem}[Spectral/generalized Gaussian solution and Sobolev regularity]\label{thm:spectral_underline_s}
Let $\mathcal A^{s(\cdot)}$ be the closed, symmetric, coercive form on $L_2(\mathcal G)$ with form domain
$\mathbb V_{\kappa, s}=\operatorname{Dom}(\mathcal A^{s(\cdot)})=\operatorname{Dom}(A^{1/2})$ with $A$ is the associated positive self-adjoint operator from \sref{thm:form_rep}.
Denote $A$'s eigenpairs by $0<\lambda_1 \le\lambda_2\le\cdots$, $ A\phi_j= \lambda_j\phi_j$, where $(\phi_j)_{j\ge1}$ is an orthonormal basis of $L_2(\mathcal G)$ with respect to the inner product $\langle\cdot,\cdot\rangle_{L_2(\mathcal G)}$. Let $\mathcal W$ be Gaussian white noise on $\mathcal G$ (as in \sref{def:LN_white}) and set $\xi_j:=\mathcal W(\phi_j)$ so that $\xi_j\stackrel{i.i.d.}{\sim}N(0,1)$ as real random variables in $L_2(\Omega; \mathbb R)$. Then the following hold.

\begin{enumerate}[(i)]
\item (Spectral generalized weak solution.) The series $ u \;:=\; \frac{1}{\mu}\sum_{j=1}^\infty \xi_j\,\lambda_j^{-1}\,\phi_j$
defines a centered Gaussian generalized random field, in the sense that for every $\varphi\in L_2(\mathcal G)$ the scalar series (defined as the action of $u$ on $\varphi \in L_2(\mathcal G)$) $
u(\varphi) \;:=\; \frac{1}{\mu}\sum_{j=1}^\infty \xi_j \lambda_j^{-1}\,\langle\phi_j,\varphi\rangle_{L_2(\mathcal G)}$ converges in $L_2(\Omega; \mathbb R)$ and the map $\varphi\mapsto u(\varphi)$ is linear and continuous from $L_2(\mathcal G)$ into $L_2(\Omega; \mathbb R)$.

\item (Sobolev regularity.) \replaced[id=rev]{Set $r_\star := 2\underline s - d/2$ if $d\ge2$, or if $d=1$ and $\underline s\le 1/2$. If $d=1$ and $\underline s>1/2$, assume in addition that $\mathcal D$ and $\mathcal G$ are $C^\infty$ domains and set
\begin{equation}\label{eq:r_star}
r_\star := \underline s+(r_g-\underline s)\Big(1-\frac{1}{2\underline s}\Big),\qquad r_g:=\min\big(2\underline s,\,\underline s+\tfrac12\big).
\end{equation}
Then for every real $r<r_\star$, one has $\mathbb E\big[\|u\|_{H^r(\mathcal G)}^2\big]<\infty$, hence the spectral generalized weak solution $u\in H^r(\mathcal G)$ almost surely. In particular, $u\in L_2(\mathcal G)$ a.s.\ whenever $\underline s>d/4$.}{For every real $r$ with $r \;<\; 2\underline s - d/2$, one has $\mathbb E\big[\|u\|_{H^r(\mathcal G)}^2\big]<\infty$, hence the spectral generalized weak solution $u\in H^r(\mathcal G)$ almost surely. In particular, $u\in L_2(\mathcal G)$ a.s.\ whenever $\underline s>d/4$.}

\item (Equation in the generalized sense and uniqueness.) Assume $\underline s>d/4$. Then, the weak solution $u$ satisfies $
A u \;=\; (1/\mu)\mathcal W$
in the sense of generalized random fields, meaning that \replaced[id=rev]{for every $\varphi\in \operatorname{Dom}(A)$ the identity
$u(A\varphi) \;=\; (1/\mu)\,\mathcal W(\varphi)$}{for every $\varphi\in L_2(\mathcal G)$ the identity $
\langle A u,\varphi\rangle_{L_2(\mathcal G)} \;=\; (1/\mu)\,\mathcal W(\varphi)$}
holds in $L_2(\Omega; \mathbb R)$\added[id=rev]{, where $u(\cdot)$ denotes the action of the generalized field $u$ from (i); note that $Au$ itself need not be a function, so the equation is transferred to the test function through the self-adjointness of $A$}. Moreover, $u$ is the unique centered Gaussian generalized solution of this equation.

\end{enumerate}
\end{theorem}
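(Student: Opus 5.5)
The plan is to handle the three parts separately. Everything rests on the eigenvalue lower bound \eqref{eq:lambda_growth} and on identifying the domains of fractional powers of $A$ with Sobolev spaces through \sref{prop:form_compare_compact}.

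\textbf{Part (i).} For $\varphi\in L_2(\mathcal G)$ the summands $\xi_j\lambda_j^{-1}\langle\phi_j,\varphi\rangle$ are independent, centered and Gaussian. The partial sums are therefore Cauchy in $L_2(\Omega;\mathbb R)$, because
\[
\mathbb E\Big|\sum_{j=m}^n \xi_j\lambda_j^{-1}\langle\phi_j,\varphi\rangle\Big|^2=\sum_{j=m}^n\lambda_j^{-2}\langle\phi_j,\varphi\rangle^2\le\lambda_1^{-2}\sum_{j\ge m}\langle\phi_j,\varphi\rangle^2 .
\]
This also yields $\mathbb E|u(\varphi)|^2\le\mu^{-2}\lambda_1^{-2}\|\varphi\|^2$, which gives linearity and continuity. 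Joint Gaussianity passes to the $L_2$ limit. We note that $u(\varphi)=\mu^{-1}\mathcal W(A^{-1}\varphi)$, since $A^{-1}\varphi=\sum_j\lambda_j^{-1}\langle\phi_j,\varphi\rangle\phi_j$ and $\mathcal W$ is an $L_2$-isometry. Hence this field coincides with the one in \sref{prop:gen_weak_solution_rhs=W}.

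\textbf{Part (ii).} This is the main step. The approach is to prove $\mathrm{Dom}(A^{\theta})\hookrightarrow H^{2\theta\underline s}(\mathcal G)$ for suitable $\theta\in[0,1]$.
\begin{enumerate}[(a)]
\item For $\theta=1/2$ this is the left inequality of \sref{prop:form_compare_compact}, giving $\mathrm{Dom}(A^{1/2})=\mathbb V_{\kappa,s}\hookrightarrow H^{\underline s}$.
\item For $\theta\le1/2$ we interpolate between $L_2$ and $\mathbb V_{\kappa,s}$, using $[L_2,\mathrm{Dom}(A^{1/2})]_{2\theta}=\mathrm{Dom}(A^\theta)$ and $[L_2,H^{\underline s}]_{2\theta}=H^{2\theta\underline s}$ on Lipschitz $\mathcal G$.
\item For $\theta\in(1/2,1]$ we need a regularity estimate $\mathrm{Dom}(A)\hookrightarrow H^{r_g}$. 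For $d\ge2$ or $\underline s\le1/2$ I would aim for $r_g=2\underline s$, proved by difference quotients in the variable-order form using the log-H\"older condition \eqref{eq:log_holder}, as in \sref{prop:reg_variable}. In the remaining case $d=1$, $\underline s>1/2$, the censored-type operator is known to saturate at $\underline s+1/2$ near $\partial\mathcal D$ (cf.\ \cite{Grubb2015}), which explains $r_g=\min(2\underline s,\underline s+1/2)$ and the $C^\infty$ hypothesis.
\end{enumerate}
Interpolating between $H^{\underline s}$ at $\theta=1/2$ and $H^{r_g}$ at $\theta=1$ then gives $\mathrm{Dom}(A^\theta)\hookrightarrow H^{\underline s+(2\theta-1)(r_g-\underline s)}$.

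With the embedding in hand, we fix $r$ and choose $\theta$ with $\mathrm{Dom}(A^\theta)\hookrightarrow H^r$. Then
\[
\mathbb E\|u\|_{H^r}^2\lesssim\mathbb E\|A^\theta u\|_{L_2}^2=\mu^{-2}\sum_j\lambda_j^{2\theta-2}.
\]
By \eqref{eq:lambda_growth} this sum is finite iff $2(1-\theta)\cdot2\underline s/d>1$, i.e.\ $\theta<1-d/(4\underline s)$.
\begin{itemize}
\item When $2\theta\underline s$ is available, the admissible $r$ are exactly those with $r<2\underline s-d/2$.
\item In the case $d=1$, $\underline s>1/2$, the threshold $\theta=1-1/(4\underline s)>1/2$ gives $2\theta-1=1-1/(2\underline s)$, which reproduces $r_\star$ in \eqref{eq:r_star}.
\item For negative $r$, which occurs when $\underline s\le d/4$, we use duality: $H^{-r}_0\hookrightarrow \mathrm{Dom}(A^{-r/(2\underline s)})'$ via the same embeddings.
\end{itemize}
Finally, $L_2$ membership for $\underline s>d/4$ is the case $r=0$. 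The main obstacle is step (c), the higher regularity $\mathrm{Dom}(A)\subset H^{r_g}$ for a variable-order kernel. If necessary I would restrict to $\theta\le1/2$ with (b), which already suffices whenever $2\underline s-d/2\le\underline s$, i.e.\ $\underline s\le d/2$; this covers every case except $d=1$, $\underline s>1/2$.

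\textbf{Part (iii).} For $\varphi\in\mathrm{Dom}(A)$ we have $u(A\varphi)=\mu^{-1}\mathcal W(A^{-1}A\varphi)=\mu^{-1}\mathcal W(\varphi)$ by (i). For uniqueness, suppose $\tilde u$ is another centered Gaussian generalized solution. Then $w=u-\tilde u$ satisfies $w(A\varphi)=0$ for all $\varphi\in\mathrm{Dom}(A)$. Since $0\notin\sigma(A)$, the range $A(\mathrm{Dom}(A))$ is all of $L_2(\mathcal G)$, and continuity of $w$ then forces $w\equiv0$. The law is therefore determined, and in fact the field is determined pathwise in $L_2(\Omega)$.
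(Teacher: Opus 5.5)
Apart from one bullet, your proof is the paper's proof in different packaging.

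\textbf{Part (ii), comparison.} The paper works eigenfunction by eigenfunction and then sums $\lambda_j^{-2}\|\phi_j\|_{H^r}^2$:
\begin{itemize}
\item For $r\le\underline s$, interpolation between $L_2$ and $H^{\underline s}$, with the lower bound in \eqref{eq:form_two_sided}, gives $\|\phi_j\|_{H^r}^2\lesssim\lambda_j^{r/\underline s}$.
\item For $d=1$, $\underline s>\tfrac12$, it writes $\phi_j=\lambda_jA^{-1}\phi_j$ and uses \sref{prop:reg_variable}(ii) to get $\|\phi_j\|_{H^\sigma}\lesssim\lambda_j$ for $\sigma<r_g$. It then interpolates between $H^{\underline s}$ and $H^\sigma$.
\end{itemize}
Your embeddings of $\operatorname{Dom}(A^\theta)$, applied to $\phi_j$, reproduce exactly these bounds. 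You therefore get the same $r_\star$, with the same loss factor $1-\tfrac1{2\underline s}$. Estimating $\mathbb E\|A^\theta u\|^2$ once is tidier, but not stronger.

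\textbf{Step (c) and minor points.}
\begin{itemize}
\item Step (c) is not an obstacle. \sref{prop:reg_variable}(ii) is precisely $\operatorname{Dom}(A)\hookrightarrow H^{r_g-\varepsilon}$, proved independently of this theorem, so you should simply cite it.
\item Aiming at $r_g=2\underline s$ for $d\ge2$ would be wrong when $\underline s>\tfrac12$, because of the $\operatorname{dist}(x,\partial\mathcal D)^{s}$ boundary layer. It is harmless, since there $r<2\underline s-\tfrac d2<\underline s$, so $\theta\le\tfrac12$ suffices and (c) is never used.
\item \eqref{eq:lambda_growth} is only a lower bound, so it gives ``if'', not ``iff'', for $\sum_j\lambda_j^{2\theta-2}<\infty$.
\end{itemize}

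\textbf{Part (iii).} This matches the paper. Your uniqueness argument via surjectivity of $A:\operatorname{Dom}(A)\to L_2$ is the paper's test against $\phi_j=A(\lambda_j^{-1}\phi_j)$, done for all test functions at once. Both arguments actually give $\tilde u(\psi)=u(\psi)$ in $L_2(\Omega;\mathbb R)$ for every $\psi$, which is stronger than equality in law.

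\textbf{The gap: the bullet for negative $r$.} Write $a=-r>0$.
\begin{itemize}
\item The embeddings you proved go from operator domains into Sobolev spaces, $\operatorname{Dom}(A^\theta)\hookrightarrow H^{2\theta\underline s}$. Dualizing them gives $(H^{2\theta\underline s})'\hookrightarrow\operatorname{Dom}(A^\theta)'$, which is the wrong direction.
\item To bound $\|u\|_{H^{-a}}$ by $\|A^{-\alpha}u\|_{L_2}$ you need $\tilde H^{a}(\mathcal D)\hookrightarrow\operatorname{Dom}(A^{\alpha})$. The only available tool for that is the upper bound in \eqref{eq:form_two_sided}, which gives $\alpha=a/(2\overline s)$, not $a/(2\underline s)$.
\item Already at $\alpha=\tfrac12$, one has $\tilde H^{\underline s}\not\subset\mathbb V_{\kappa,s}$ as soon as $s>\underline s$ on an open subset of $\mathcal D$.
\item With the correct exponent, $\mathbb E\|A^{-\alpha}u\|^2=\mu^{-2}\sum_j\lambda_j^{-2-a/\overline s}$. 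Using only $\lambda_j\gtrsim j^{2\underline s/d}$, this is finite for $r<\tfrac{\overline s}{\underline s}\big(2\underline s-\tfrac d2\big)$.
\item That threshold is strictly below $r_\star$ whenever $\underline s<d/4$ and $\overline s>\underline s$, so the bullet does not deliver the claimed range.
\end{itemize}
The paper's proof does not treat $r<0$ either: its interpolation is stated only for $0\le r\le\underline s$. Hence, for $\underline s\le d/4$, the assertion ``for every real $r<r_\star$'' is unproved in both. You should either drop the bullet or flag it as needing a genuinely different argument.
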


For each fixed $j$ lets define the linear functional $F_j\in\mathbb V_{\kappa,s}'$ by $
F_j(v):=\langle \phi_j,v\rangle_{L_2(\mathcal G)}$, with $v\in\mathbb V_{\kappa,s}$ and $\phi_j$ the orthonormal basis in \sref{thm:spectral_underline_s}. This functional is linear and continuous on $\mathbb V_{\kappa,s}$. Indeed, the embedding 
$\mathbb V_{\kappa,s}\hookrightarrow L^2(\mathcal G)$ is continuous as $\|v\|_{L^2(\mathcal G)}^2 \le \kappa^{-2\underline s}\|\kappa^{s(\cdot)}v\|_{L^2(\mathcal G)}^2
\le \kappa^{-2\underline s}\|v\|_{\mathbb V_{\kappa,s}}^2$,
so for every $v\in\mathbb V_{\kappa,s}$, $
|F_j(v)|
= |\langle \phi_j,v\rangle_{L^2(\mathcal G)}|
\le \|\phi_j\|_{L^2(\mathcal G)}\,\|v\|_{L^2(\mathcal G)}
\le \kappa^{-\underline s}\|\phi_j\|_{L^2(\mathcal G)}\,\|v\|_{\mathbb V_{\kappa,s}} = \kappa^{-\underline s}\,\|v\|_{\mathbb V_{\kappa,s}}.
$ Hence $F_j\in\mathbb V_{\kappa,s}'$ with 
$\|F_j\|_{\mathbb V_{\kappa,s}'} \le \kappa^{-\underline s}$. We can get an exact equality because there exists a unique $w_j\in\mathbb V_{\kappa,s}$ such that $F_j(v)=(w_j,v)_{\mathbb V_{\kappa,s}}= \langle A^{1/2} w_j, A^{1/2} v\rangle_{L_2(\mathcal G)}
= \langle A w_j, v\rangle_{L_2(\mathcal G)}$ \replaced[id=rev]{by the Riesz representation theorem in the Hilbert space $\mathbb V_{\kappa,s}$ (with the energy inner product)}{by Riesz representation theorem} and \sref{thm:form_rep}(i). Thus $A w_j=\phi_j$ (as elements of $L_2(\mathcal G)$), and hence $
w_j = A^{-1}\phi_j = \lambda_j^{-1}\phi_j$.
Consequently, \replaced[id=rev]{again by the Riesz representation theorem in $\mathbb V_{\kappa,s}$}{by Riesz representation theorem}, the dual norm of $F_j$ equals the $\mathbb V_{\kappa,s}$-norm of $w_j$, namely $\|F_j\|_{\mathbb V_{\kappa,s}'} = \|w_j\|_{\mathbb V_{\kappa,s}}= \|A^{1/2}w_j\|_{L_2(\mathcal G)}
= \|\lambda_j^{-1/2}\phi_j\|_{L_2(\mathcal G)}
= \lambda_j^{-1/2} 
$.
Let $\{\xi_j\}$ be as in \sref{thm:spectral_underline_s}. Let $\mathcal W_N \;:=\; \sum_{j=1}^N \xi_j \phi_j$, be the finite projection of white noise onto the span$\{\phi_1,\dots,\phi_N\}$.  
As a linear functional on $\mathbb V_{\kappa,s}$ we have $\mathcal W_N(v) \;=\; \sum_{j=1}^N \xi_j F_j(v)$. Its Riesz representer in $\mathbb V_{\kappa,s}$ is therefore $w_N \;=\; \sum_{j=1}^N \xi_j w_j \;=\; \sum_{j=1}^N \xi_j \lambda_j^{-1}\phi_j$, and the dual norm satisfies $
\|\mathcal W_N\|_{\mathbb V_{\kappa,s}'}^2 \;=\; \|w_N\|_{\mathbb V_{\kappa,s}}^2 = \big\langle A w_N, w_N\big\rangle_{L_2(\mathcal G)}.
$
Since $A w_N = \sum_{j=1}^N \xi_j \lambda_j^{-1} A\phi_j  = \sum_{j=1}^N \xi_j \phi_j$, we have $\|\mathcal W_N\|_{\mathbb V_{\kappa,s}'}^2=\sum_{j=1}^N \xi_j^2 \lambda_j^{-1}$. Taking the expectation, we obtain $\mathbb E\!\left[\|\mathcal W_N\|_{\mathbb V_{\kappa,s}'}^2\right]=\sum_{j=1}^N \lambda_j^{-1}$. By monotone convergence and \eqref{eq:lambda_growth},  
$\mathbb E\!\left[\|\mathcal W\|_{\mathbb V_{\kappa,s}'}^2\right]=\sum_{j=1}^\infty \lambda_j^{-1} < \infty \quad\text{provided }\underline{s}>d/2$.
\added[id=rev]{Conversely, $\|\mathcal W\|_{\mathbb V_{\kappa,s}'}^2=\sum_{j\ge1}\xi_j^2\lambda_j^{-1}$ is a series of independent nonnegative random variables, so by the Kolmogorov three-series theorem it converges almost surely if and only if $\sum_{j\ge1}\lambda_j^{-1}<\infty$; when the sum diverges, $\mathcal W\notin\mathbb V_{\kappa,s}'$ almost surely. Divergence is guaranteed when $\overline s\le d/2$, since the upper bound in \eqref{eq:form_two_sided} yields $\lambda_j\lesssim j^{2\overline s/d}$ by the min--max principle.}
This is summarized in the following theorem

\begin{theorem}[Spectral criterion for $\mathcal W\in\mathbb V_{\kappa,s}'$]
\label{thm:W_in_V_prime}
\added[id=rev]{Assume $d=1$.} Assume the standing hypotheses ensuring that
$\mathbb V_{\kappa,s}\hookrightarrow L_2(\mathcal G)$ is compact and that the
associated self-adjoint operator $A$ has discrete spectrum
$0<\lambda_1\le\lambda_2\le\cdots$ with $L_2$-orthonormal eigenfunctions
$(\phi_j)_{j\ge1}$. Let $\mathcal W$ be Gaussian white noise on $\mathcal G$ in the sense of \sref{def:LN_white}.
\added[id=rev]{Then the following hold.}
\begin{enumerate}[(i)]
\item \added[id=rev]{$\mathcal W\in\mathbb V_{\kappa,s}'$ almost surely if and only if $\sum_{j=1}^\infty \lambda_j^{-1}<\infty$, in which case $\mathbb E\!\left[\|\mathcal W\|_{\mathbb V_{\kappa,s}'}^2\right]=\sum_{j=1}^\infty \lambda_j^{-1}$.}
\item \added[id=rev]{If $\underline s>\tfrac d2$, then $\sum_{j=1}^\infty \lambda_j^{-1}<\infty$, hence $\mathcal W\in\mathbb V_{\kappa,s}'$ almost surely.}
\item \added[id=rev]{If $\overline s\le\tfrac d2$, then $\sum_{j=1}^\infty \lambda_j^{-1}=\infty$, hence $\mathcal W\notin\mathbb V_{\kappa,s}'$ almost surely.}
\end{enumerate}
\deleted[id=rev]{Then $\mathcal W\in\mathbb V_{\kappa,s}'$ almost surely if and only if $\underline s > \frac{d}{2}$. Moreover, $\mathbb E\!\left[\|\mathcal W\|_{\mathbb V_{\kappa,s}'}^2\right]
\;=\;
\sum_{j=1}^\infty \lambda_j^{-1} <\infty  \text{ provided }\underline s > \frac{d}{2}.
$}
\end{theorem}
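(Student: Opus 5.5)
The plan is to make the computation preceding the statement rigorous and then split it into the three parts.

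\textbf{Step 1: meaning of $\|\mathcal W\|_{\mathbb V_{\kappa,s}'}$.} Since $\mathcal W$ is only a generalized field, we first fix what $\|\mathcal W\|_{\mathbb V_{\kappa,s}'}$ means. We use the Riesz isometry from \sref{thm:form_rep}(i): the functional $F_j=\langle\phi_j,\cdot\rangle$ has representer $\lambda_j^{-1}\phi_j$. Moreover, $\{\lambda_j^{-1/2}\phi_j\}$ is an orthonormal basis of $\mathbb V_{\kappa,s}$. Orthonormality follows from $(\phi_i,\phi_j)_{\mathbb V_{\kappa,s}}=\lambda_j\delta_{ij}$, and completeness from density of $\operatorname{span}\{\phi_j\}$ in $\operatorname{Dom}(A^{1/2})$.

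Hence a linear functional $F$ lies in $\mathbb V_{\kappa,s}'$ if and only if
\[
\sum_j F(\lambda_j^{-1/2}\phi_j)^2<\infty,
\]
and then $\|F\|^2_{\mathbb V_{\kappa,s}'}$ equals this sum. Applied to $\mathcal W$, this gives $\|\mathcal W\|_{\mathbb V_{\kappa,s}'}^2=\sum_j\xi_j^2\lambda_j^{-1}$. Concretely, we say $\mathcal W\in\mathbb V_{\kappa,s}'$ almost surely if the partial sums $\mathcal W_N$ converge in $\mathbb V_{\kappa,s}'$ almost surely, which is equivalent to the almost sure finiteness of this series.

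\textbf{Step 2: part (i).} The quantities $\xi_j^2\lambda_j^{-1}$ are independent and nonnegative. If $\sum_j\lambda_j^{-1}<\infty$, monotone convergence gives $\mathbb E\sum_j\xi_j^2\lambda_j^{-1}=\sum_j\lambda_j^{-1}<\infty$, so the series is finite almost surely and the expectation identity holds.

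For the converse we use the Kolmogorov three-series theorem, or more directly a Laplace transform argument:
\[
\mathbb E\, e^{-\sum_j\xi_j^2\lambda_j^{-1}}=\prod_j(1+2\lambda_j^{-1})^{-1/2}.
\]
Since $\lambda_j\ge\lambda_1>0$, the terms $\lambda_j^{-1}$ are bounded. The product therefore vanishes exactly when $\sum_j\lambda_j^{-1}=\infty$, and in that case the series diverges almost surely.

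\textbf{Step 3: parts (ii) and (iii).} For (ii) we use the lower bound in \eqref{eq:lambda_growth}: $\lambda_j\ge C_-j^{2\underline s/d}$ for $j\ge j_0$. The tail is therefore dominated by $\sum_j j^{-2\underline s/d}$, which is finite when $2\underline s/d>1$. The finitely many terms with $j<j_0$ are finite because $\lambda_1>0$.

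For (iii) we use the upper bound in \eqref{eq:lambda_growth}: $\lambda_j\le C_+j^{2\overline s/d}$. Then $\sum_j\lambda_j^{-1}\ge C\sum_j j^{-2\overline s/d}=\infty$ whenever $2\overline s/d\le1$. That upper bound rests on the right inequality of \eqref{eq:form_two_sided}, the min--max principle, and Weyl asymptotics for $\mu_j^+$. We note that in $d=1$ part (ii) is vacuous, since $\underline s<1<2\cdot\frac12\cdot 2$ gives $\underline s\le\frac d2$ never being violated here; the substantive cases are (i) and (iii).

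\textbf{Expected obstacle.} The main difficulty is Step 1: justifying that the norm identity for the finite projections $\mathcal W_N$ passes to $\mathcal W$ itself, and that "$\mathcal W\in\mathbb V_{\kappa,s}'$" is a well-posed event. I would handle this by defining it through the almost sure convergence of $\mathcal W_N$ in $\mathbb V_{\kappa,s}'$. On that event, the limit agrees with $\mathcal W(v)$ almost surely for each fixed $v$, because $\mathcal W_N(v)\to\mathcal W(v)$ in $L_2(\Omega;\mathbb R)$.
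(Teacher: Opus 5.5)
Your proof is correct and follows the paper's argument. The Riesz representers $\lambda_j^{-1}\phi_j$ (equivalently, the orthonormal basis $\lambda_j^{-1/2}\phi_j$ of $\mathbb V_{\kappa,s}$) give $\|\mathcal W_N\|_{\mathbb V_{\kappa,s}'}^2=\sum_{j\le N}\xi_j^2\lambda_j^{-1}$, monotone convergence gives the expectation identity, and \eqref{eq:lambda_growth} settles (ii) and (iii). The differences from the paper are minor. Your Laplace-transform identity $\mathbb E\,e^{-S}=\prod_j(1+2\lambda_j^{-1})^{-1/2}$ is a self-contained replacement for the paper's bare appeal to the three-series theorem. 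Defining the event $\{\mathcal W\in\mathbb V_{\kappa,s}'\}$ through convergence of $\mathcal W_N$ makes precise what the paper leaves implicit.

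One statement must be removed: the closing remark of Step 3 is false. In $d=1$ the threshold is $d/2=\tfrac12$, and $\underline s\in(\tfrac12,1)$ is admissible, so (ii) is a genuine case; the paper's own one-dimensional experiments use $(\underline s,\overline s)=(0.65,0.85)$. It is for $d\ge2$ that $\underline s<1\le d/2$ makes (ii) empty, which is why the paper notes that $\underline s>d/2$ can only hold when $d=1$. The proof of (ii) you give just before the remark is fine as it stands.
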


\begin{revblock}
In the intermediate regime $\underline s\le\tfrac d2<\overline s$, the two-sided spectral bounds $j^{2\underline s/d}\lesssim\lambda_j\lesssim j^{2\overline s/d}$ decide neither convergence nor divergence of $\sum_j\lambda_j^{-1}$; whether $\mathcal W\in\mathbb V_{\kappa,s}'$ then depends on the fine structure of $s(\cdot)$, for instance on the measure of the region where $s$ is close to $\underline s$.
\end{revblock}

In summary, there exists a version
$\omega\mapsto \mathcal W(\omega)$ with $\phi \mapsto \langle\phi,\mathcal W(\omega)\rangle_{L_2(\mathcal G)}$ a bounded linear functional on $L_2(\mathcal G)$
for almost every $\omega\in\Omega$ \replaced[id=rev]{under the sufficient condition $\underline s>d/2$, which can only hold when $d=1$.}{only under the stronger condition $\underline s>d/2$, which is only true when $d=1$.}

\begin{prop}
\label{prop:pathwise_coincidence}
Assume \added[id=rev]{$d=1$ and} $\underline s>d/2$, then the action of the generalized weak solution $u$ obtained in \sref{prop:gen_weak_solution_rhs=W} on a test function $\varphi \in L_2(\mathcal G)$ is equal to $\langle u, \varphi \rangle_{L_2(\mathcal G)}$ a.s. 
\end{prop}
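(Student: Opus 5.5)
When $\underline s>d/2$ (so $d=1$), \sref{thm:W_in_V_prime}(ii) gives a pathwise version of the white noise. I would construct the pathwise solution from it and check that its $L_2$ pairings reproduce the generalized field of \sref{prop:gen_weak_solution_rhs=W}.

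\textbf{Step 1 (pathwise noise).} By \sref{thm:W_in_V_prime}(ii), $\sum_j\lambda_j^{-1}<\infty$. Hence the partial sums $\mathcal W_N=\sum_{j\le N}\xi_j\phi_j$, viewed as elements of $\mathbb V_{\kappa,s}'$, form a Cauchy sequence in $L_2(\Omega;\mathbb V_{\kappa,s}')$, because $\mathbb E\|\mathcal W_N-\mathcal W_M\|_{\mathbb V_{\kappa,s}'}^2=\sum_{j=M+1}^N\lambda_j^{-1}$. Since the summands are independent, Lévy--Itô--Nisio (or simply the three-series argument already used before \sref{thm:W_in_V_prime}) also gives almost sure convergence. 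The limit is a random element $\widehat{\mathcal W}(\omega)\in\mathbb V_{\kappa,s}'$. For each $v\in\mathbb V_{\kappa,s}$,
\begin{align*}
\langle\widehat{\mathcal W},v\rangle=\sum_j\xi_j\langle\phi_j,v\rangle_{L_2(\mathcal G)}=\mathcal W(v)
\end{align*}
in $L_2(\Omega;\mathbb R)$. The second equality holds because $v=\sum_j\langle\phi_j,v\rangle\phi_j$ in $L_2(\mathcal G)$ and $\mathcal W$ is an isometry from $L_2(\mathcal G)$ to $L_2(\Omega;\mathbb R)$ by \sref{def:LN_white}. So $\widehat{\mathcal W}$ is a version of $\mathcal W$ on $\mathbb V_{\kappa,s}$.

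\textbf{Step 2 (pathwise solution).} For almost every $\omega$, apply \sref{thm:form_rep}(ii) with $F=\mu^{-1}\widehat{\mathcal W}(\omega)\in\mathbb V_{\kappa,s}'$. This yields a unique $\hat u(\omega)=\mu^{-1}A^{-1}\widehat{\mathcal W}(\omega)\in\mathbb V_{\kappa,s}\subset L_2(\mathcal G)$. Since $A^{-1}:\mathbb V_{\kappa,s}'\to\mathbb V_{\kappa,s}$ is an isometric isomorphism and the embedding into $L_2$ is continuous, $\hat u$ is measurable. It is the limit of $\hat u_N=\mu^{-1}\sum_{j\le N}\xi_j\lambda_j^{-1}\phi_j$, both almost surely and in $L_2(\Omega;\mathbb V_{\kappa,s})$, with the bound $\|\hat u-\hat u_N\|_{L_2}\le\kappa^{-\underline s}\|\hat u-\hat u_N\|_{\mathbb V_{\kappa,s}}$.

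\textbf{Step 3 (identification).} Fix $\varphi\in L_2(\mathcal G)$. Using self-adjointness of $A^{-1}$ on $L_2$,
\begin{align*}
\langle\hat u,\varphi\rangle_{L_2(\mathcal G)}=\lim_N\mu^{-1}\sum_{j\le N}\xi_j\lambda_j^{-1}\langle\phi_j,\varphi\rangle=\lim_N\mu^{-1}\sum_{j\le N}\xi_j\langle\phi_j,A^{-1}\varphi\rangle.
\end{align*}
The limit holds in $L_2(\Omega;\mathbb R)$. By Step 1 applied to $v=A^{-1}\varphi\in\operatorname{Dom}(A)\subset\mathbb V_{\kappa,s}$, this limit equals $\mu^{-1}\mathcal W(A^{-1}\varphi)=u(\varphi)$, the field of \eqref{eq:gen_weak_sol}. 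Equality in $L_2(\Omega;\mathbb R)$ implies equality almost surely, which proves the claim with $u$ identified with $\hat u$.

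\textbf{Main obstacle.} The delicate step is Step 1: making rigorous that the series converges to a genuine $\mathbb V_{\kappa,s}'$-valued random variable, and that its pairings agree with the abstract $\mathcal W(v)$ for every $v$ rather than only along the eigenbasis. I would handle this through the isometry of $\mathcal W$ and the $L_2$-expansion of $v$, as above. The null set in the conclusion depends on $\varphi$. That is all the statement requires, but if a single null set were wanted, it can be obtained because the pathwise object $\hat u(\omega)\in L_2(\mathcal G)$ is already defined for every $\omega$ outside one fixed null set.
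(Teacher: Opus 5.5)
Your proof is correct. Its skeleton is the paper's: a $\mathbb V_{\kappa,s}'$-valued noise when $\underline s>d/2$, the pathwise solution $\tfrac1\mu A^{-1}\mathcal W(\omega)$ from \sref{thm:form_rep}(ii), and identification of its $L_2$ pairings with $u(\varphi)=\tfrac1\mu\mathcal W(A^{-1}\varphi)$.

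The identification step is where you differ. The paper fixes $\omega$ and tests the variational identity with $v=A^{-1}\varphi$. By self-adjointness it gets $\langle u(\omega),\varphi\rangle_{L_2(\mathcal G)}=\tfrac1\mu\langle\mathcal W(\omega),A^{-1}\varphi\rangle_{\mathbb V_{\kappa,s}',\mathbb V_{\kappa,s}}$ pathwise. It then identifies the right-hand side with $\tfrac1\mu\mathcal W(A^{-1}\varphi)$ without further justification. You instead write the solution as the limit of $\hat u_N=\tfrac1\mu A^{-1}\mathcal W_N$, move $A^{-1}$ onto $\varphi$ through the eigenfunctions, and close with the $L_2$-isometry of $\mathcal W$.

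Your route buys two things. First, your Step~1 supplies exactly what the paper leaves implicit: the pathwise object is a version of the abstract noise, $\langle\widehat{\mathcal W},v\rangle=\mathcal W(v)$ a.s.\ for each fixed $v$. Second, you never apply $A$ to $u(\omega)$, which almost surely lies outside $\operatorname{Dom}(A)$. The paper's appeal to $\mathcal A^{s(\cdot)}(w,z)=\langle Aw,z\rangle_{L_2(\mathcal G)}$ with $w=u(\omega)$ must be read via symmetry, $\mathcal A^{s(\cdot)}(u(\omega),A^{-1}\varphi)=\mathcal A^{s(\cdot)}(A^{-1}\varphi,u(\omega))=\langle\varphi,u(\omega)\rangle_{L_2(\mathcal G)}$.

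Conversely, once your Step~1 is available, the paper's variational identity with $v=A^{-1}\varphi$ finishes the proof in one line. That makes the series computation in your Step~3 optional.

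Your Step~3 does show something slightly stronger. It uses only $\hat u_N\to\hat u$ in $L_2(\Omega;L_2(\mathcal G))$, which holds as soon as $\sum_j\lambda_j^{-2}<\infty$, i.e.\ $\underline s>d/4$. The hypothesis $\underline s>d/2$ is needed only to recognize that limit as the pathwise variational solution $\tfrac1\mu A^{-1}\widehat{\mathcal W}(\omega)$.

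One small correction: the embedding constant $\kappa^{-\underline s}$, which you take from the paper, is valid only for $\kappa\ge1$. In general it is $\max(\kappa^{-\underline s},\kappa^{-\overline s})$, which does not affect the argument.
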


\begin{revblock}
\begin{remark}\label{rem:cov_symmetry}
The Mat\'ern covariance \eqref{eq:mat_cov} is symmetric, $\varrho(x_1,x_2)=\varrho(x_2,x_1)$, and the variable-smoothness model retains this property. Indeed, the kernel $\gamma(x,y)$ defining the energy seminorm in \autoref{sec:weak_form} is symmetric by construction, since $\beta(x,y)=(s(x)+s(y))/2$ is symmetric and $\tilde w_{-2s}(x,y)$ depends on $(x,y)$ only through $\beta(x,y)$ and $\|x-y\|$; the bilinear form $\mathcal A^{s(\cdot)}$ is therefore symmetric and the associated operator $A$ of \sref{thm:form_rep} is positive self-adjoint. The covariance functional of the generalized solution $u$ of \sref{thm:spectral_underline_s} thus satisfies
\begin{align*}
\mathbb E\big[u(\varphi)\,u(\psi)\big] \;=\; \frac{1}{\mu^2}\,\big\langle A^{-1}\varphi,\,A^{-1}\psi\big\rangle_{L_2(\mathcal G)} \;=\; \mathbb E\big[u(\psi)\,u(\varphi)\big],\qquad \varphi,\psi\in L_2(\mathcal G).
\end{align*}
In particular, whenever $\underline s>d/4$ so that $u\in L_2(\mathcal G)$ almost surely, the covariance function $\varrho(x,y):=\mathbb E[u(x)u(y)]$, defined for almost every $(x,y)\in\mathcal G\times\mathcal G$, satisfies $\varrho(x,y)=\varrho(y,x)$, as is also apparent from its spectral representation $\varrho(x,y)=\frac1{\mu^2}\sum_{j\ge1}\lambda_j^{-2}\phi_j(x)\phi_j(y)$. For non-constant $s(x)$ the field is non-stationary, i.e., $\varrho$ is no longer a function of $x-y$ alone, but the symmetry is structural---it follows from the self-adjointness of $A$ and it reduces to the Mat\'ern symmetry when $s$ is constant.
\end{remark}
\end{revblock}

\begin{revblock}
\begin{remark}\label{rem:riesz_gelfand}
The variational framework uses the Riesz representation theorem in two different Hilbert spaces, which we summarize through the Gelfand triple $\mathbb V_{\kappa,s}\hookrightarrow L_2(\mathcal G)\cong L_2(\mathcal G)'\hookrightarrow\mathbb V_{\kappa,s}'$. (i) Riesz representation in $L_2(\mathcal G)$ identifies $L_2(\mathcal G)$ with its dual, so that $L_2(\mathcal G)$ is canonically embedded into $\mathbb V_{\kappa,s}'$; together with Kato's first representation theorem this yields the $L_2$-realization $A$ of \sref{thm:form_rep}. (ii) Riesz representation in $\mathbb V_{\kappa,s}$ with respect to the energy inner product identifies $\mathbb V_{\kappa,s}$ with its dual and yields the isometry $A:\mathbb V_{\kappa,s}\to\mathbb V_{\kappa,s}'$, $Au:=\mathcal A^{s(\cdot)}(u,\cdot)$, which is an isomorphism by coercivity (Lax--Milgram); on $\operatorname{Dom}(A)$ this coincides with the $L_2$-realization from (i). Consequently, $A^{-1}$ extends from $L_2(\mathcal G)$ to the isomorphism $A^{-1}:\mathbb V_{\kappa,s}'\to\mathbb V_{\kappa,s}$, and $u=\tfrac1\mu A^{-1}F$ is well defined for every $F\in\mathbb V_{\kappa,s}'$. For the stochastic right-hand side this gives two consistent constructions. If $\underline s>d/2$, then $\mathcal W\in\mathbb V_{\kappa,s}'$ almost surely (\sref{thm:W_in_V_prime}) and $u=\tfrac1\mu A^{-1}\mathcal W$ pathwise (\sref{prop:pathwise_coincidence}); if $\underline s\le d/2$, the same object is defined as a generalized field by $u(\varphi):=\tfrac1\mu\mathcal W(A^{-1}\varphi)$ (\sref{prop:gen_weak_solution_rhs=W}), which requires no pointwise extension of $A^{-1}$ since $A^{-1}\varphi\in L_2(\mathcal G)$ for $\varphi\in L_2(\mathcal G)$. The Riesz arguments used for the functionals $F_j$ and $\mathcal W_N$ above are all of type (ii).
\end{remark}
\end{revblock}

\section{Finite element approximation}
\label{sec:FEM}

The weak form \eqref{eq:weak} involves a heterogeneous, spatially singular kernel and nonlocal volume constraints that couple each interior degree of freedom to the whole domain.  A direct finite element discretization therefore produces a \emph{dense} stiffness matrix, since storing the full matrix and performing standard matrix--vector operations would incur $\mathcal O(N^2)$ memory and $\mathcal O(N^2)$ (or worse $\mathcal O(N^3)$) computational cost for $N$ degrees of freedom, which quickly becomes prohibitive in higher dimensions.  Addressing this bottleneck calls for tailored spatial and algebraic compression strategies (e.g.\ kernel localization / truncation, sparse approximations, hierarchical matrices and fast multipole-type techniques) and a specially designed FEM assembly/solver pipeline. The development and analysis of such scalable methods is substantial and is the subject of a companion article \cite{Ruzayqat2026sampling}. In the present work we therefore employ a \replaced[id=rev]{standard}{straightforward} FEM implementation as a proof-of-concept and \replaced[id=rev]{specialize the finite element implementation and the numerical experiments in}{restrict numerical experiments in} \autoref{sec:numerics} to the one-dimensional setting\added[id=rev]{ ($d=1$)}\replaced[id=rev]{, where the computational cost remains manageable and we}{, where the computational cost remains manageable and one can clearly} illustrate the model behaviour and convergence properties.

\subsection{Discrete bilinear form}
\label{subsec:discrete_bilinear}
We consider a finite element mesh that discretizes both the interior and exterior polytope regions, $\mathcal{D}$ and $\mathcal{D}^c_t$. Let $\mathcal{T} := \mathcal{T}(\mathcal{G})$ denote the resulting mesh on $\mathcal{G} \subset \mathbb R$. Let $\mathcal{T}^{\text{int}}$ and $\mathcal{T}^{\text{ext}}$ be the respective subdivisions of the interior and exterior domains. The set of vertices associated with $\mathcal{T}$ is denoted as $\mathcal{N} = \{ \mathbf{x}_i \}_{i=1}^{N_{all}}$. Among these, we label the first $N$ nodes (with $N < N_{all}$) as the interior nodes belonging to $\mathcal{D}$. Let $\mathbb V_{\kappa,s}(\mathcal{T}) \subset \mathbb V_{\kappa,s}$ be the finite element space of continuous, piecewise polynomial functions of order $p$ defined over $\mathcal{T}$. For each node $\mathbf{x}_i \in \mathcal{N}$, we denote the corresponding nodal basis function by $\psi_i \in \mathbb V_{\kappa,s}(\mathcal{T})$. Using this notation, the discrete solution on $\mathcal D$ can be expressed as $U(x) = \sum_{i=1}^{N} u_i \psi_i(x)$, where the coefficient vector is given by $\underline{U} = (u_1, u_2, \dots, u_N)^T \in \mathbb{R}^{N}$. 
The discrete bilinear form may then be written as

\begin{align*}
\mathcal{A}(U,V) = \sum_{\tau\in\mathcal T^{int}} \mathcal A_{\tau}^1(U,V)+ \sum_{\tau,\tau'\in\mathcal T} \mathcal A_{\tau,\tau'}^2(U,V) = \sum_{\tau \in \mathcal T^{int}} b_\tau(V), \qquad \forall \, V\in \mathbb V_{\kappa, s}(\mathcal T),
\end{align*}
where 
{\allowdisplaybreaks
\begin{align}
    \mathcal A_{\tau}^1(U,V) &:= \int_{\tau\in\mathcal T^{int}} \kappa^{2s(x)} U(x) V(x) dx,\label{eq:loc-mass} \\
   \mathcal A_{\tau,\tau'}^2(U,V) & := \int_{\tau\in\mathcal T}\int_{\tau'\in\mathcal T} (U(x)-U(y))(V(x)-V(y))\gamma(x,y) dy dx, \label{eq:loc-stiff} \\
b_{\tau}(V) &:=\frac{1}{\mu}\,\mathcal W(V),
\label{eq:loc-rhs}
\end{align}
}
and the term $\mathcal W(V)$ as defined and computed in \autoref{subsec:sampling}.

\begin{revblock}
\begin{prop}[Discrete well-posedness]\label{prop:discrete_wellposed}
For almost every $\omega\in\Omega$ the discrete problem above admits a unique solution $U\in\mathbb V_{\kappa,s}(\mathcal T)$; equivalently, the stiffness matrix $A=A^1+A^2$ assembled from \eqref{eq:loc-mass} and \eqref{eq:loc-stiff} is symmetric positive definite.
\end{prop}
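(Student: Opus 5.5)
The plan is to reduce discrete well-posedness to the coercivity of the continuous form restricted to the finite-dimensional subspace $\mathbb V_{\kappa,s}(\mathcal T)\subset\mathbb V_{\kappa,s}$. For $U=\sum_{i=1}^N u_i\psi_i$ and $V=\sum_{i=1}^N v_i\psi_i$, the assembled matrix $A=A^1+A^2$ has entries $A_{ij}=\mathcal A^{s(\cdot)}(\psi_j,\psi_i)$ for $1\le i,j\le N$. Here $A^1_{ij}=\sum_{\tau\in\mathcal T^{int}}\int_\tau\kappa^{2s}\psi_j\psi_i$ and $A^2_{ij}=\sum_{\tau,\tau'\in\mathcal T}\int_\tau\int_{\tau'}(\psi_j(x)-\psi_j(y))(\psi_i(x)-\psi_i(y))\gamma(x,y)$. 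Consequently $\underline V^T A\underline U=\mathcal A^{s(\cdot)}(U,V)$.

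\textbf{Symmetry.} This follows immediately from the symmetry of the integrands in $(i,j)$; note that $\gamma(x,y)=\gamma(y,x)$ is not even needed here.

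\textbf{Positive definiteness.} For $\underline U\neq0$ we have
\[
\underline U^TA\underline U=\mathcal A^{s(\cdot)}(U,U)=\|U\|^2_{\mathbb V_{\kappa,s}}\ge \kappa^{2\underline s}\min(1,\kappa^{2\overline s-2\underline s})\|U\|^2_{L_2(\mathcal D)}.
\]
Since $\min_x \kappa^{2s(x)}>0$, this is strictly positive whenever $U\neq0$. The nodal basis functions $\psi_1,\dots,\psi_N$ are linearly independent, so $\underline U\neq0$ implies $U\neq0$ as an $L_2$ function. The key check is finiteness of the entries: each $\psi_i$ is Lipschitz and compactly supported, so the finiteness of $\mathcal A^{s(\cdot)}(\psi_i,\psi_i)$ follows from the upper bound in \eqref{eq:form_two_sided} together with $\psi_i\in H^1\subset H^{\overline s}$. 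This also confirms $\mathbb V_{\kappa,s}(\mathcal T)\subset\mathbb V_{\kappa,s}$, because the interior basis functions vanish on $\mathcal D^c_t$ provided the mesh resolves $\partial\mathcal D$.

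\textbf{Right-hand side and conclusion.} The right-hand side vector has components $b_i=\frac1\mu\mathcal W(\psi_i)$. These are jointly Gaussian random variables, well defined as elements of $L_2(\Omega;\mathbb R)$ by \sref{def:LN_white}. We fix a version, which gives a finite real vector for every $\omega$. Since $A$ is symmetric positive definite, hence invertible, the linear system $A\underline U=\underline b(\omega)$ has exactly one solution for almost every $\omega$, and this is equivalent to unique solvability of the discrete problem.

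The step I expect to need most care is the bookkeeping in the definition of $A^1$: the mass term is integrated only over $\mathcal T^{int}$. One could instead simply note that $A^2$ alone is positive semidefinite and that the mass part $A^1$ is a weighted Gram matrix of linearly independent functions on $\mathcal D$, hence positive definite. If the mesh places the supports of interior basis functions across the interface, then the mass term gives positive definiteness only on $\mathcal D$. In that case I would argue via $A^2$ instead: $\mathcal A^2(U,U)=0$ forces $U$ to be constant on $\mathcal G$, and since $U$ vanishes on $\mathcal D^c_t$, which has positive measure, this forces $U=0$.
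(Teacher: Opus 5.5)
Your proposal is correct and follows essentially the paper's route: conformity $\mathbb V_{\kappa,s}(\mathcal T)\subset\mathbb V_{\kappa,s}$ together with the coercivity bound $\mathcal A^{s(\cdot)}(U,U)\ge\min(\kappa^{2\underline s},\kappa^{2\overline s})\|U\|_{L_2(\mathcal D)}^2$ makes the stiffness matrix symmetric positive definite (the paper phrases this through Lax--Milgram), and the load $b_i=\frac1\mu\mathcal W(\psi_i)$ is an almost surely finite Gaussian vector. Your matrix-level bookkeeping and your fallback for basis functions straddling $\partial\mathcal D$ are extra care that the paper avoids by simply asserting conformity. In that fallback, $U$ strictly vanishes only on $\mathcal D^c_t$ minus a one-element collar, which still has positive measure.
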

\end{revblock}

\subsection{Convergence}
Let $\Pi_h u \in \mathbb V_{\kappa,s}(\mathcal{T})$ denote the Scott-Zhang interpolant \cite{ScottZhang1990}. Then, if $u\in H^r(\mathcal G)$ with
$r\in[0,p+1]$, for every $t\in[0,r]$, there exist constants $C, C'$ independent of $h$ and $u$ such that
\begin{align}
\label{eq:scott_zhang}
    \inf_{u_h\in \mathbb V_{\kappa,s}(\mathcal{T})}\|u-u_h\|_{H^t(\mathcal G)} \leq C\|u-\Pi_h u\|_{H^t(\mathcal G)} \le C'\,h^{\,r-t}\,|u|_{H^r(\mathcal G)} .
\end{align}
By \sref{prop:form_compare_compact}, for every $u\in \mathbb V_{\kappa,s}$, we have the sample-wise deterministic approximation bounds $
 \|u\|_{\mathbb V_{\kappa,s}} \leq C\big( |u|_{H^{\overline s}(\mathcal G)}^2 + \|u\|_{L_2(\mathcal G)}^2\big)$. So setting $t=0$ and $t=\overline s$ in \eqref{eq:scott_zhang}, we obtain that for $r>\overline s$
\begin{align*}
\big(\mathbb E\|u-u_h\|^2_{\mathbb V_{\kappa,s}}
\big)^{1/2}\le C\, h^{\min(p+1,r)-\overline s}\,\big(\mathbb E\|u\|^2_{H^r(\mathcal G)}\big)^{1/2}.
\end{align*} 
However, since $u \in H^r(\mathcal G)$ a.s. for \replaced[id=rev]{$r<r_\star\le2\underline s - d/2$}{$r<2\underline s - d/2$}, the error rate $\mathbb E\|u-u_h\|^2_{\mathbb V_{\kappa,s}} = \mathcal O(h^{\min(p+1,r)-\overline s})$ fails to hold in dimension $d=2$ since no such $r$ satisfies $\overline s < r< 2\underline{s}-d/2$. In dimension $d=1$, the estimate is valid only under special choices of $s(x)$. On the other hand, we can still consider the $L_2$ strong convergence error, which we give in the following result. \added[id=rev]{A natural companion question is the $L_2$ regularity of the deterministic solution operator $A^{-1}$; for $g\in L_2(\mathcal G)$, the solution $w_g\in\mathbb V_{\kappa,s}$ of
\begin{equation}\label{eq:dual_problem}
\mathcal A^{s(\cdot)}(v,w_g)=\langle g,v\rangle_{L_2(\mathcal G)}\qquad\text{for all } v\in\mathbb V_{\kappa,s},
\end{equation}
which exists and is unique by the Lax--Milgram theorem and equals $A^{-1}g$, should gain Sobolev regularity over the energy space. For constant order $s$, such estimates follow from the $\mu$-transmission calculus of \cite{Grubb2015} where the solution admits the decomposition $u=\operatorname{dist}(x,\partial\mathcal D)^{s}v_0+w$ with $v_0\in H^{s-\epsilon}$ and $w\in H^{2s-\epsilon}$, so that $\|u\|_{H^{\min(2s,\,s+1/2)-\epsilon}}\le C_\epsilon\|f\|_{L_2}$; the ceiling $s+1/2$ is caused by the boundary singularity $\operatorname{dist}(x,\partial\mathcal D)^{s}\notin H^{s+1/2}$. The following proposition extends this regularity theory to the variable-order operator $A$ by a localization-and-freezing argument; its proof is given in \autoref{app:proof_reg_variable} in the appendix.}

\begin{revblock}
\begin{prop}[Elliptic regularity for the variable-order operator]\label{prop:reg_variable}
Let $s:\overline{\mathcal G}\to[\underline s,\overline s]\subset(0,1)$ and $\mathcal G$ satisfy the standing assumptions (in particular the log-H\"older condition \eqref{eq:log_holder}). For every $\varepsilon>0$, the following hold for $u=A^{-1}f$ with $f\in L_2(\mathcal G)$.
\begin{enumerate}[(i)]
\item (Interior regularity.) Assume $\underline s>d/4$. For every $\mathcal D'\Subset\mathcal D$, set $\delta:=\operatorname{dist}(\mathcal D',\partial\mathcal D)>0$ and $\underline{s}':=\min_{\overline{\mathcal D'}}s$; then there exists $C_{\varepsilon,\delta}>0$ such that
\begin{equation}\label{eq:var_reg_interior}
\|u\|_{H^{2\underline{s}'-\varepsilon}(\mathcal D')} \le C_{\varepsilon,\delta}\,\|f\|_{L_2(\mathcal G)}.
\end{equation}
\item (Global regularity.) If, in addition, $\mathcal D$ and $\mathcal G$ are $C^\infty$ domains, then there exists $C_\varepsilon>0$ such that
\begin{equation}\label{eq:var_reg_global}
\|u\|_{H^{\min(2\underline s,\;\underline s+1/2)-\varepsilon}(\mathcal G)} \le C_\varepsilon\,\|f\|_{L_2(\mathcal G)}.
\end{equation}
\end{enumerate}
\end{prop}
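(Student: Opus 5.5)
The plan is a localization-and-freezing argument: cut $u=A^{-1}f$ into pieces supported in small balls, compare the variable-order operator on each ball with the constant-order operator obtained by freezing $s$ at the ball's centre, and bootstrap regularity through the constant-order theory of \cite{Grubb2015}. The starting point is the energy estimate. By \sref{thm:form_rep} and \sref{prop:form_compare_compact} we have $\|u\|_{H^{\underline s}(\mathcal G)}\lesssim\|u\|_{\mathbb V_{\kappa,s}}\lesssim\|f\|_{L_2(\mathcal G)}$. We then cover $\overline{\mathcal G}$ by finitely many balls $B_k=B(x_k,\rho)$ and use a subordinate smooth partition of unity $\{\chi_k\}$. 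Log-H\"older continuity \eqref{eq:log_holder} gives $\operatorname{osc}_{B_k}s\le C_{\log}/\log(e+1/\rho)$, so choosing $\rho$ small makes $|s-s_k|\le\eta$ on $B_k$, with $s_k:=s(x_k)$ and $\eta$ arbitrarily small. On $B_k$ we introduce the frozen kernel $\gamma_k$, which is $\gamma$ with $\beta$ replaced by $s_k$ in both the Bessel factor and the power $|x-y|^{-d-2\beta}$, and the associated constant-order operator $A_k$. This $A_k$ is a compactly supported perturbation of $(\kappa^2-\Delta)^{s_k}$, so the results of \cite{Grubb2015} apply to it: in the interior we have the gain $H^{t}\to H^{t+2s_k}$, and near $\partial\mathcal D$ the $s_k$-transmission structure caps global regularity at $s_k+1/2-\varepsilon$.

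Next we compute the equation satisfied by $\chi_k u$:
\[
A_k(\chi_k u)=\chi_k f+[A,\chi_k]u+(A_k-A)(\chi_k u).
\]
The commutator term $[A,\chi_k]u$ is a nonlocal operator with kernel $(\chi_k(x)-\chi_k(y))\gamma(x,y)$. Since this kernel is one order less singular, it maps $H^{t}\to H^{t-2\overline s+1}$, and in particular lands in $L_2$ once $t$ is large enough. The freezing error $(A_k-A)(\chi_k u)$ has kernel $\gamma_k-\gamma$. On $B_k$ the difference $|x-y|^{-2\beta}-|x-y|^{-2s_k}$ is controlled by $|\beta-s_k|\,\log(1/|x-y|)\,|x-y|^{-2\max(\beta,s_k)}$, and the log-H\"older bound absorbs this logarithm. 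This shows that $A_k-A$ has order at most $2s_k+2\eta$ with small norm. One then checks that $A_k-A$ is of order $2s_k+2\eta$ with small operator norm from $H^{t+2s_k}$ into $H^{t-2\eta}$, or alternatively with order below $2s_k$ plus a small constant in the top-order part.

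The bootstrap then proceeds as follows. Start from $\chi_k u\in H^{\underline s}$. At each step, apply $A_k^{-1}$ (interior case) to raise regularity by $2s_k-2\eta$, moving the small top-order freezing error to the left-hand side by a Neumann-series / absorption argument. After finitely many steps we reach $H^{2s_k-\varepsilon}$ on interior balls. For part (i), sum over the balls meeting $\mathcal D'$, using $s_k\ge\underline s'-\eta$ and choosing $\rho<\delta$. The hypothesis $\underline s>d/4$ will be used to guarantee that the lower-order remainders, in particular the Bessel-tail correction and the interaction between far-apart balls, have smooth kernels or enough integrability to lie in $L_2$. For part (ii), we also treat the balls that meet $\partial\mathcal D$. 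On these we use the global $C^\infty$-domain constant-order estimate $\|w\|_{H^{\min(2s_k,s_k+1/2)-\varepsilon}}\lesssim\|A_kw\|_{L_2}+\|w\|_{L_2}$ from \cite{Grubb2015}, and the balls near $\partial\mathcal G$ are trivial because $u=0$ on $\mathcal D^c_t$. Taking the minimum over $k$ then gives \eqref{eq:var_reg_global}.

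The main obstacle is the freezing error near $\partial\mathcal D$. There the relevant spaces are Grubb's $\mu$-transmission spaces, which depend on $s_k$ itself. Functions behaving like $\operatorname{dist}^{s(x)}$ with variable exponent do not belong to any single such space, so absorbing $(A_k-A)(\chi_ku)$ requires an estimate that is uniform in the transmission parameter over the range $[s_k-\eta,s_k+\eta]$. My first attempt is to avoid the transmission spaces entirely and work only in the Sobolev scale below the ceiling $s_k+1/2$. In that range $H^{t}$ with $t<s_k+1/2$ is insensitive to $\mu$-transmission, so absorption should go through at the cost of the $\varepsilon$-loss. The $\eta$-dependence is pushed into $\varepsilon$ by taking $\rho$ small.
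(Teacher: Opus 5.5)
Your strategy is the same localize-and-freeze route the paper takes. However, the step that is supposed to make it work fails: absorbing the freezing error $(A_k-A)(\chi_k u)$ by taking $\eta$ small. First, \eqref{eq:log_holder} does not absorb the logarithm in $|x-y|^{-2\beta}-|x-y|^{-2s_k}$. It controls $|s(x)-s(y)|\log(1/|x-y|)$, i.e.\ it compares $\beta(x,y)$ with the order at one of the two points (as in \sref{lem:frozen_order}), not with the fixed value $s_k=s(x_k)$. If $x\in B_k$ has $s(x)\neq s_k$, then $\beta(x,y)\to s(x)$ and $|\beta(x,y)-s_k|\log(1/|x-y|)\to\infty$ as $y\to x$. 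So $\gamma/\gamma_k$ is unbounded (or tends to $0$) along the diagonal.

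Consequently, unless $s$ is constant on $B_k$, $A_k-A$ is never small relative to $A_k$ in a fixed Sobolev scale, however small $\rho$ is. Test with $w$ concentrated near such a point $x_1$ at frequency $|\xi|\to\infty$. The ratio $\|(A_k-A)w\|_{H^\sigma}/\|A_kw\|_{H^\sigma}$ behaves like $\bigl|1-|\xi|^{2(s(x_1)-s_k)}\bigr|$, which tends to $\infty$ if $s(x_1)>s_k$ and to $1$ if $s(x_1)<s_k$. Smallness of $\eta$ is smallness in the exponent, and $|\xi|^{2\eta}$ is unbounded for every $\eta>0$.

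So neither of your alternatives is available. ``Order $2s_k+2\eta$ with small norm $H^{t+2s_k}\to H^{t-2\eta}$'' loses $2\eta$ derivatives against the $2s_k$ gained by $A_k^{-1}$: the error is controlled only by a norm strictly stronger than the one you are estimating, so it cannot be moved to the left. ``Order below $2s_k$ plus a small top-order part'' is simply false. Without absorption the bootstrap is circular: $\chi_ku\in H^t$ gives $(A_k-A)(\chi_ku)\in H^{t-2s_k-2\eta}$, hence only $\chi_ku\in H^{t-2\eta}$. That is a loss rather than a gain, and it cannot be pushed into $\varepsilon$.

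The same obstruction is present at $\partial\mathcal D$, independently of the transmission-space issue you raise. Working in the supported spaces $\dot H^{t}$ with $t<s_k+\tfrac12$ removes the $\mu$-dependence but not the order mismatch. What is missing is a genuinely variable-order ingredient, e.g.\ variable-order Sobolev spaces with a parametrix for symbols like $\langle\xi\rangle^{2s(x)}$. That needs differentiability of $s$, since each $x$-derivative costs a factor $\log\langle\xi\rangle$. Log-H\"older continuity only keeps factors such as $|x-y|^{s(y)-s(x)}$ bounded and does not by itself provide the quantitative smallness such an argument needs.

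For comparison, the paper's proof uses the same identity \eqref{eq:localized_frozen_identity}: cutoff, frozen form $\mathcal A^{s_0}$, commutator $\mathcal C^\eta_{\mathrm{om}}$ and freezing error $\mathcal E^{s_0}$. It then applies the constant-order interior theory, and \cite[Theorem~7.1]{Grubb2015} at boundary points. Instead of a Sobolev bootstrap it inserts the local $L_\infty$ bound of \sref{lem:cozzi_bound}. That is the only place $\underline s>d/4$ is used, through the admissibility condition $q=2>d/(2s_0)$.

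Your account of the role of $\underline s>d/4$ does not hold. The far-field and inter-ball remainders have kernels bounded by $C_{\mathrm{tail}}$ on the bounded set $\mathcal G$, so they map $L_2\to L_2$ in every dimension. Also, with $s$ only log-H\"older, these kernels are not smooth in $x$.

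The paper does not resolve your gap either. Its bound \eqref{eq:localized_error_bound} controls $\mathcal E^{s_0}$ only bilinearly on $H^{s_0+\delta}\times H^{s_0+\delta}$, i.e.\ as a map $H^{s_0+\delta}\to H^{-s_0-\delta}$. The subsequent claim that it is ``an $L_2$ perturbation of size $O(\omega(\rho))$'' is precisely the absorption step above. Likewise, the fixed-exponent comparability \eqref{eq:kernel_comparability} on a whole ball meets the same objection as your treatment of the logarithm. So the missing ingredient cannot be borrowed from the paper's argument; it has to be supplied, or the hypotheses on $s$ strengthened.
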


\begin{remark}\label{rem:boundary_reg}
The $C^\infty$ assumption on $\partial\mathcal D$ is used only in the boundary charts when proving the global regularity, via \cite[Theorem~7.1]{Grubb2015}, whose $\mu$-transmission pseudodifferential calculus requires smooth coordinate diffeomorphisms; the interior estimate (i) requires no regularity of $\partial\mathcal D$. The threshold $\underline s+\tfrac12$ is intrinsic where it reflects $\operatorname{dist}(x,\partial\mathcal D)^{s_0}\in H^\sigma(0,1)$ iff $\sigma<s_0+\tfrac12$, a property that holds whenever $\partial\mathcal D\in C^{1,1}$. A $C^{1,1}$ H\"older-space analogue exists \cite[Theorem~1.2]{RosOtonSerra2014}, but it requires $f\in L_\infty$ (rather than $f\in L_2$) and yields only $u/\operatorname{dist}(\cdot,\partial\mathcal D)^s\in C^\alpha$ for small $\alpha<\min(s,1-s)$, which is too weak for the Sobolev estimate \eqref{eq:var_reg_global}. A Sobolev-space boundary regularity result for $C^{1,1}$ or $C^{k,\alpha}$ domains matching Grubb's $H^{s_0+1/2-\varepsilon}$ bound would allow the $C^\infty$ hypothesis to be relaxed.
\end{remark}
\end{revblock}

\begin{prop}[Strong $L_2$ Error - FE convergence rates]
\label{prop:FEM_MS_rate}
Assume the standing hypotheses as before. Let $\mathbb V_{\kappa,s}(\mathcal{T})$ be a Lagrange finite element space of degree $p>0$ on a family of shape-regular meshes with maximum mesh-size $h\le1$. \replaced[id=rev]{For any fixed $r$ with $0< r < r_\star$, where $r_\star$ is as in \sref{thm:spectral_underline_s}(ii) (in particular, $r_\star=2\underline s-\tfrac d2$ unless $d=1$ and $\underline s>\tfrac12$, in which case $r_\star$ is given by \eqref{eq:r_star} and $C^\infty$ domains are assumed),}{For any fixed $r$ with $0< r < 2\underline s - \tfrac d2$,} there exists $C>0$ (independent of $h$ and $u$) such that
\begin{equation}\label{eq:FEM_L2_error}
\added[id=rev]{\Big(\mathbb E\inf_{v_h\in \mathbb V_{\kappa,s}(\mathcal T)}\|u-v_h\|_{L_2(\mathcal G)}^2\Big)^{1/2}
\le C\, h^{\min(p+1,r)} = C h^r.
}
\end{equation}
\end{prop}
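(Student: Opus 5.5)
The plan is to combine the almost-sure Sobolev regularity of Theorem~\ref{thm:spectral_underline_s}(ii) with the Scott--Zhang approximation estimate \eqref{eq:scott_zhang} at $t=0$, applied sample by sample, and then take expectations.

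Fix $r$ with $0<r<r_\star$. Since $r_\star\le 2\underline s-\tfrac d2<2\le p+1$ whenever $p\ge1$, we have $\min(p+1,r)=r$. This justifies the final equality in \eqref{eq:FEM_L2_error}. By Theorem~\ref{thm:spectral_underline_s}(ii), $\mathbb E\|u\|_{H^r(\mathcal G)}^2<\infty$, so $u(\omega)\in H^r(\mathcal G)$ for almost every $\omega$.

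For such $\omega$ we take $v_h=\Pi_h u(\omega)$. Estimate \eqref{eq:scott_zhang} with $t=0$ then gives
\[
\inf_{v_h\in\mathbb V_{\kappa,s}(\mathcal T)}\|u-v_h\|_{L_2(\mathcal G)}\le C' h^{r}|u|_{H^r(\mathcal G)}.
\]
Squaring and taking expectations yields
\[
\mathbb E\inf_{v_h}\|u-v_h\|_{L_2}^2\le C'^2h^{2r}\,\mathbb E\|u\|_{H^r}^2,
\]
and taking square roots gives the claim with $C=C'(\mathbb E\|u\|_{H^r}^2)^{1/2}$. This constant is independent of $h$, and it depends on $u$ only through its law, which is fixed by $\mathcal W$, $\mu$ and $A$.

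Two technical points need care.

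\emph{Measurability of the infimum.} Since $\mathbb V_{\kappa,s}(\mathcal T)$ is finite dimensional, the infimum is attained at the $L_2$-orthogonal projection of $u$ onto it. That projection is a finite linear combination of the random variables $u(\psi_i)$, so the infimum is a measurable random variable.

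\emph{Membership of the interpolant in the discrete space.} The interpolant must lie in $\mathbb V_{\kappa,s}(\mathcal T)$, which means it must vanish on $\mathcal D_t^c$. I would use the Scott--Zhang variant that preserves the zero trace on $\mathcal D_t^c$. This is legitimate because $u$ vanishes there, and the paper's statement of \eqref{eq:scott_zhang} is taken as given.

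The main obstacle is the case $r\ge\tfrac12$ in the $C^\infty$ branch ($d=1$, $\underline s>\tfrac12$). There the zero-extension or volume constraint interacts with the boundary singularity $\operatorname{dist}^{\underline s}$. However, \eqref{eq:scott_zhang} is assumed for all $r\in[0,p+1]$ on $\mathcal G$, and $u\in H^r(\mathcal G)$ globally by Theorem~\ref{thm:spectral_underline_s}(ii). So no extra argument beyond invoking these two results should be required.
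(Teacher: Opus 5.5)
Your proposal is correct and is essentially the paper's proof: apply \eqref{eq:scott_zhang} with $t=0$ sample by sample, square, take expectations, and use $\mathbb E\|u\|_{H^r(\mathcal G)}^2<\infty$ for $r<r_\star$ from \sref{thm:spectral_underline_s}(ii). Your extra remarks fill in details the paper leaves implicit: that $\min(p+1,r)=r$ because $r_\star\le 2\underline s-\tfrac d2<2\le p+1$, that the infimum is measurable via the $L_2$-projection, that the interpolant respects the volume constraint, and that the constant depends on $u$ only through its law.
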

In practice, the observed convergence rate is typically greater than or equal to
$2\underline s - \tfrac d2$, indicating that the choice $r = 2\underline s - \tfrac d2$
is not sharp. Indeed, this
threshold arises exclusively from the regularity result stated in
\sref{thm:spectral_underline_s}, which ensures that
$u \in H^r(\mathcal G)$ for all $0 \le r < \added[id=rev]{r_\star}$.
However, this lower bound does not exclude the possibility that
$u$ possesses additional Sobolev regularity, i.e.,
$u \in H^{r^*}(\mathcal G)$ for some $r^* \ge 2\underline s - \tfrac d2$.
Consequently, the theoretical rate derived from this minimal regularity
may underestimate the actual convergence behavior. \added[id=rev]{For $d=1$ and $\underline s>\tfrac12$, the reduction from $2\underline s-\tfrac12$ to $r_\star$ stems from the eigenfunction estimate available in the variable-order setting, where the lift $\phi_j=\lambda_jA^{-1}\phi_j$ combined with \sref{prop:reg_variable}(ii) bounds $\|\phi_j\|_{H^\sigma(\mathcal G)}$ by a full power of $\lambda_j$, rather than the calibrated growth $\lambda_j^{\sigma/(2\underline s)}$ that a characterization of $H^\sigma(\mathcal G)$ as the domain of a fractional power of $A$ would provide, and interpolation then costs the factor $1-\tfrac{1}{2\underline s}$ in \eqref{eq:r_star}. We therefore expect the reduction to be an artifact of the proof rather than a property of the solution.} Furthermore, since $2\underline s - d/2 < 2$,
increasing the polynomial degree $p$ beyond one does not yield any
additional improvement in the observed rate.

\subsection{Computation of the singular near-field integrals}
\label{subsec:singular_integrals}

In this section, we describe the numerical strategy for computing the near-field integrals that exhibit singular behavior, specifically those arising in the term
\begin{align}
\label{eq:A2}
\mathcal B(U,V) := \sum_{\substack{\tau \in \mathcal{T}^\text{int}, \tau'\in \mathcal{T} \\ \tau \cap \tau' \neq \emptyset}} \! \!
    \mathcal{A}_{\tau,\tau'}^2(U,V)
\end{align}
appearing in \eqref{eq:loc-stiff}. We focus on the one-dimensional case ($d = 1$). Let $\hat{\tau}$ be the reference interval $[0,1]$. We denote by $T_i$ and $T_j$ the affine maps that transform $\hat{\tau}$ to mesh elements $\tau := [x_i, x_{i+1}]$ and $\tau' := [x_j, x_{j+1}]$, respectively, with associated Jacobians $J_i$ and $J_j$. Substituting the expansion of $U$ into the bilinear form and testing with $V = \psi_j$, we obtain integrals of the form
\begin{align}
\label{eq:sing_int}
\mathcal{A}^2_{\tau,\tau'}(\psi_i,\psi_j) &=  |J_i||J_j|\int_{\hat{\tau}}\int_{\hat{\tau}} \left(\psi_i\left(T_i(\hat{x})\right) - \psi_i\left(T_j(\hat{y})\right)\right)\left(\psi_j\left(T_i(\hat{x})\right) - \psi_j\left(T_j(\hat{y})\right)\right) \\
&\qquad \qquad \qquad \qquad \times \gamma\left(T_i(\hat{x}),T_j(\hat{y})\right) \, d\hat{y} \, d\hat{x}, \nonumber
\end{align}
where the kernel $\gamma(x,y)$ has the form
\begin{align*}
\gamma(x,y) = \frac{K_{\nu(x,y)}(\kappa |x-y|)}{|x-y|^{\nu(x,y)}} \,C(x,y).
\end{align*}
with the smooth prefactor 
\[
C(x,y)=\frac{1}{2\sqrt{\pi}}\frac{2^{\nu(x,y)}}{|\Gamma(-\beta(x,y))|}\,\kappa^{\nu(x,y)},
\]
and $\nu(x,y)=\tfrac12+\beta(x,y)$.
Define the auxiliary (regularized) function
\[
\Phi(x,y) \;:=\; C(x,y)\;K_{\nu(x,y)}(\kappa r)\;r^{\nu(x,y)},\qquad r:=|x-y| .
\]
We split the domain $[0,1]^2$ into two triangles and rewrite the integral above as
\begin{align*}
\mathcal{A}^2_{\tau,\tau'}(\psi_i,\psi_j)
&=|J_i||J_j|\int_0^1\int_0^{\hat x}
\Delta\psi_i(\hat x,\hat y)\Delta\psi_j(\hat x,\hat y)\;\frac{\Phi(T_i(\hat x),T_j(\hat y))}{r(T_i(\hat{x}),T_j(\hat{y}))^{2\nu}}\,d\hat y\,d\hat x\\
&\quad + |J_i||J_j|\int_0^1\int_0^{\hat y}
\Delta\psi_i(\hat x,\hat y)\Delta\psi_j(\hat x,\hat y)\;\frac{\Phi(T_i(\hat x),T_j(\hat y))}{r(T_i(\hat{x}),T_j(\hat{y}))^{2\nu}}d\hat x\,d\hat y \\
&=: \mathcal{A}^{2,T_1}_{\tau,\tau'}(\psi_i,\psi_j) + \mathcal{A}^{2,T_2}_{\tau,\tau'}(\psi_i,\psi_j),
\end{align*}
with $\Delta\psi_i(\hat x,\hat y):=(\psi_i(T_i(\hat x))-\psi_i(T_j(\hat y)))$
and $
\Delta\psi_j(\hat x,\hat y):=(\psi_j(T_i(\hat x))-\psi_j(T_j(\hat y)))$.
The singularity arises when $|T_i(\hat{x}) - T_j(\hat{y})| \to 0$, which occurs in two main scenarios.
\begin{itemize}
  \item[(i)] $\tau$ and $\tau'$ share a vertex, e.g., $\tau = [x_i, x_{i+1}]$ and $\tau' = [x_{i+1}, x_{i+2}]$.
  \item[(ii)] $\tau = \tau'$, i.e., the integrals are computed over the same element.
\end{itemize}
For case (i), we define affine maps such that $T_i(0) = T_{i+1}(0)$
\begin{align*}
T_i(\hat{x}) = x_{i+1} + (x_i - x_{i+1})\hat{x}, \quad T_{i+1}(\hat{y}) = x_{i+1} + (x_{i+2} - x_{i+1})\hat{y}.
\end{align*}
To resolve the singularity, we apply a Duffy-type transformation, a classical tool in the boundary element literature \cite{Sauter2011,Delia2021}; see also \cite{Acosta2017,Ainsworth2017,Ainsworth2018} for similar treatments in the constant-fractional setting. After transformation, the integral is approximated using tensor-product Gaussian quadrature.
Assuming for simplicity a uniform mesh with size $h$, then $|T_{i+1}(\hat{y}) - T_i(\hat{x})| =  h |\hat{y}+\hat{x}|$. Using a Duffy-type transformation $\hat{x} = \xi, \quad \hat{y} = \xi \eta$, with $d\hat{x} \, d\hat{y} = \xi \, d\eta \, d\xi$, which maps the first triangular domain $T_1=\{(\hat{x}, \hat{y}) \in [0,1]^2 : \hat{y} \leq \hat{x}\}$ to the unit square $[0,1]^2$,
the $\mathcal{A}^{2,T_1}_{\tau,\tau'}(\psi_i,\psi_j)$ part becomes
\begin{align*}
\mathcal{A}^{2,T_1}_{\tau,\tau'}(\psi_i,\psi_j)
=h^2
\int_0^1\int_0^1
\Delta\psi_i\big(T_i(\xi),T_j(\xi\eta)\big)\,
\Delta\psi_j\big(T_i(\xi),T_j(\xi\eta)\big)\;
\frac{\Phi\big(T_i(\xi),T_j(\xi\eta)\big)}{r\big(T_i(\xi),T_j(\xi\eta)\big)^{2\nu}}\;
\xi\,d\eta\,d\xi.
\end{align*}
The second part $\mathcal{A}^{2,T_2}_{\tau,\tau'}(\psi_i,\psi_j)$ is treated similarly using $\hat{x} = \xi \eta, \quad \hat{y} = \xi $, with $d\hat{x} \, d\hat{y} = \xi \, d\eta \, d\xi$.

For linear reference shape functions $\hat{\psi}^1(z) =1 - z$ and $\hat{\psi}^{2}(z)=z$, we have $\Delta\hat{\psi}_i^l(\xi,\xi\eta)\Delta\hat{\psi}_j^m(\xi,\xi\eta) = \pm \xi^2(1-\eta)^2$, $l,m\in\{1,2\}$ and $r\big(T_i(\xi), T_j(\xi\eta)\big)=h\xi(1+\eta)$, therefore, the integral becomes
\begin{align*}
    \mathcal{A}^{2,T_1}_{\tau,\tau'}(\psi_i,\psi_j)
=\pm
\int_0^1\int_0^1 h^{2-2\nu}  \xi^{3-2\nu}(1-\eta)^{2} (1+\eta)^{-2\nu}
\Phi\big(T_i(\xi),T_{i+1}(\xi\eta)\big)\,d\eta\,d\xi.
\end{align*}
However, to improve the quadrature approximation rates, we apply the additional changes of variables $
 \xi = \zeta^{1/(3 - 2\overline{s})}$. 
This transformation smooths the integrand further, leading to
\begin{align}
\label{eq:transformed-integrand_adj}
\mathcal{A}^{2,T_1}_{\tau,\tau'}(\psi_i,\psi_j)
&=
\frac{\pm 1}{3-2\overline s}
\int_0^1\int_0^1
h^{1-2\beta}\zeta^{\frac{2(\overline s-\beta)}{3-2\overline s}}
(1-\eta)^2(1+\eta)^{-(1+2\beta)}
\\
&\qquad \qquad\qquad\times
\Phi\!\Big(
T_i\!\big(\zeta^{\frac{1}{3-2\overline s}}\big),
T_{i+1}\!\big(\zeta^{\frac{1}{3-2\overline s}}\eta\big)
\Big)
\,d\eta\,d\zeta . \nonumber
\end{align}
with nonnegative powers in $\zeta$ and $t$. The resulting integral is then approximated using a tensor-product Gaussian quadrature rule, which is highly efficient for smooth integrands on rectangular domains (see, e.g., \cite{Quarteroni2007}). Case (ii), when $\tau = \tau'$, is handled similarly. In this case, $|T_{i}(\hat{y}) - T_i(\hat{x})| =  h |\hat{y}-\hat{x}|$. An extra change of variables $\eta = 1 - t^{1/(2 - 2\overline{s})}$ is used to get
\begin{align} \label{eq:transformed-integrand_ident} \mathcal{A}^{2,T_1}_{\tau,\tau'}(\psi_i,\psi_j) &= \frac{\pm 1}{(2-2\overline{s})(3 - 2\overline{s})} \int_0^1\int_0^1 h^{1 - 2\beta} \zeta^{\frac{2\overline{s} - 2\beta}{3 - 2\overline{s}}} t^{\frac{2\overline{s} - 2\beta}{2 - 2\overline{s}}} \\
&\qquad \qquad \qquad \qquad \qquad \times \Phi\Big(T_i(\zeta^{\frac{1}{3-2\overline s}}),T_{i}(\zeta^{\frac{1}{3-2\overline s}}(1-t^{\frac{1}{2-2\overline s}})))\Big) \, d\zeta \, dt. \nonumber
\end{align}

Using the argument from \cite[Section~4]{Lei2023}, we can derive the following local quadrature error and a detailed proof is provided in Appendix.

\begin{proposition}\label{prop:local-quad-error}
  \added[id=rev]{Assume that $s$ is real-analytic on $\overline{\mathcal G}$, admitting a bounded analytic extension to a complex neighborhood of $\overline{\mathcal G}$.} Denote $Q_{\tau,\tau'}^n$ to be the $n$th-order tensor-product Gaussian quadrature approximation of $\mathcal A^2_{\tau,\tau'}(\psi_i,\psi_j)$ based on the form \eqref{eq:transformed-integrand_adj} or \eqref{eq:transformed-integrand_ident}. Then there exist $\rho\in (\tfrac12,1)$ and a positive constant $C$ not depending on $h$ satisfying
$|\mathcal A^2_{\tau,\tau'}(\psi_i,\psi_j) - Q_{\tau,\tau'}^n| \le C h^{1-2\overline s}(2\rho)^{-2n} .$
\end{proposition}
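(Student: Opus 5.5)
The plan is to follow the analytic-continuation argument of \cite[Section~4]{Lei2023}: write each transformed integrand in \eqref{eq:transformed-integrand_adj} and \eqref{eq:transformed-integrand_ident} as an explicit $h$-dependent prefactor times a function that is analytic in each reference variable on a Bernstein ellipse around $[0,1]$. Then apply the classical one-dimensional Gauss estimate $|\int_0^1 g-Q^n g|\le C\,\rho_E^{-2n}\sup_{E_{\rho_E}}|g|$ in each variable of the tensor rule, and sum the two one-dimensional errors.

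\textbf{Step 1: factor out the $h$-dependence.} In both forms the integrand is $h^{1-2\beta(x,y)}$ times a product of three kinds of factors: powers $\zeta^{2(\overline s-\beta)/(3-2\overline s)}$ and $t^{(2\overline s-2\beta)/(2-2\overline s)}$, polynomial or rational factors such as $(1-\eta)^2(1+\eta)^{-(1+2\beta)}$, and $\Phi$. Since $\beta\ge\underline s$ and $h\le1$, $h^{1-2\beta}\le h^{1-2\overline s}$, which gives the prefactor in the claim. Because $\beta$ varies with $(x,y)$, I will write $h^{1-2\beta}=h^{1-2\overline s}\,h^{2(\overline s-\beta)}$. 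The log-H\"older/analytic assumption on $s$ implies $|\beta(x,y)-\beta(x_i,x_i)|\lesssim h$ on the element pair, so $h^{2(\overline s-\beta)}=\exp(2(\overline s-\beta)\log h)$ stays bounded uniformly in $h$ on a complex neighbourhood.

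\textbf{Step 2: analyticity of $\Phi$.} Use the series representation $K_\nu(z)z^\nu=\tfrac12\Gamma(\nu)2^{\nu}\big(1+\dots\big)+\tfrac12\Gamma(-\nu)2^{-\nu}z^{2\nu}(\dots)$ for non-integer $\nu\in(\tfrac12,\tfrac32)$. This shows that $r\mapsto K_\nu(\kappa r)r^\nu$ is an entire function of $r$ plus $r^{2\nu}$ times an entire function. Along the Duffy rays $r=h\xi(1+\eta)$ or $r=h\xi(1-\eta)$, with $\xi=\zeta^{1/(3-2\overline s)}$, the term $r^{2\nu}$ combines with the singular power already extracted. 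The outcome is an integrand of the form $\zeta^{a}\cdot(\text{analytic})$ with $a\ge0$, and the substitution $\zeta\mapsto\zeta^{1/(3-2\overline s)}$ is designed so that the leading exponent is smooth. The remaining issue is that non-integer powers $\zeta^{2(\overline s-\beta)/(3-2\overline s)}$ are analytic only away from $\zeta=0$.

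\textbf{Step 3: the endpoint singularity, which is the main obstacle.} I would handle it as Lei does. With $\beta$ depending analytically on $(\zeta,\eta)$ through $s$, write $\zeta^{c(\beta)}=\exp(c(\beta)\log\zeta)$ and bound the Gauss error using the ellipse parameter determined by the distance to the branch point. This yields a rate $(2\rho)^{-2n}$ with $\rho\in(\tfrac12,1)$, rather than a fully geometric rate in a large ellipse. Alternatively, one can expand in powers of $(\overline s-\beta)$ and use that $c(\beta)=O(\overline s-\beta)$ is small near the diagonal.

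\textbf{Step 4: combine.} Finally, combine the two one-dimensional Gauss errors in $\zeta$ and $\eta$ (or $t$), with the sup of the other variable controlled. Add the analogous contribution from the triangle $T_2$ to obtain $C h^{1-2\overline s}(2\rho)^{-2n}$, with $C$ depending on $\kappa$, $\underline s$, $\overline s$ and the analytic extension of $s$ but not on $h$.
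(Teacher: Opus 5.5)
\textbf{Step~3 has a genuine gap, and it cannot be closed within this route.}

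Your framework is the paper's own. You continue the Duffy-transformed integrand analytically in each reference variable to an ellipse $\mathcal E_\rho$ with foci $0,1$, apply the one-dimensional Gauss estimate of \cite[Theorem~5.3.15]{Sauter2011} variable by variable, and use $h^{1-2\beta}\le h^{1-2\overline s}$. That inequality holds because $\beta\le\overline s$, not because $\beta\ge\underline s$. Your remark that $\beta$ moves only by $O(h)$ on the complexified element pair, so $|h^{2(\overline s-\beta)}|$ stays bounded, is actually more careful than the paper's unjustified sup bound.

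The failure is this. The branch point $\zeta=0$ (resp.\ $t=0$) is an endpoint of the integration interval, hence a focus of every $\mathcal E_\rho$ with $\rho>\tfrac12$. So there is no nondegenerate ellipse on which the integrand is analytic. ``The ellipse parameter determined by the distance to the branch point'' is the degenerate value $\rho=\tfrac12$, i.e.\ $2\rho=1$, which gives no decay.

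This obstruction is real, not an artifact of the method. Near $\zeta=0$ the adjacent-element integrand behaves like a positive multiple of $h^{1-2s(x_{i+1})}\zeta^{a_0}$, with $a_0=2(\overline s-s(x_{i+1}))/(3-2\overline s)$, which is generically positive and non-integer. Gauss--Legendre applied to $\zeta^{a_0}$ on $[0,1]$ converges only algebraically, like $n^{-2a_0-2}$. Hence no bound $C(2\rho)^{-2n}$ with $2\rho>1$ and $C$ independent of $n$ can hold for such element pairs.

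Your fallback fails as well. $\overline s-\beta$ is of order one on elements away from the maximizer of $s$, so it is not small near the diagonal. Writing $\zeta^{c}=e^{c\log\zeta}$ only trades the power for $\log^k\zeta$ terms, which again give algebraic convergence.

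Step~2 also overstates what the last substitution achieves. With $\xi=\zeta^{1/(3-2\overline s)}$, the regular part of $\Phi$ (entire in $r^2\propto\xi^2$) and the composite order $\beta(T_i(\xi),T_{i+1}(\xi\eta))$ become functions of $\zeta^{1/(3-2\overline s)}$. Fractional powers such as $h^2\zeta^{2/(3-2\overline s)}$ therefore survive even for constant $s$. Moreover, $\nu=\tfrac12+\beta$ equals $1$ when $\beta=\tfrac12$, and there the expansion of $K_\nu$ contains $z^2\log z$ terms.

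The paper's proof treats the branch point exactly as you propose. It passes to a ``shrunken confocal ellipse'' $\mathcal E^\delta_\rho$ whose closure avoids $(-\infty,0]$, but no such ellipse exists, for the same reason: every ellipse confocal with $\mathcal E_\rho$ contains $0$. So matching the paper here does not close the gap.

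What this route can honestly deliver is one of two things:
\begin{itemize}
\item an algebraic bound of the form $Ch^{1-2\overline s}n^{-q}$, with $q>0$ governed by the endpoint exponents; or
\item exponential convergence, but only after replacing plain tensor Gauss by a composite rule geometrically graded toward $\zeta=0$ and $t=0$, with a correspondingly modified statement.
\end{itemize}
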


\begin{revblock}
\begin{remark}\label{rem:quad_analytic}
The analyticity of $s$ is needed only for the exponential quadrature rate; it plays no role in the variational theory of \autoref{sec:weak_form}, which requires log-H\"older continuity. Note also that the exponents of $\zeta$ and $t$ in \eqref{eq:transformed-integrand_adj} and \eqref{eq:transformed-integrand_ident} are nonnegative but in general noninteger, so the integrand has a branch point at the origin and does not extend analytically to any Bernstein ellipse containing $0$. The rate $(2\rho)^{-2n}$ therefore holds for $\rho$ restricted by the branch point, which limits the achievable $\rho$ but not the geometric character of the convergence; see the proof in the appendix.
\end{remark}
\end{revblock}

\begin{corollary}
  Summing up the above local errors for all $\tau$ and $\tau'$, \replaced[id=rev]{the quadrature error for}{we can show that the quadrature error for} \eqref{eq:A2} \replaced[id=rev]{is}{can be} bounded by $Ch^{-2\overline s}(2\rho)^{-2n}$ (\replaced[id=rev]{see}{e.g.} \cite[Proposition~1]{Lei2023}). Moreover, given the mesh size $h< 1$, let the quadrature order $n \ge c \log(1/h) (2\overline s + 2\underline s-d/2)$ for some fixed constant $c$. The quadrature error for \eqref{eq:A2} can be further bounded by $Ch^{2\underline s-d/2}$. This implies that the error from the finite element approximation is dominant. 
\end{corollary}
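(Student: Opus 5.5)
The plan is to follow the template of \cite[Proposition~1]{Lei2023}. First I would bound the number of singular pairs, then combine this with the local estimate of \sref{prop:local-quad-error}, and finally choose $n$.

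\textbf{Counting and summation.} In the sum \eqref{eq:A2} only pairs $(\tau,\tau')$ with $\tau\cap\tau'\neq\emptyset$ occur, i.e.\ identical or vertex-sharing elements. On a shape-regular mesh of the bounded set $\mathcal G\subset\mathbb R$ there are $\mathcal O(h^{-1})$ elements. Each has at most a fixed number of neighbours, so there are $\mathcal O(h^{-1})$ such pairs. For each pair and each of the finitely many local basis combinations, \sref{prop:local-quad-error} gives the error bound $Ch^{1-2\overline s}(2\rho)^{-2n}$, with $C$ and $\rho$ uniform in $h$. Uniformity holds because, after the Duffy map and the $\zeta$-substitution, the only $h$-dependence is the factor $h^{1-2\beta}\le h^{1-2\overline s}$ for $h<1$; the regularized $\Phi$ stays in a fixed analyticity class. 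Summing gives
\begin{align*}
\Big|\mathcal B(U,V)-\sum Q^n_{\tau,\tau'}\Big|\le C h^{-1}h^{1-2\overline s}(2\rho)^{-2n}=Ch^{-2\overline s}(2\rho)^{-2n}
\end{align*}
entrywise. For bounded coefficient vectors this gives the same bound on the form after taking the finite local combinations.

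\textbf{Choice of $n$.} Since $2\rho>1$, we have $(2\rho)^{-2n}=\exp(-2n\log(2\rho))$. We need $h^{-2\overline s}(2\rho)^{-2n}\le h^{2\underline s-d/2}$, i.e.\ $2n\log(2\rho)\ge(2\overline s+2\underline s-d/2)\log(1/h)$. This holds for $n\ge c\log(1/h)(2\overline s+2\underline s-d/2)$ with $c=1/(2\log(2\rho))$, and the total quadrature error is then $\le Ch^{2\underline s-d/2}$. This is at most a constant times the rate $h^{r}$ with $r<r_\star\le 2\underline s-d/2$ in \sref{prop:FEM_MS_rate}, so the finite element error dominates.

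\textbf{Main obstacle.} The delicate step is justifying that the local constants $C,\rho$ in \sref{prop:local-quad-error} are uniform over all pairs and all $h$. This requires the analytic extension of $\Phi(T_i(\cdot),T_j(\cdot))$ to a fixed Bernstein-type region. That in turn needs the analytic extension of $s$, and hence of $\beta$, $\nu$ and $1/\Gamma(-\beta)$, and of $K_\nu(\kappa r)r^\nu$ as an entire-type function of $r^2$ times $\log$-free terms, on a neighbourhood whose size does not shrink with $h$. I would check that $r=h\xi(1+\eta)$ ranges over a set that only shrinks, so analyticity domains in the reference variables only grow. Perturbing $\beta$ across elements also stays inside the compact set $[\underline s,\overline s]$. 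The remaining step is converting the entrywise bound to a bound on the assembled form, which is a routine finite-overlap argument.
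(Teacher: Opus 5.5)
Your argument is correct and is essentially the paper's: count the $\mathcal O(h^{-1})$ touching pairs in \eqref{eq:A2}, multiply by the local bound $Ch^{1-2\overline s}(2\rho)^{-2n}$ of \sref{prop:local-quad-error} to get $Ch^{-2\overline s}(2\rho)^{-2n}$, then take $n\ge \log(1/h)\,(2\overline s+2\underline s-d/2)/(2\log(2\rho))$, which also fixes the constant $c$. Three small corrections. First, the $\mathcal O(h^{-1})$ count relies on the uniform mesh assumed in \autoref{subsec:singular_integrals}, not on shape regularity, which is vacuous in one dimension. Second, the factor $h^{-1}$ arises only for the assembled form with coefficient vectors bounded in $\ell_\infty$, not entrywise, since each matrix entry involves only $\mathcal O(1)$ pairs. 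Third, your aside that $K_\nu(\kappa r)r^\nu$ is entire in $r^2$ up to ``log-free'' terms is inaccurate: it contains an $r^{2\nu}$ branch term, and logarithms when $\nu$ is an integer. This only concerns the uniform local constants, which \sref{prop:local-quad-error} already supplies.
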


If the finite element approximation error has the rate $O(h^{r})$, we can choose $n \ge C\log(1/h)( r +  2\overline s)$ so that the finite element error is always dominant.

\subsection{Sampling procedure}
\label{subsec:sampling}
By the definition of white noise in \autoref{sec:const_smooth}, the random vector $b$, whose components are given by $b_i = \left\langle \mathcal{W}, \psi_i \right\rangle_{L_2(\mathcal{D})}$ with $\psi_i$ denoting a finite element basis function, satisfies $\mathbb{E}[b_i] = 0$ and $\mathbb{E}[b_ib_j]=\left\langle\psi_i,\psi_j\right\rangle_{L_2(\mathcal{D})}$.
In other words, $b \sim \mathcal{N}(0, M)$, where $M$ is the finite element mass matrix with entries $M_{ij} = \left\langle\psi_i,\psi_j\right\rangle_{L_2(\mathcal{D})}$. This matrix needs to be assembled only once. \added[id=rev]{Here the $L_2(\mathcal D)$ inner products coincide with the $L_2(\mathcal G)$ ones. The basis functions satisfy $\psi_i=0$ on $\mathcal D_t^c$ since $\mathbb V_{\kappa,s}(\mathcal T)\subset\mathbb V_{\kappa,s}$, so $\langle\psi_i,\psi_j\rangle_{L_2(\mathcal D)}=\langle\psi_i,\psi_j\rangle_{L_2(\mathcal G)}$, and the action of $\mathcal W$ on $\psi_i$ is the white noise on $\mathcal G$ tested against $\psi_i$. The sampling problem written with $L_2(\mathcal D)$ inner products is therefore identical to the restriction of the weak form on $\mathcal G$ to $\mathbb V_{\kappa,s}(\mathcal T)$, and no approximation is involved in using $\mathcal D$ instead of $\mathcal G$ here.} A Cholesky factorization of $M$ yields $M = LL^T$, where $L \in \mathbb{R}^{N \times N}$ is lower triangular.

Once the global stiffness matrix $A=A^1+A^2$ (where $A^1$ and $A^2$ are assembled from the local contributions in \eqref{eq:loc-mass} and \eqref{eq:loc-stiff}, respectively), are available, one-dimensional Gaussian random fields can be generated efficiently. Let $m \in \mathbb{N}$ be the desired number of samples. For each sample, one draws a standard normal vector $z\sim \mathcal{N}(0, I)$, computes the white noise realization via $b = L z$, and solves the linear system $A u = b$ to obtain the sampled field $u$. The assembly of the mass and stiffness matrices $M$ and $A$ is performed in parallel over all elements $\tau$, enabling significantly faster computation. Furthermore, the sampling procedure can also be done in parallel.
%

%
%
%
%
%

\section{Numerical Illustrations}
\label{sec:numerics}

In this section, we present results on sampling Whittle–Matérn random fields with variable smoothness by solving the SPDE \eqref{eq:loc_frac_spde1} for various values of $\kappa$ and different choices of $s(x)$ in 1D. We also show some convergence results. We ran all computations in MATLAB on a Precision 7920 Tower Workstation equipped with 52 cores and 512 GB of memory.

\subsection{Sampling}
Let $\mathcal{D}=[-R_{int},R_{int}]$ denote the interior interval and $\mathcal{D}_t^c=[-R_{ext},R_{ext}]\setminus \mathcal{D}$ be the exterior interval. In this section, we take $R_{int}=3$ and $R_{ext}=4$. The parameters are fixed as $\mu=1$, number of samples $m=1000$, and discretization step-size $h=2^{-8}$ in the whole domain. We note that it is possible to employ a graded mesh in larger exterior domains $\mathcal D_t^c$. Across all cases below, assembling the global stiffness matrix required approximately 36 seconds (using all available cores), while generating $m = 1000$ samples took roughly 10 seconds; so in total the code took around 45 seconds to generate $1000$ samples. 

We first examine the constant-order case with $s = 0.5$. 
As shown in the top-left panel of \autoref{fig:cov_comp}, 
for sufficiently large values of $\kappa$ (in our experiments, $\kappa =2.5$), the covariance $C_s := A^{-1} M A^{-T}$ of the FEM solutions to the SPDE~\eqref{eq:loc_frac_spde} 
closely matches the Matérn covariance function. Next, we examine the variable case and consider three representative choices of $s(x)$; see \autoref{fig:1d_s_cases}.
{
\begin{align}
\qquad &\text{Case 1: Step function } s(x) =
\left\{\begin{array}{ll}
\underline{s}, & x \leq 0, \\
\overline{s}, & x > 0,
\end{array}\right.  \nonumber\\
& \text{Case 2: Gaussian bump } s(x) = \left\{\renewcommand\arraystretch{0.7}\begin{array}{ll}
\underline{s} 
+ (\overline{s}-\underline{s}) \frac{ \exp\{-(\frac{x}{\sigma})^2\} - \exp\left\{-(\frac{R_{int}}{\sigma})^2\right\} }{ 1 - \exp\left\{-(\frac{R_{int}}{\sigma})^2\right\} }, & |x| \le R_{int}, \\[1em]
\underline{s}, & |x| > R_{int}, 
\end{array}\right.
\label{eq:s_cases}\\
&\text{Case 3: Oscillatory Ramp } s(x) = \left\{\renewcommand\arraystretch{0.7}\begin{array}{ll}
 a, 
   & x \le -R_{int}, \\
 a
   + \frac{b-a}{2R_{int}}\,(x+R_{int})
   + \omega \sin\!\left(\frac{4 \pi\, (x+R_{int})}{R_{int}} \right), 
   & |x| < R_{int}, \\
 b, 
   & x \ge R_{int} \nonumber,
\end{array}\right.
\end{align}
}
\begin{figure}[!htbp] 
\begin{center} 
		\includegraphics[width=0.23\textwidth]{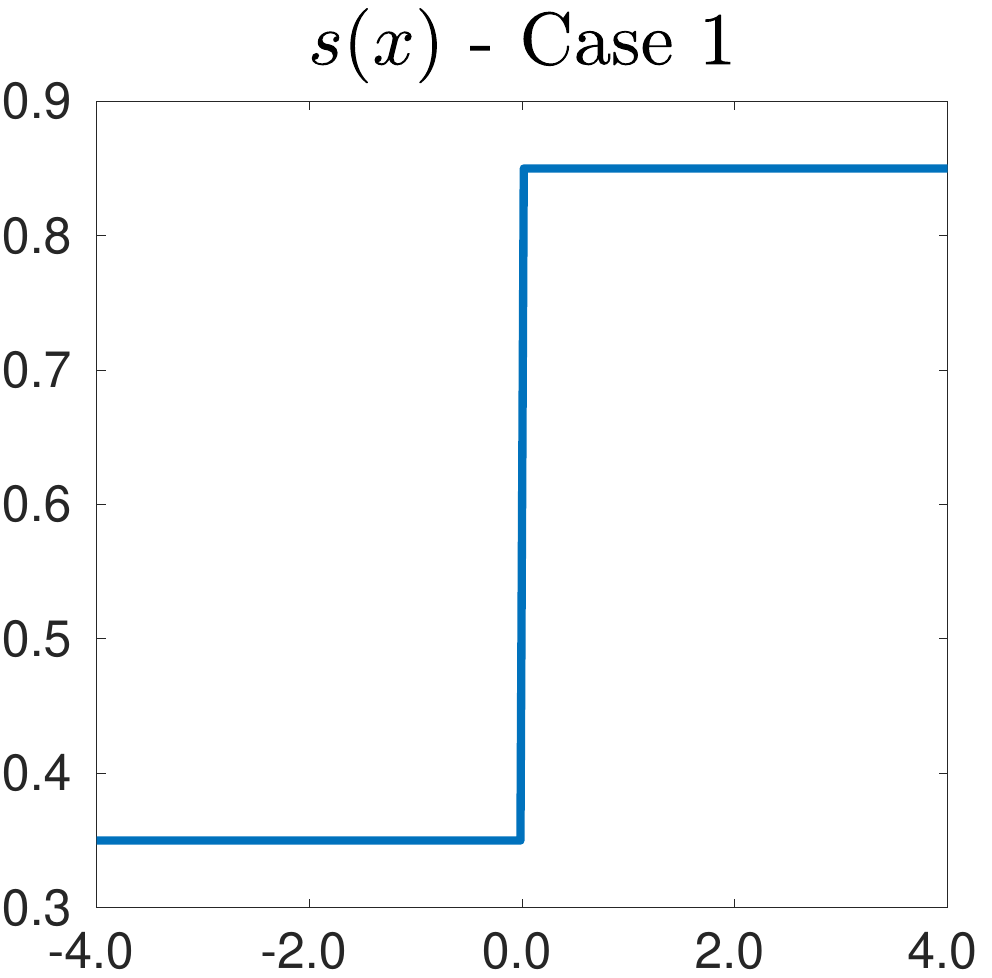} \quad \includegraphics[width=0.23\textwidth]{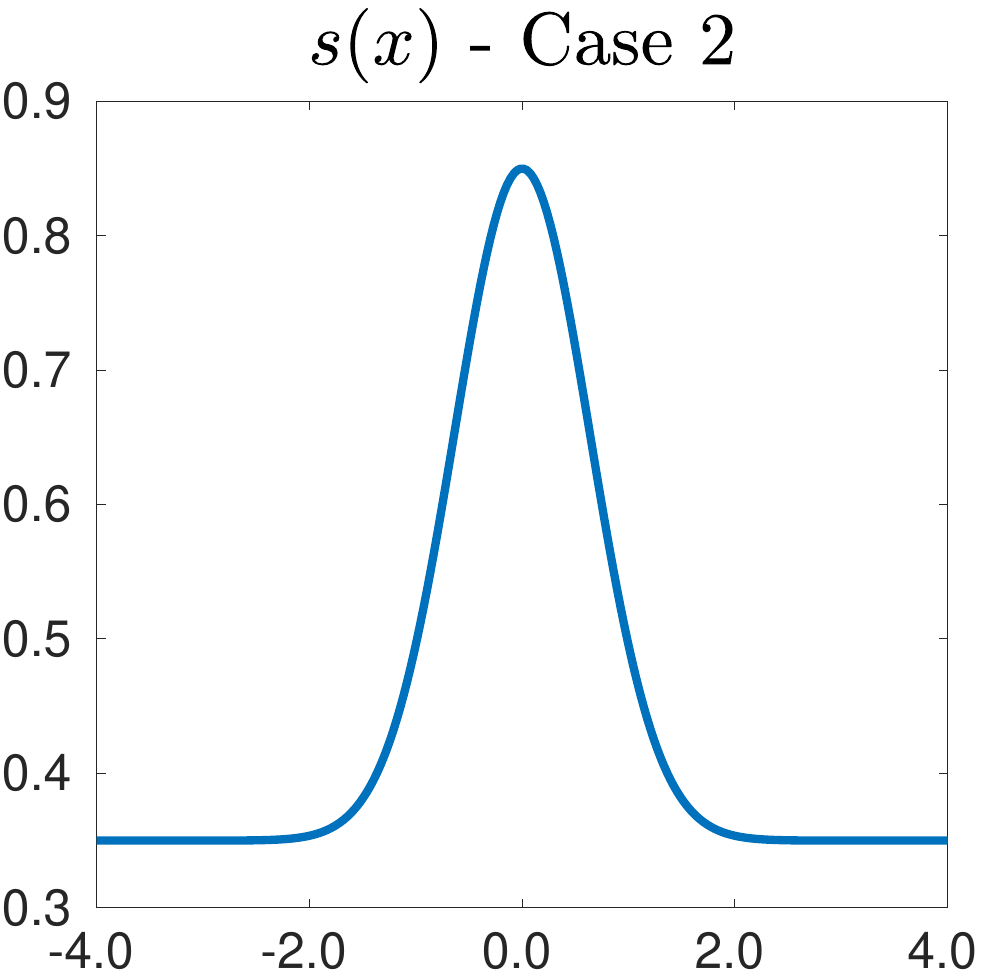} \quad \includegraphics[width=0.23\textwidth]{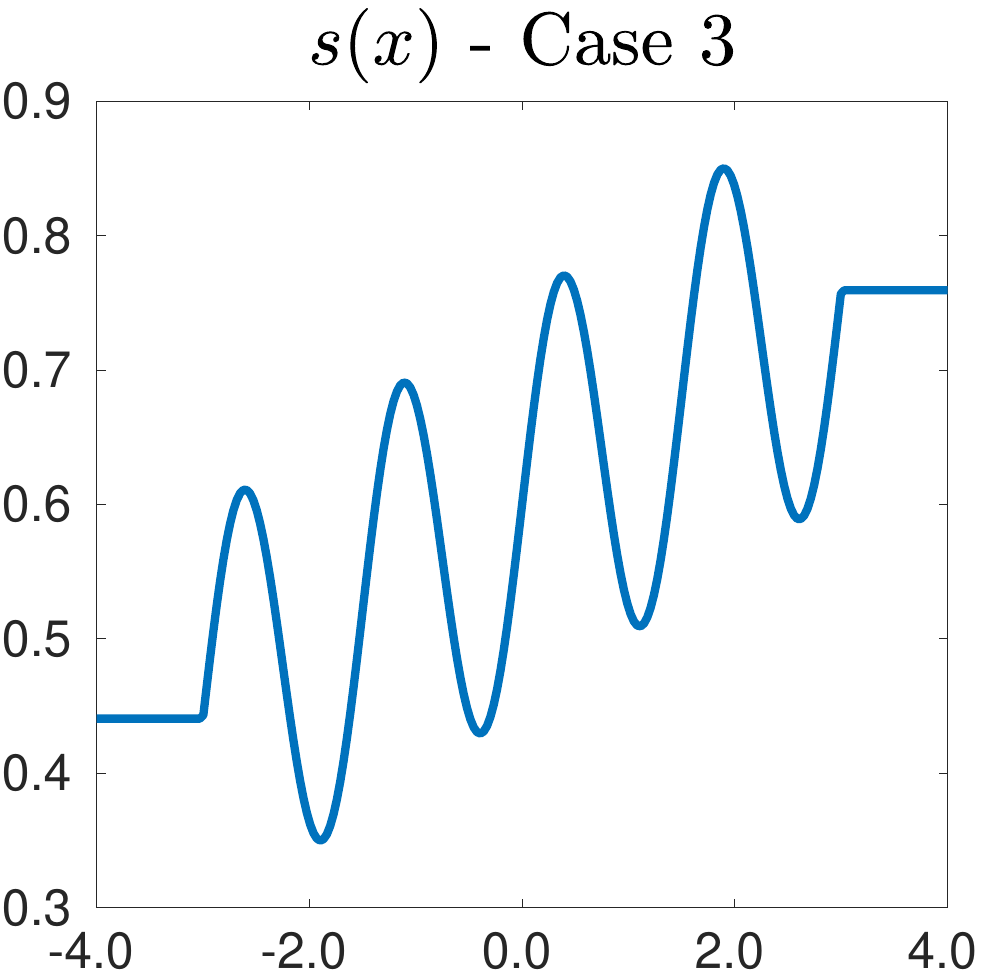}  
\end{center} 
	\caption{We consider three different functions of $s(x)$. The left, middle and right panels display a step function, a Gaussian bump and an oscillatory ramp, respectively.} \label{fig:1d_s_cases} 
\end{figure}
\begin{figure}[!htbp] 
\begin{center} 
	\includegraphics[width=0.48\textwidth]{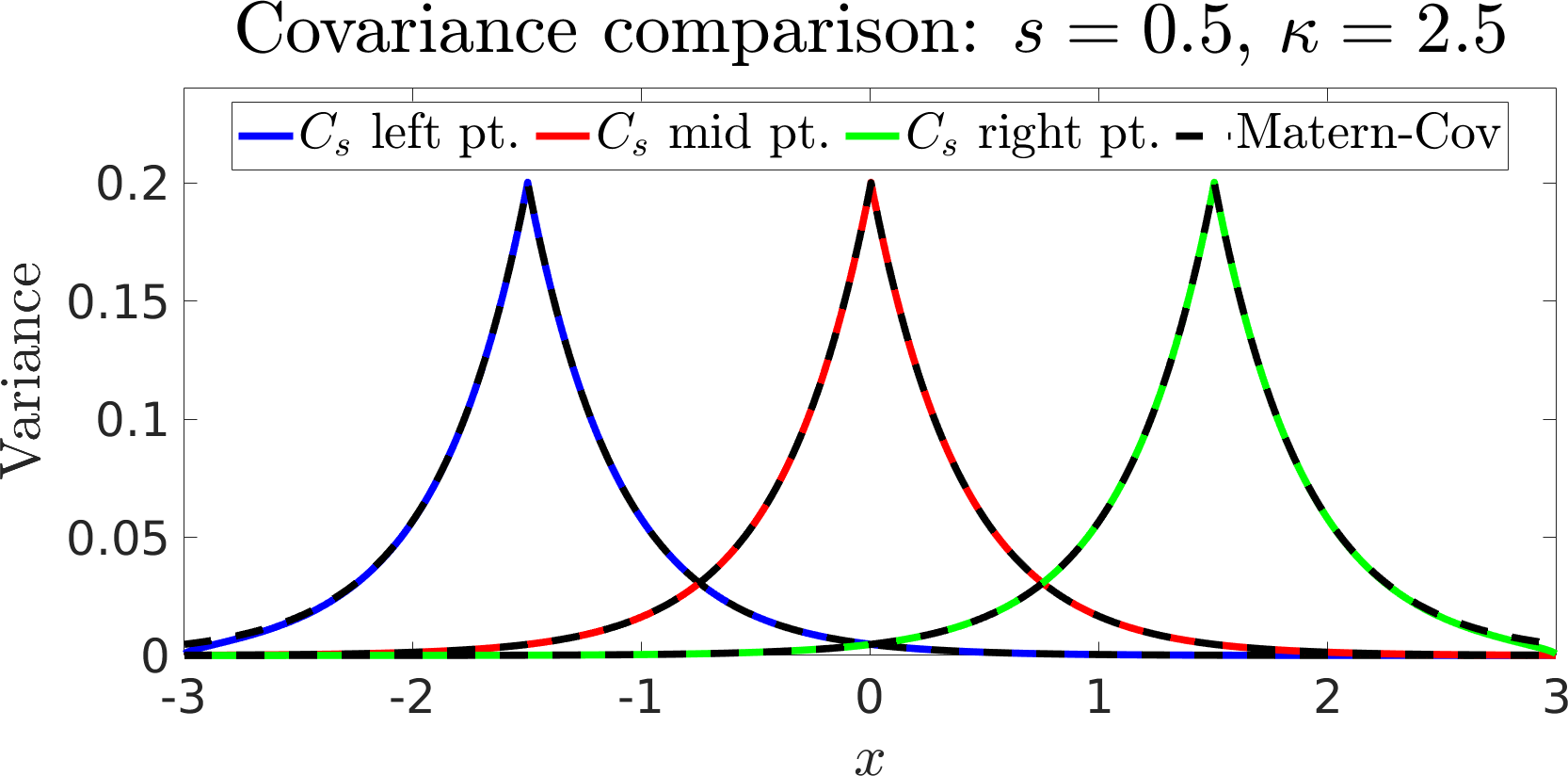} 
	\includegraphics[width=0.48\textwidth]{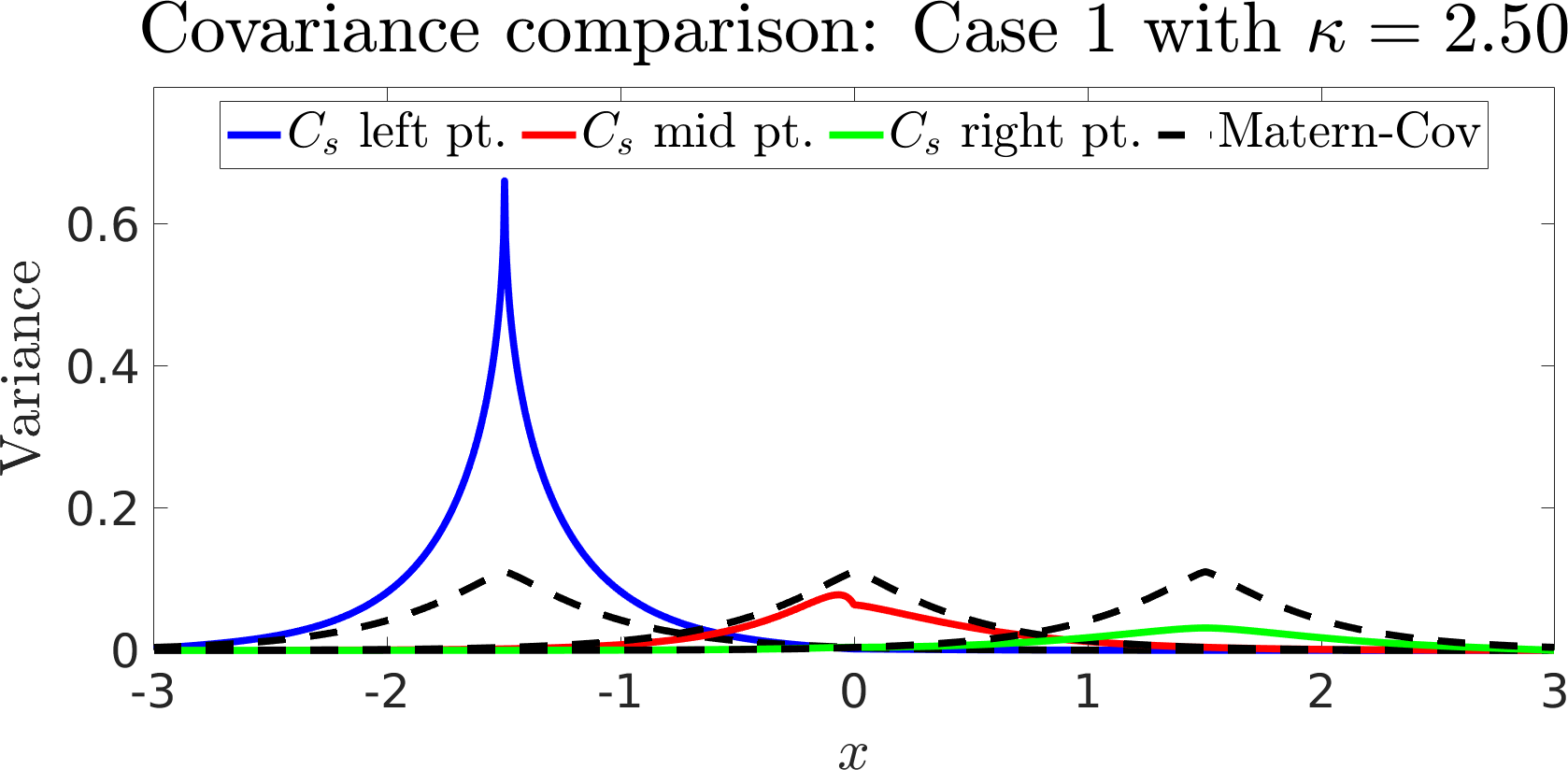}\\	\includegraphics[width=0.48\textwidth]{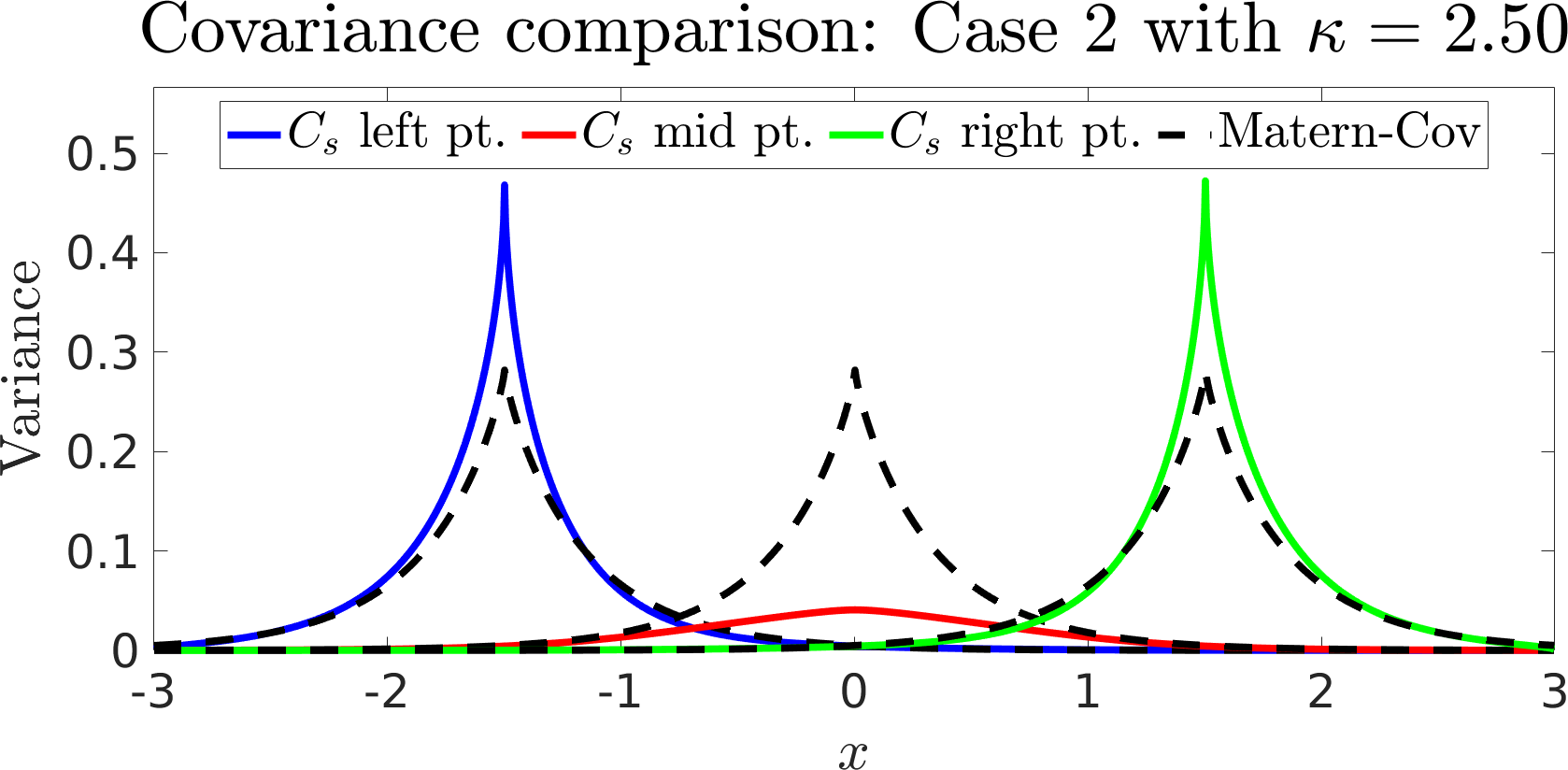}
	\includegraphics[width=0.48\textwidth]{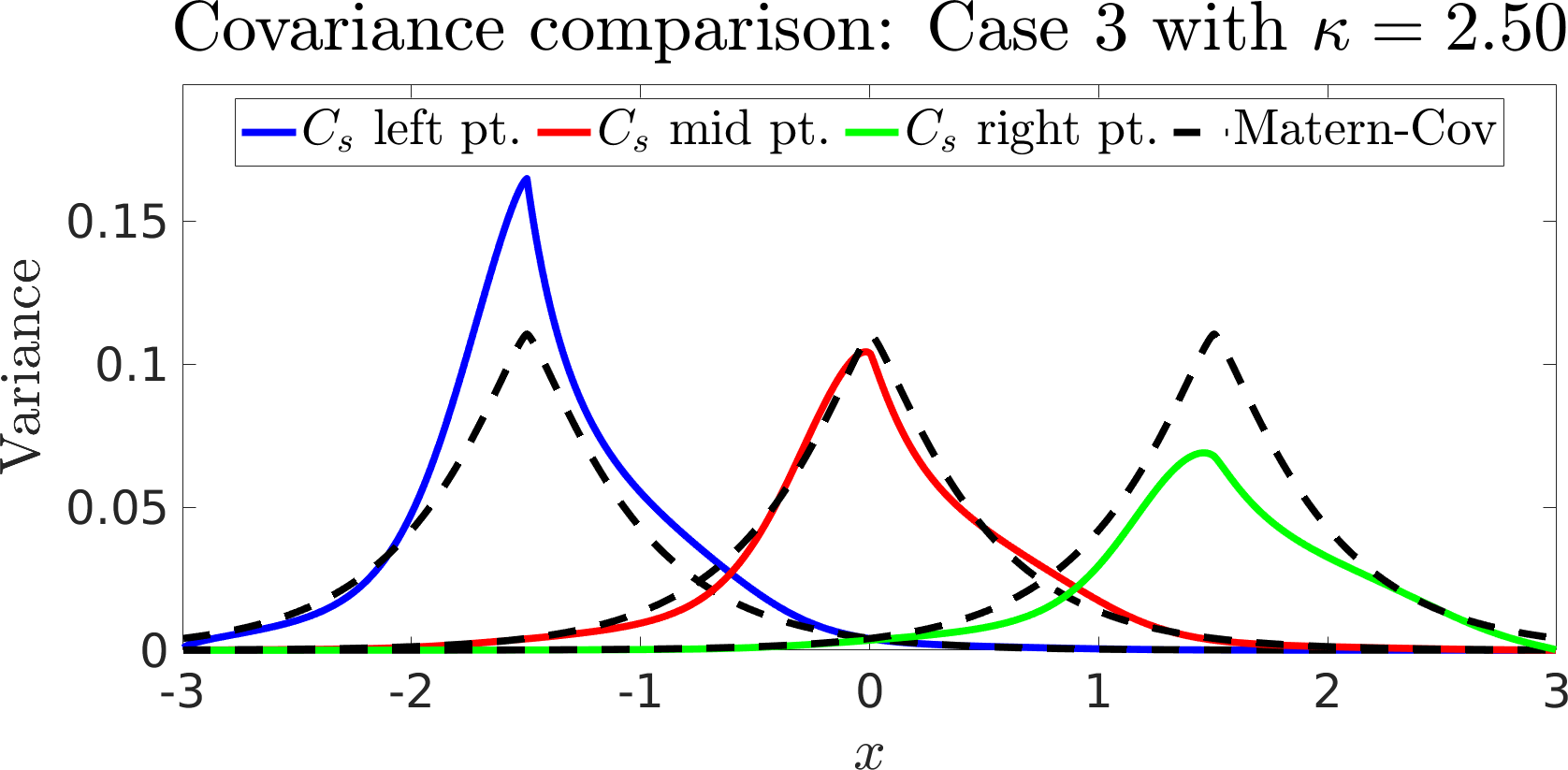}  
\end{center} 
	\caption{Comparison between the Mat\'{e}rn covariance and the FEM solution covariance $C_s = A^{-1} M A^{-T}$ with $\kappa = 2.5$. For the constant case (top left), the Mat\'{e}rn covariance is computed with $\nu=2s-1/2$ while for the variable cases Cases~1--3 it is computed with $\nu = 2\,\left\langle s(x)\right\rangle-d/2$.} \label{fig:cov_comp} 
\end{figure}
where we have $\underline s = 0.35$, $\overline{s}=0.85$, $\sigma = 0.3 R_{int}$, $a=0.44075$, $b=0.7594$ and $\omega=0.15$.

We plot the solution covariance $C_s$ at three different locations in the interior domain, namely, we plot $C_s(-1.5,\cdot)$, $C_s(0,\cdot)$ and $C_s(1.5,\cdot)$ and compare with the Mat\'ern covariance for $\kappa=2.5$. Since the smoothness here is varying in space, we compare with the Mat\'ern covariance computed with $\nu = 2\,\left\langle s(x)\right\rangle-d/2$, where $\left\langle s(x)\right\rangle = \frac{1}{|\mathcal G|} \int_{\mathcal{G}} s(x) dx
$. The comparison for Case 1 (the step function $s$) is shown in \autoref{fig:cov_comp} (top-right panel). We also plot 100 samples of the 1000 generated samples of the random Gaussian fields for different values of $\kappa$, namely 0.25 and 2.5; the results are shown in \autoref{fig:var_s_samples_k=2.5_and0.25} (top panel). \replaced[id=rev]{We note that $C_s = A^{-1} M A^{-T}$ is the exact covariance matrix of the discrete FEM solution rather than a Monte Carlo estimate, and it is exactly symmetric, since the stiffness matrix $A$ is symmetric positive definite and hence $A^{-T}=A^{-1}$; this mirrors the covariance symmetry of the continuous model established in \sref{rem:cov_symmetry}.}{We note that the samples covariance $C_s = A^{-1} M A^{-T}$ is symmetric, since the stiffness matrix $A$ is symmetric positive definite and hence $A^{-T}=A^{-1}$; this mirrors the exact covariance symmetry of the model, so the plotted covariances are symmetric up to Monte Carlo sampling error.}

\begin{figure}[!htbp] 
\begin{center} 
	\includegraphics[width=0.45\textwidth]{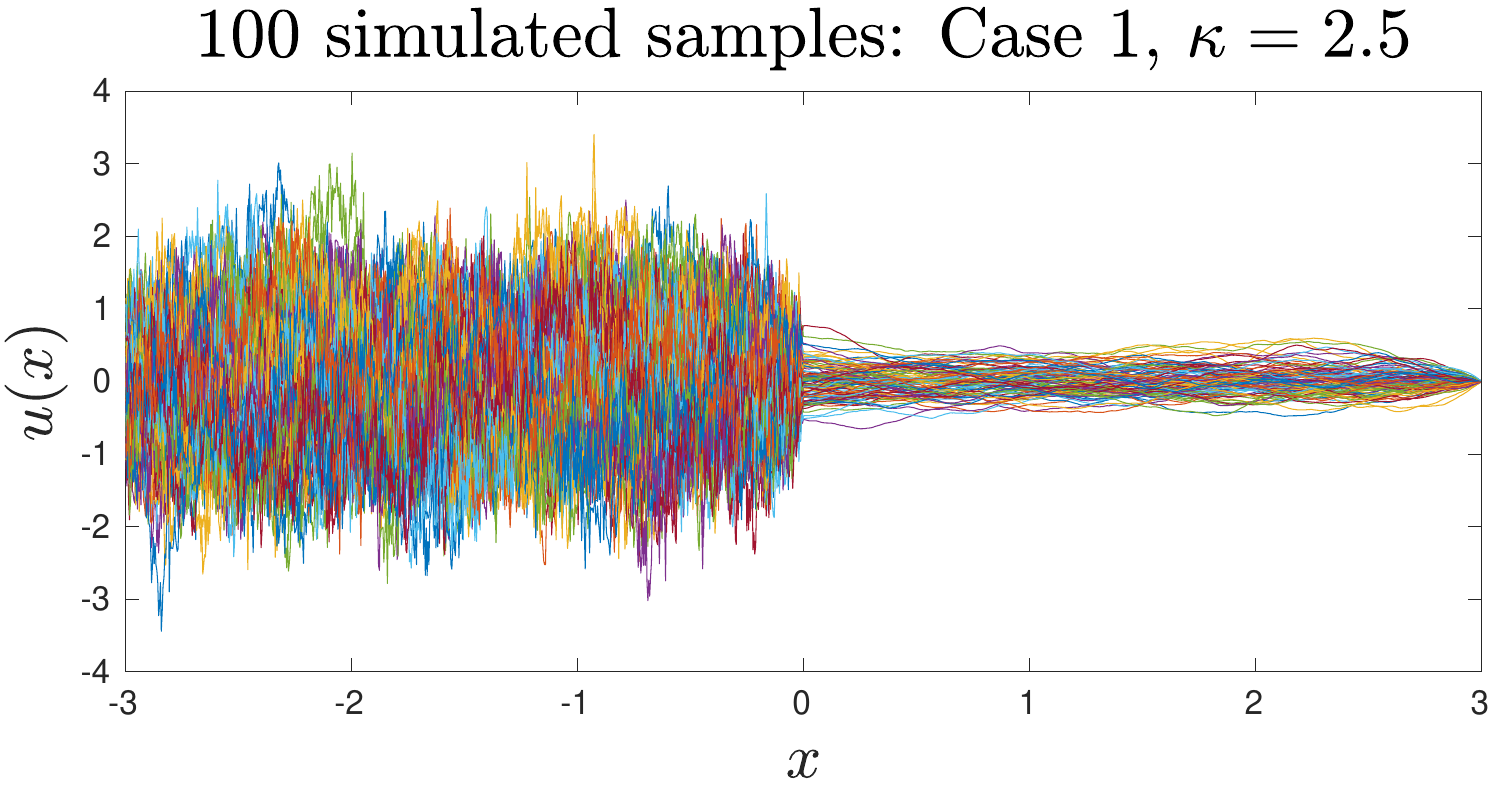} \includegraphics[width=0.45\textwidth]{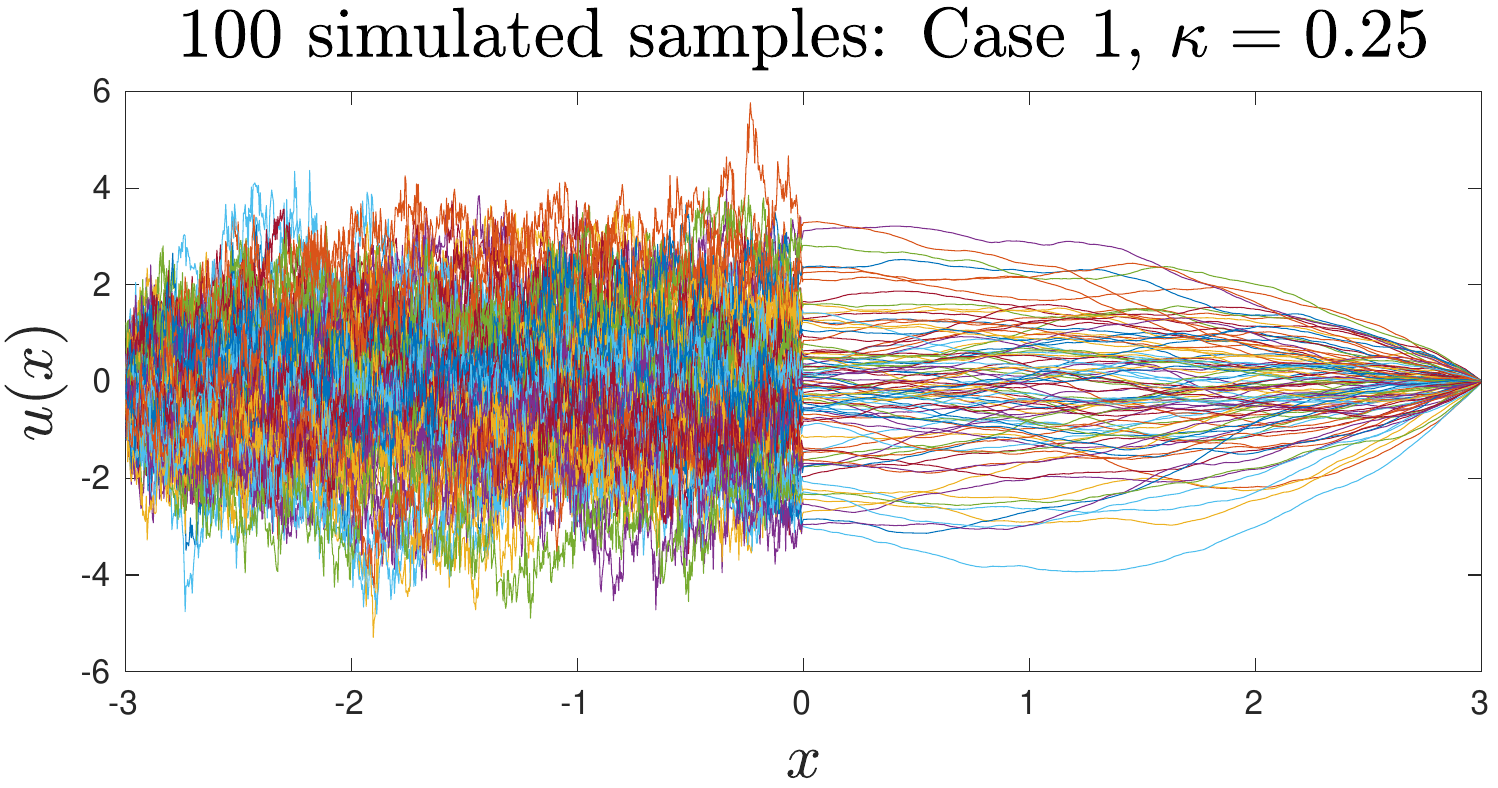} \\	
\includegraphics[width=0.45\textwidth]{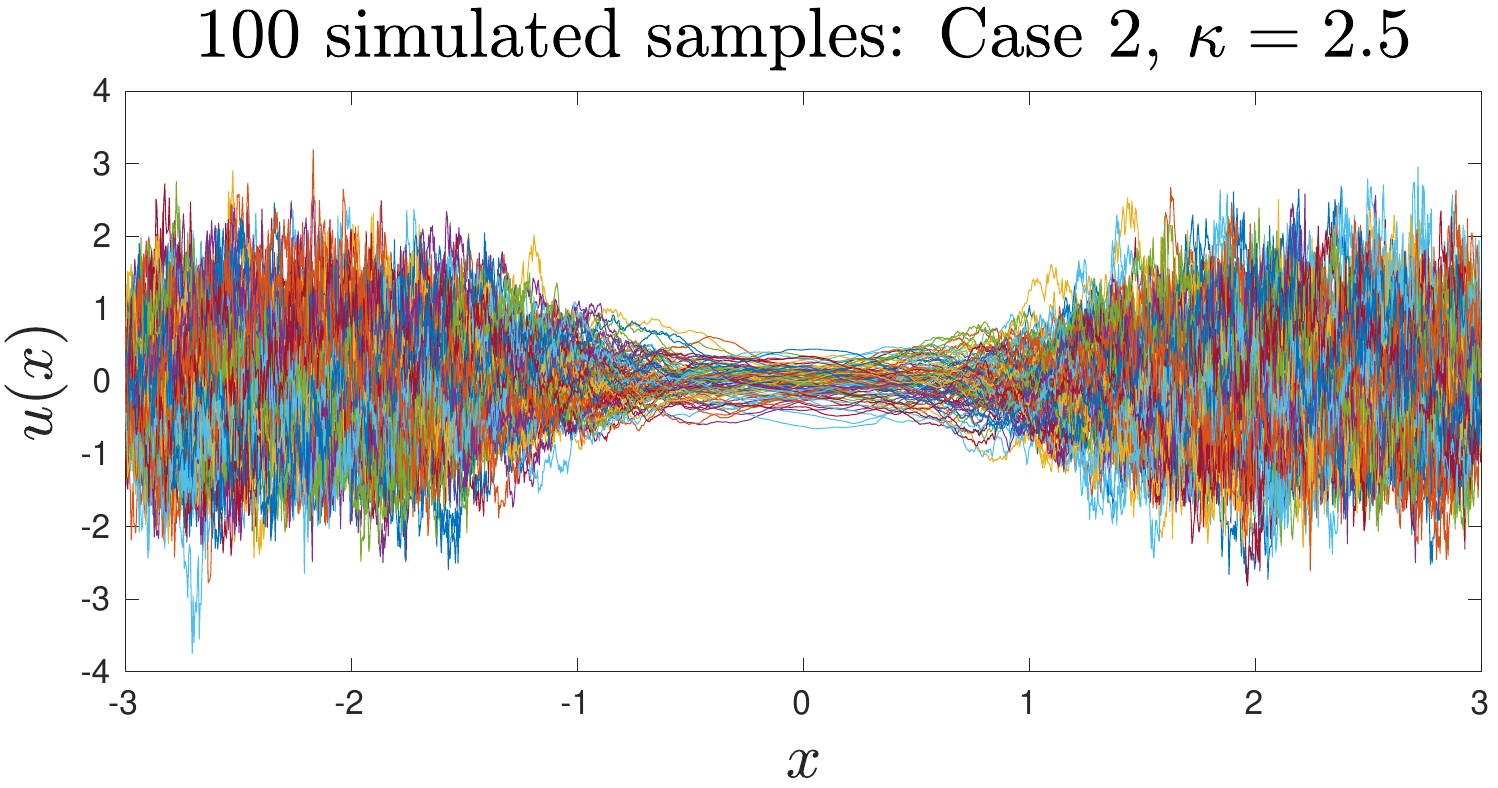}
	\includegraphics[width=0.45\textwidth]{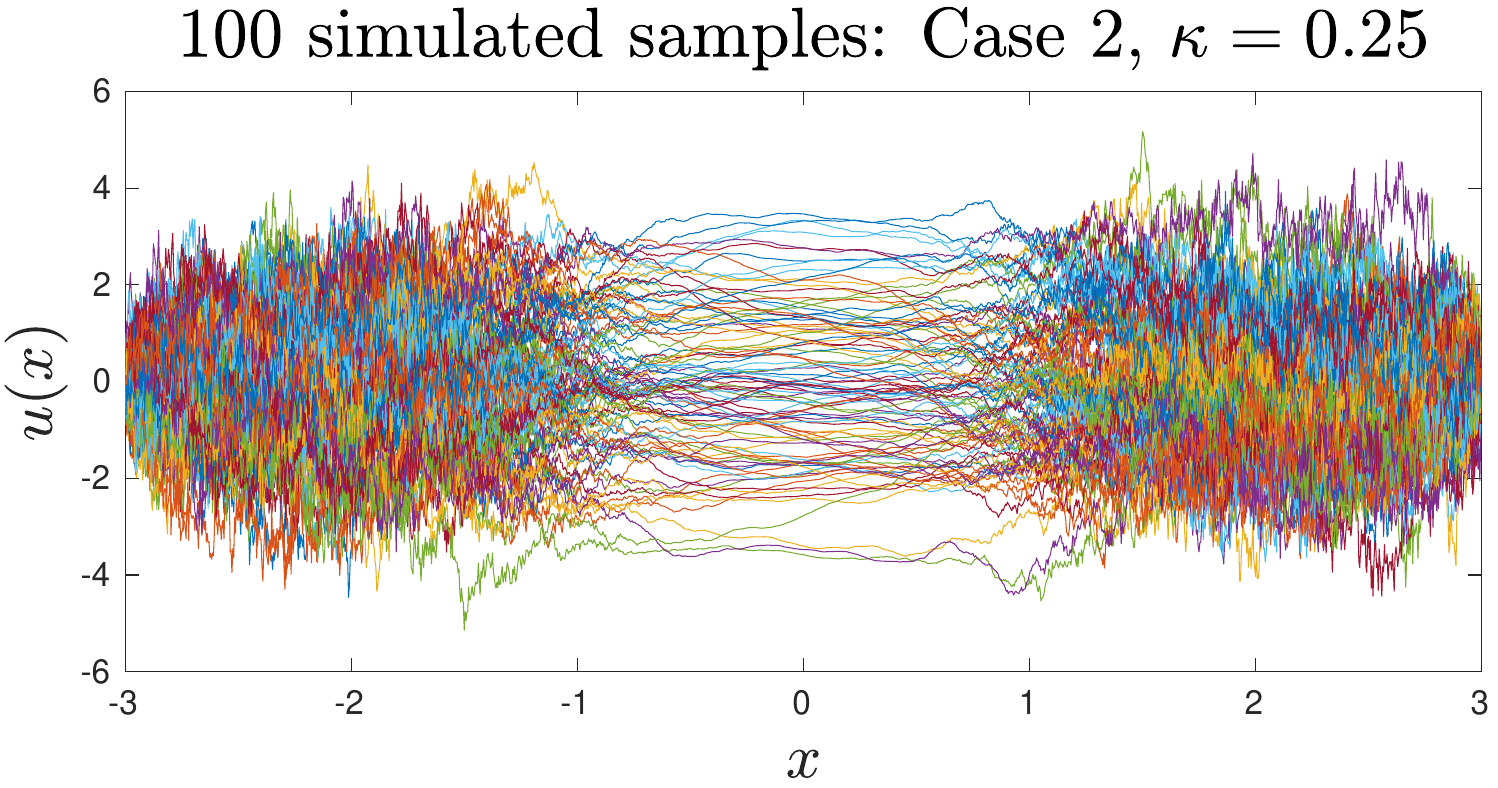} \\
	\includegraphics[width=0.45\textwidth]{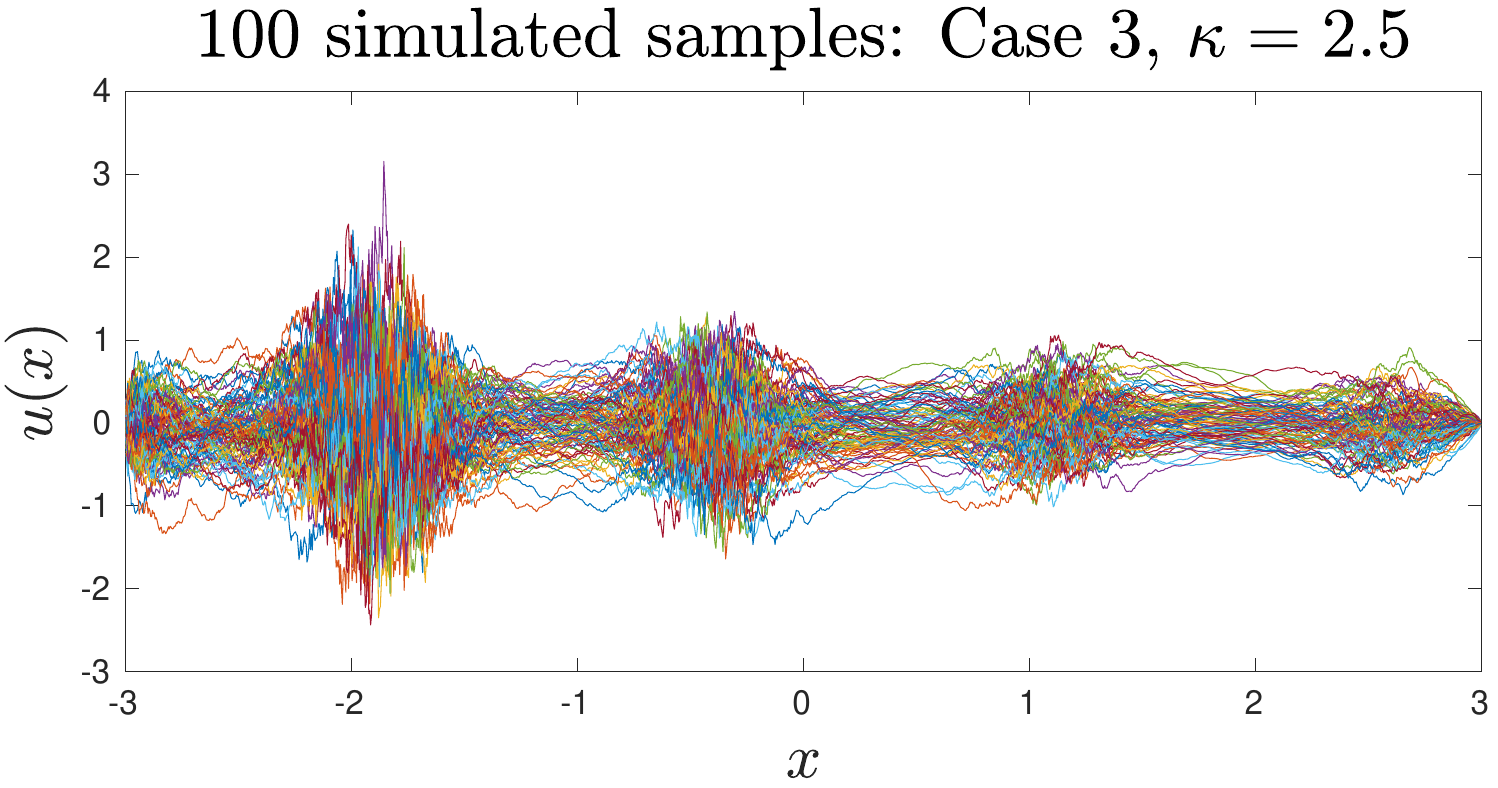} 	
	\includegraphics[width=0.45\textwidth]{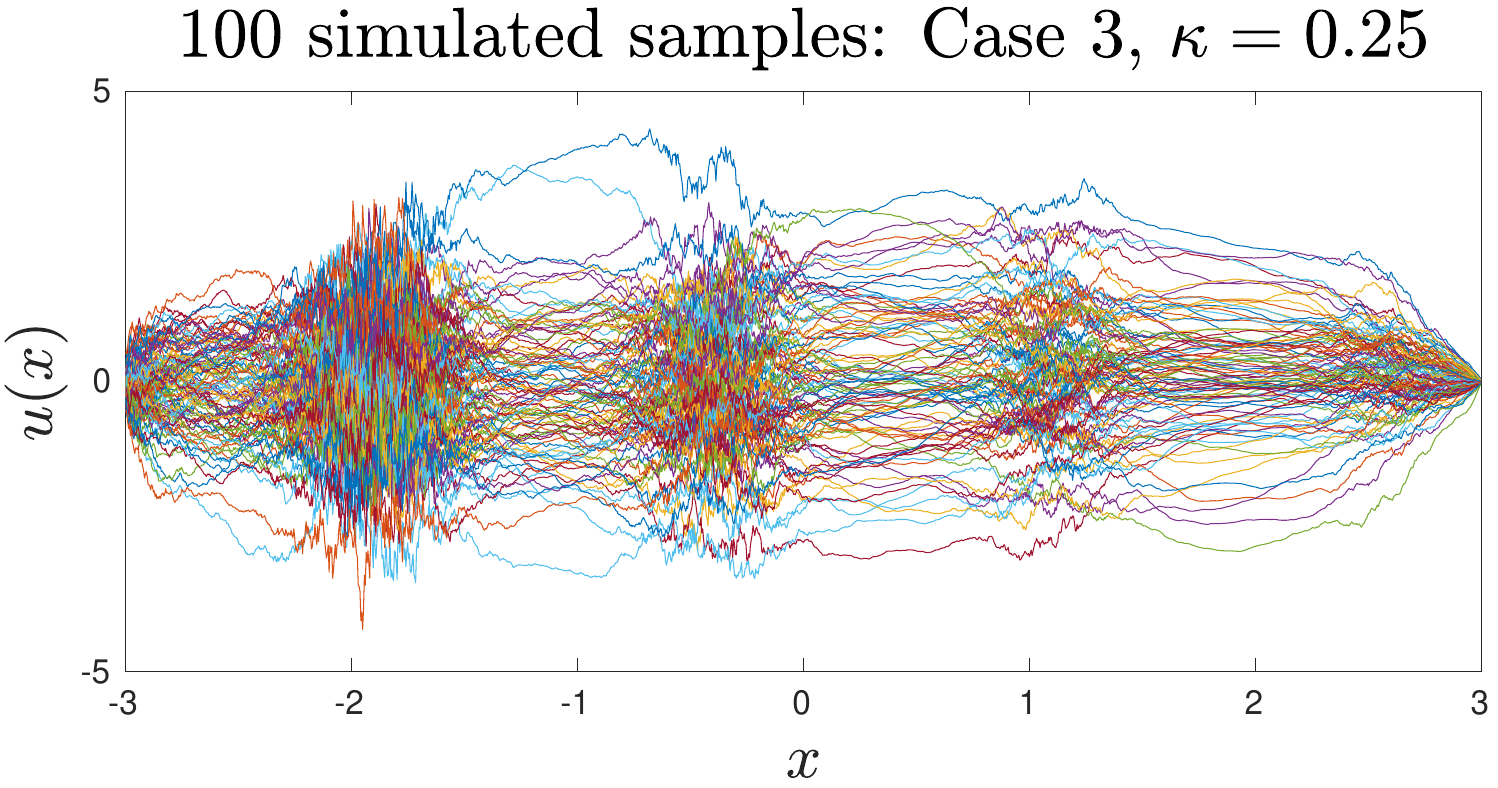}
\end{center} 
	\caption{The images show 100 generated samples (out of the 1000 samples) in all three cases when $\kappa=2.50$ (left column) and $\kappa=0.25$ (right column).} \label{fig:var_s_samples_k=2.5_and0.25} 
\end{figure}
Interestingly, for Case~1, one can choose positive constants $\kappa$, $\mu$ (or a positive function $\mu(x)$) and a radius $\delta>0$ so that, for every point $x\in\mathcal D$ sufficiently far from the boundary, the covariance satisfies $C_{\overline s}(x,y)\le C_s(x,y)\le C_{\underline s}(x,y)$ for every $y\in (x-\delta,x+\delta)$.
For example, let's take $\nu_{\underline s}=0.2$ (hence $\underline s=\nu_{\underline s}/2+1/4=  0.35$), $\kappa_{\underline s}=1.5$, $\nu_{\overline s}=1.2$ (hence $\overline s=0.85$), $\kappa_{\overline s}=2.5$, and set $\kappa_s=(\kappa_{\underline s}+\kappa_{\overline s})/2 =  2$ and $\mu=1$. We consider three sampling scenarios, namely a) constant $s=\underline s$, b) constant $s=\overline s$, and c) the piecewise-constant $s(x)$ in Case 1. Then we observe that
$C_{\overline s}(x,y)\le C_s(x,y)\le C_{\underline s}(x,y)$ when $x=-1.5,0,1.5$ and all $y\in\mathcal D$; these results are illustrated in \autoref{fig:cov_comp_case1}. In fact, with this choice of $\kappa$ and $\mu$, these inequalities are true for all $x,y\in \mathcal D$ in this particular example. 
\begin{figure}[!htbp] 
\begin{center} 
	\includegraphics[width=0.32\textwidth]{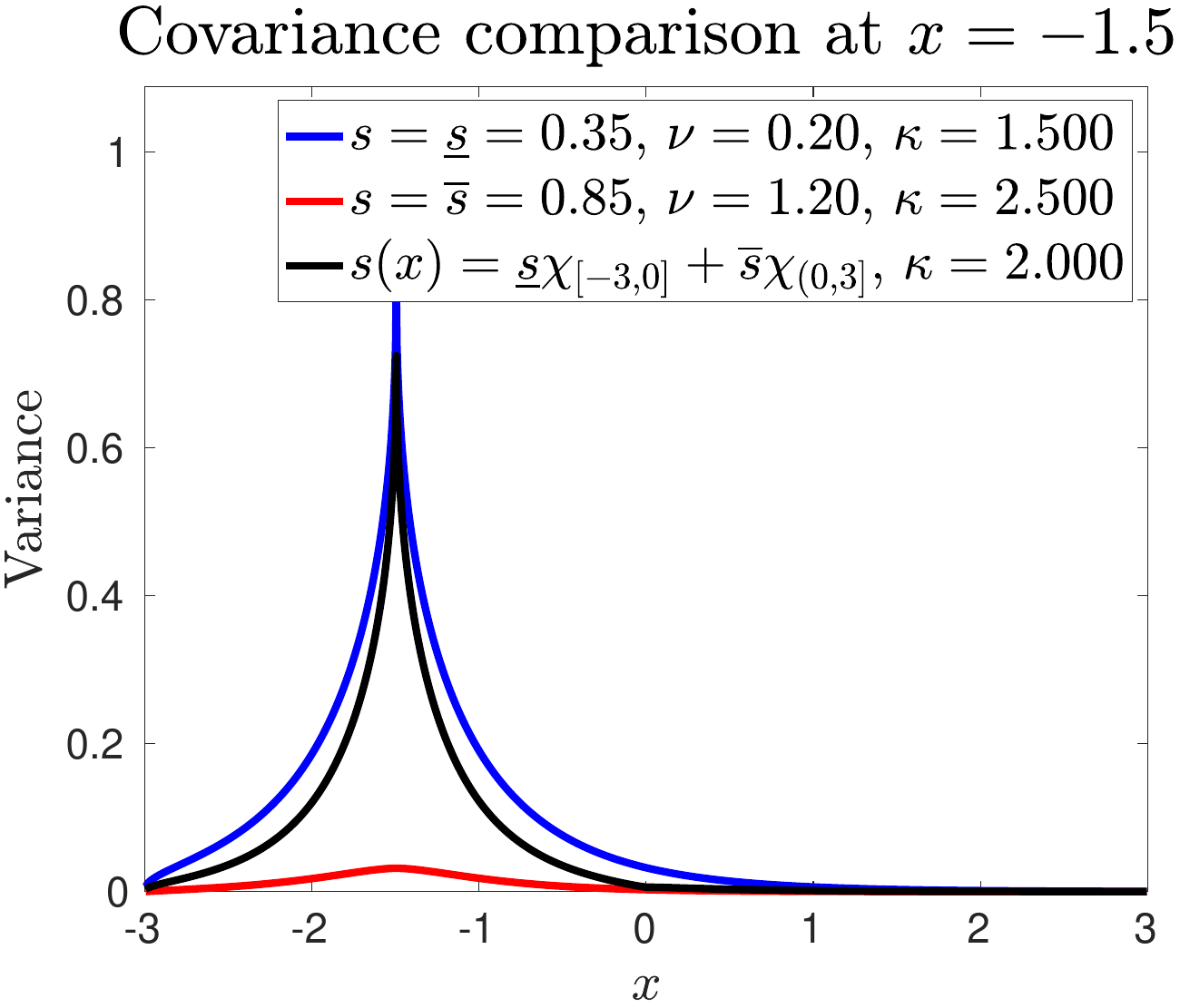} 	\includegraphics[width=0.32\textwidth]{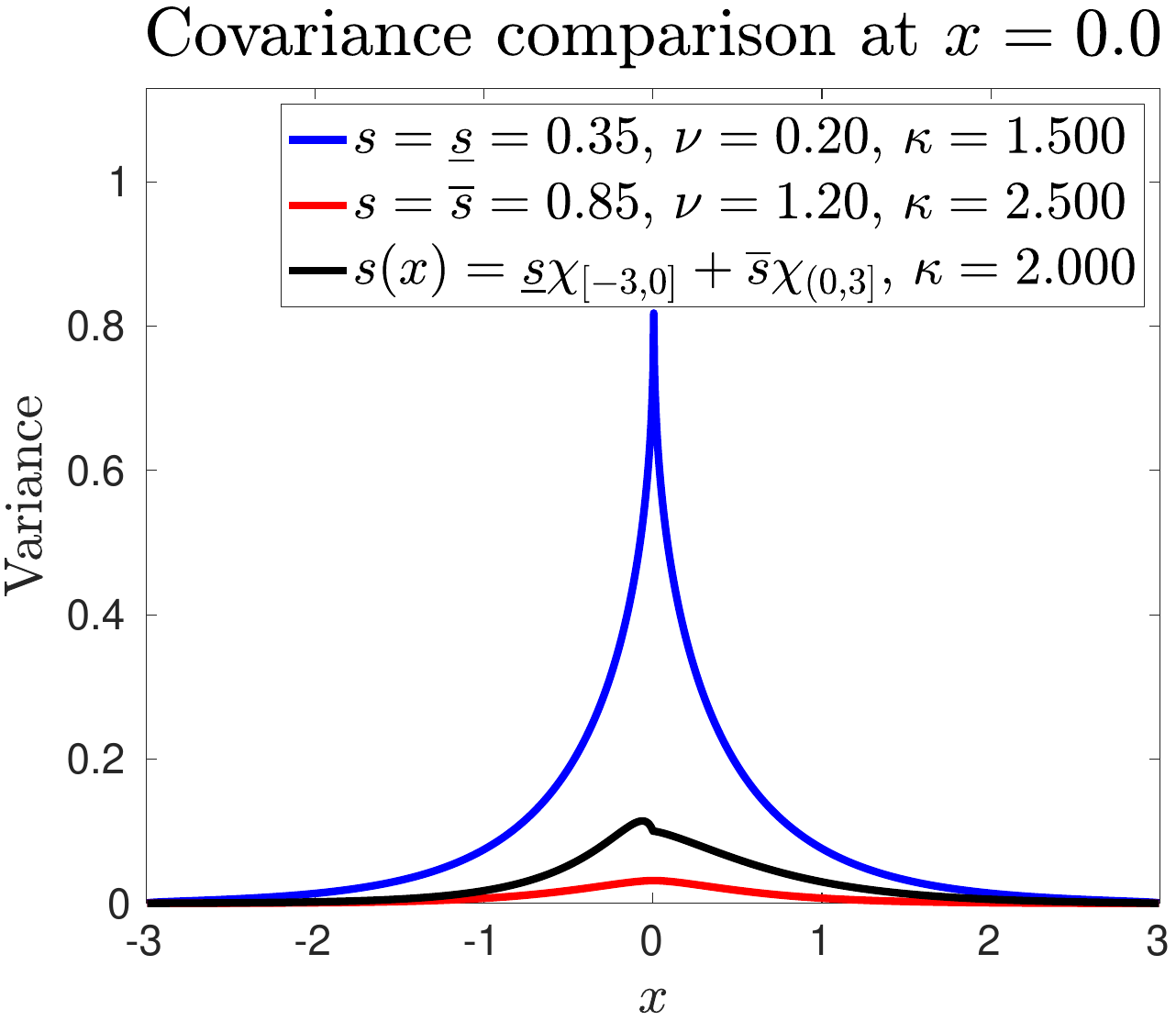}
    \includegraphics[width=0.32\textwidth]{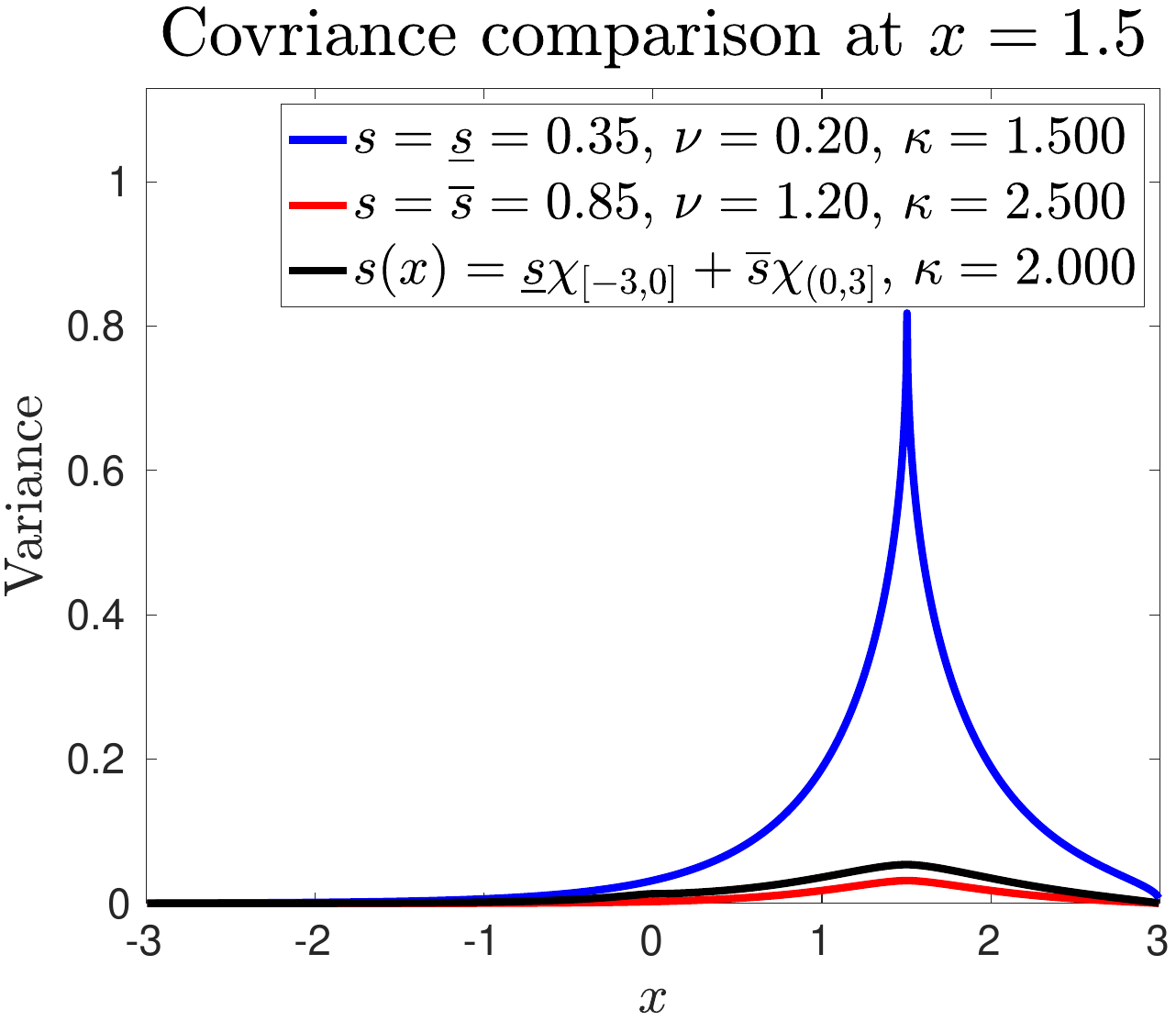}
\end{center}
	\caption{Comparison of the solution covariance profiles $C_s(x,\cdot)$ for the variable--smoothness case $s = s(x)$ in Case~1 and the two constant--smoothness cases $s = \underline{s} = 0.35$ and $s = \overline{s} = 0.85$. The panels shows the covariances at $x=-1.5$, $x=0$ and $x=1.5$. }
\label{fig:cov_comp_case1} 
\end{figure}

We repeat the same scenarios for the Gaussian bump profile (Case~2) and the oscillatory ramp profile (Case~3). The corresponding results for Cases~2 and 3 are given in \autoref{fig:cov_comp} (bottom-left and bottom-right panels, resp.) and \autoref{fig:var_s_samples_k=2.5_and0.25} (second and third rows, resp.). Finally, in \autoref{fig:var_s_3dCov_k=2.5_and0.25}, we plot the solution covariance for all three cases for both $\kappa=2.5$ and $\kappa=0.25$.

\begin{figure}[!htbp] 
\begin{center} 	 
	\includegraphics[width=0.28\textwidth]{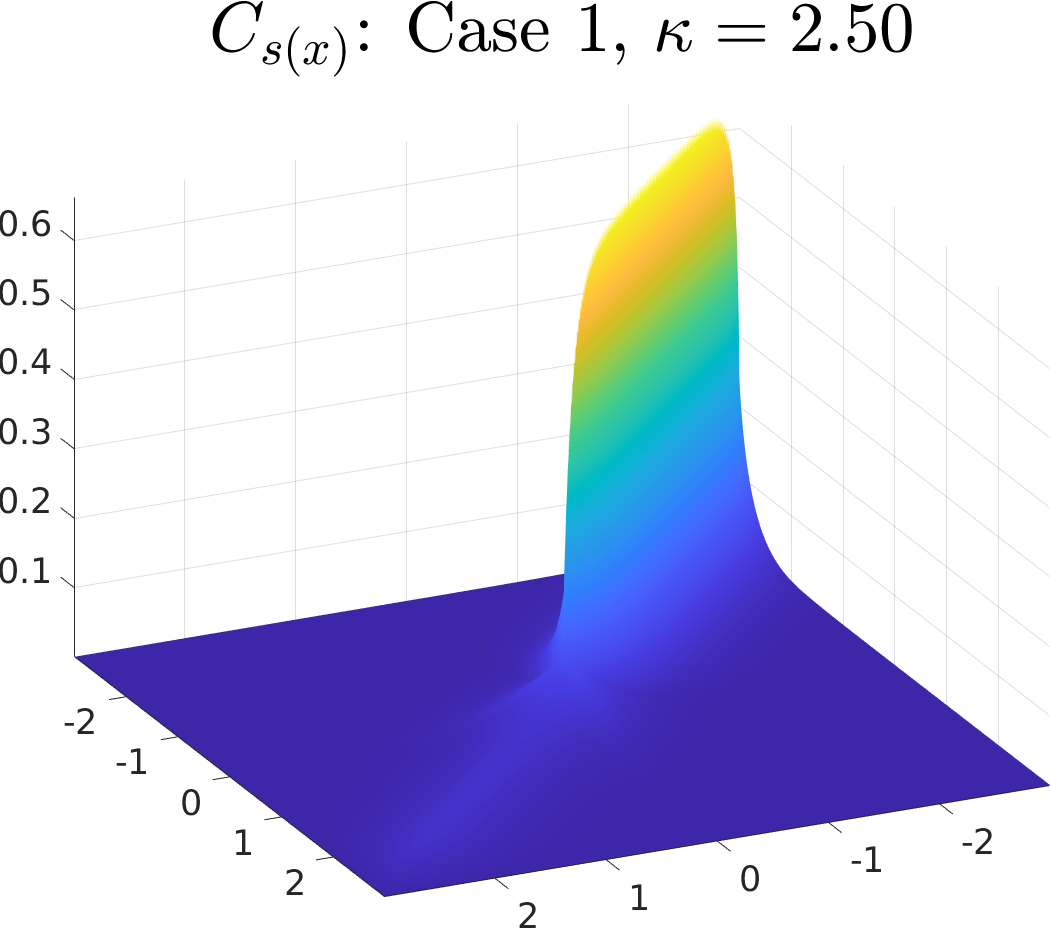} 
	\includegraphics[width=0.28\textwidth]{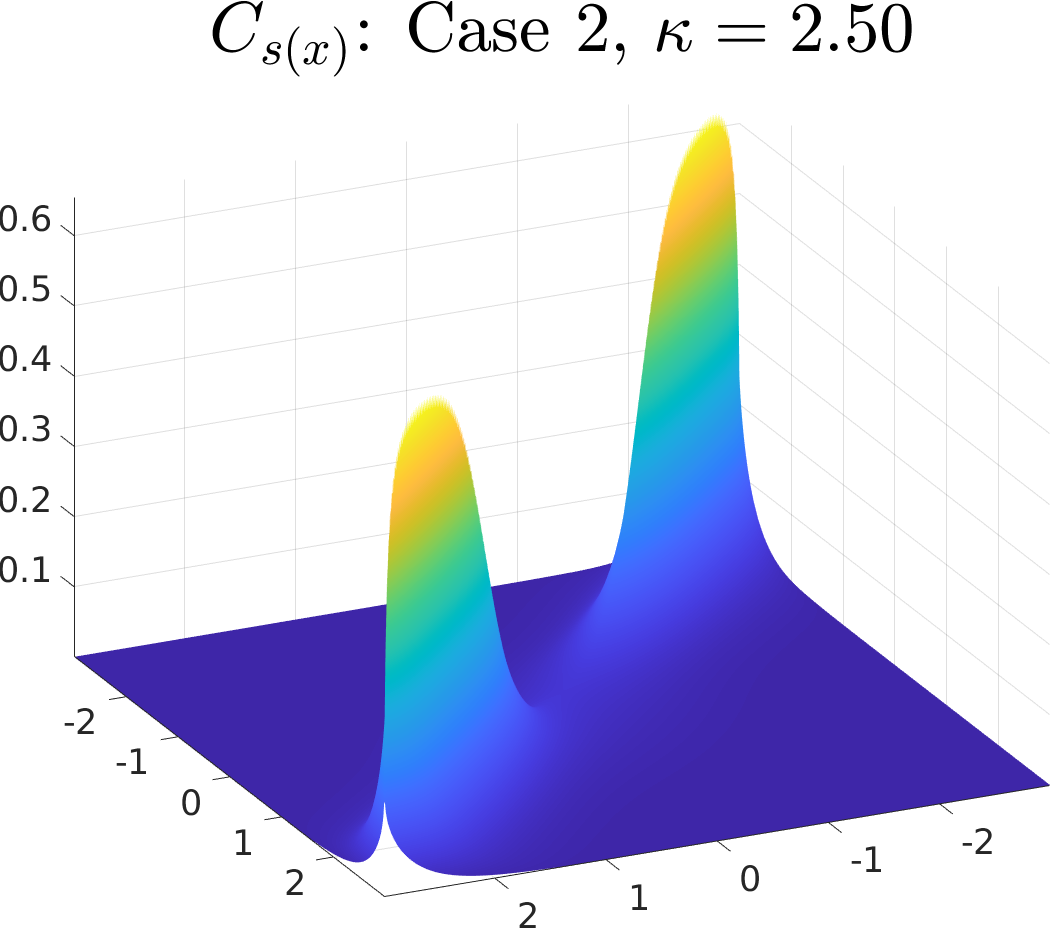}
	\includegraphics[width=0.28\textwidth]{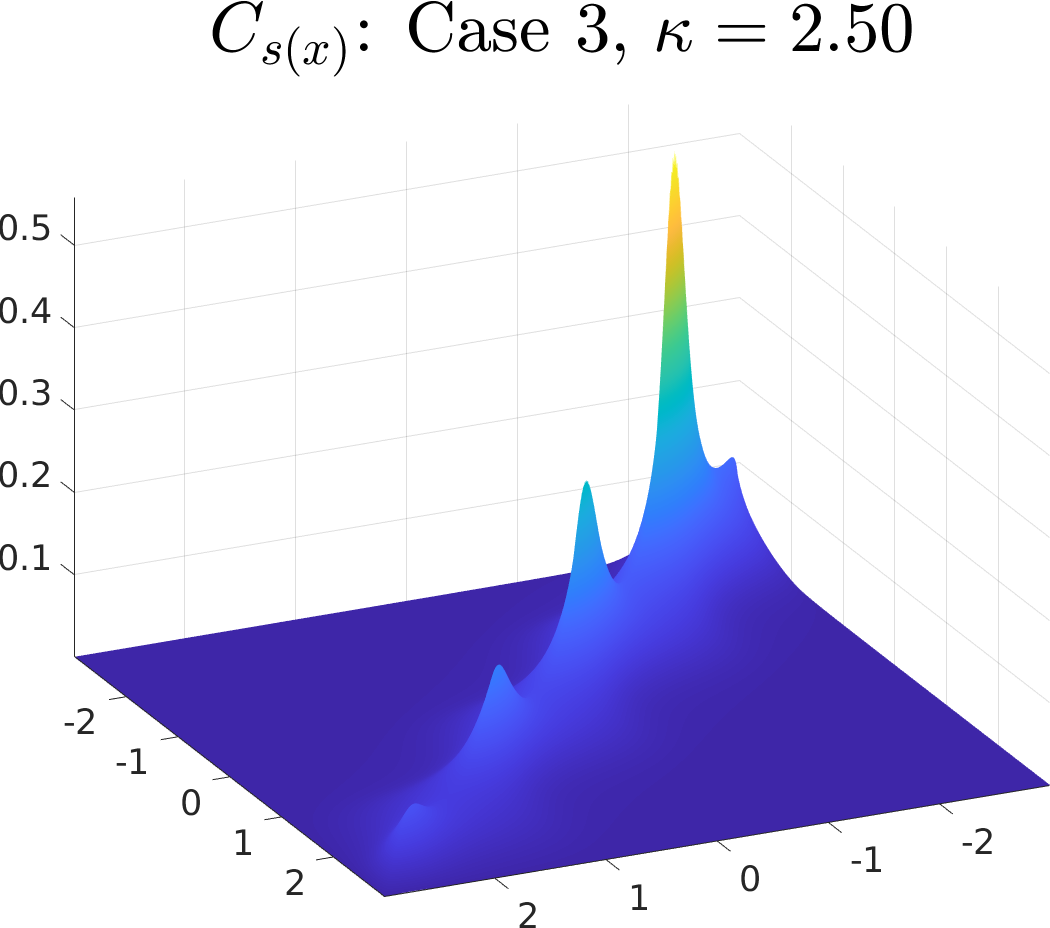}\\
	\includegraphics[width=0.28\textwidth]{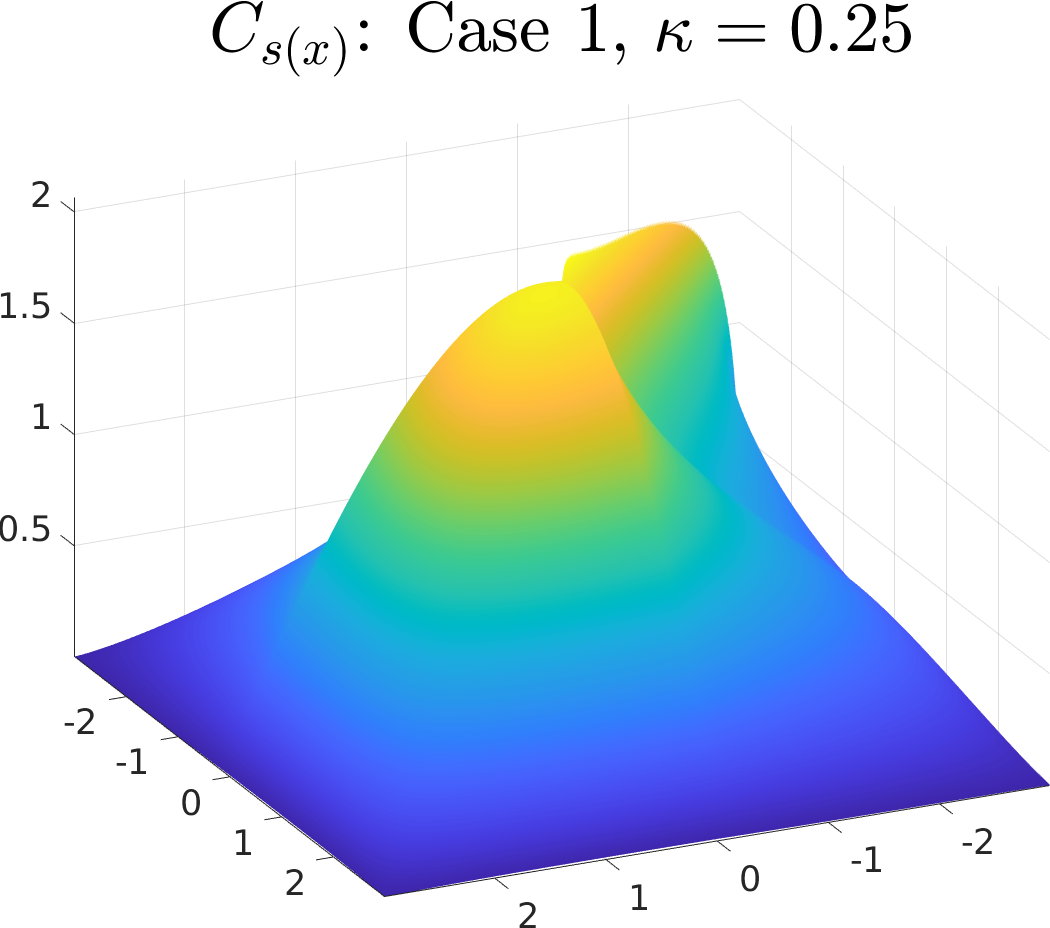}
	\includegraphics[width=0.28\textwidth]{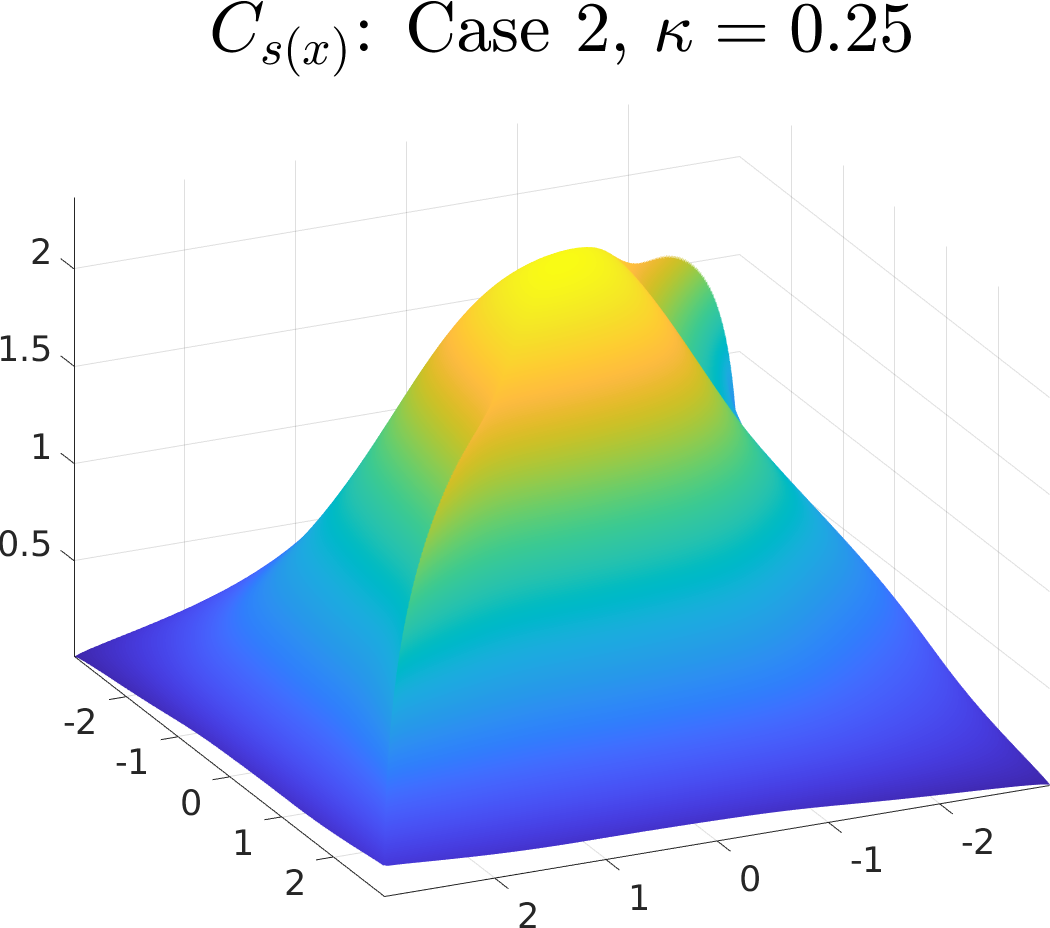} 
	\includegraphics[width=0.28\textwidth]{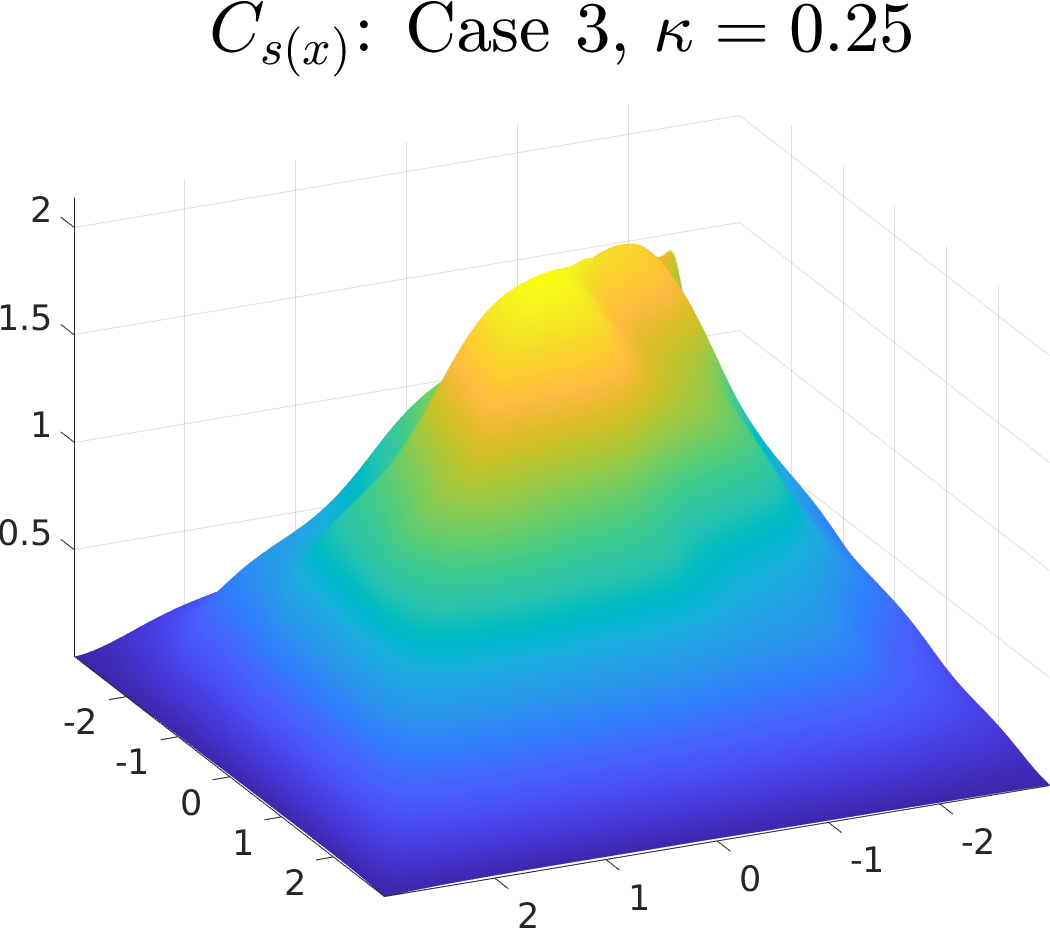}
\end{center} 
	\caption{The solution covariance $C_s$ for $\kappa=2.5$ (top row) and $\kappa=0.25$ (bottom row) for cases 1--3.} \label{fig:var_s_3dCov_k=2.5_and0.25} 
\end{figure}

\subsection{FE Convergence}
In this section, we use the same $R_{int}$ and $R_{ext}$ as in the previous section and set $\kappa=2.5$. For each Monte Carlo sample, we draw a fine-scale Gaussian vector $Z_f$ and define the fine right-hand side $b_f = L_f Z_f$, where $L_f$ is the lower-triangular Cholesky factor of the mass matrix $M_f$. The corresponding coarse problem is driven by the restricted load $b_c = P_{f,c}^\top b_f$, where $P_{f,c}$ denotes the canonical FEM injection (interpolation) operator from the coarse space to the fine space, that is, $(P_{f,c}\,u_c)_i = u_c(x_f^i)$, where $x_f$ is the fine mesh. 
Since the true solution \added[id=rev]{is unavailable, we estimate the convergence rates using a self-rate study \cite{Roache1998}.} Let $\ell$ be some given positive integer that corresponds to the fine level, that is, $h_l=2^{-\ell}$ and $U_\ell = \{U_\ell^{(i)}\}$ is the collections of samples generated at this level. Set $
E_{\ell} = \mathbb{E}\left[\bigl\| U_\ell - P_{\ell,\ell-1}\, U_{\ell-1} \bigr\|_{M_{\ell}}^2 \right]^{1/2} = \Big[\frac{1}{m}\sum_{i=1}^m (U_\ell^{(i)} - P_{\ell,\ell-1} U_{\ell-1}^{(i)})^\top M_\ell (U_\ell^{(i)} - P_{\ell,\ell-1} U_{\ell-1}^{(i)})\Big]^{1/2},$
where $\|\cdot\|_{M}$ is the mass-matrix $L_2$ norm. Alternatively, one can set $E_{\ell} = \mathbb{E}\left[\bigl\| U_\ell - P_{\ell,\ell-1}\, U_{\ell-1} \bigr\|^2 \right]^{1/2} \approx \left[ \sum_{\tau \in \mathcal{T}_\ell} h \sum_{q=1}^{Q} w_q \, \bigl( U_\ell(x_q) -  P_{\ell,\ell-1} U_{\ell-1}(x_q) \bigr)^2 \right]^{1/2}$, where $\{x_q\}$ are Gauss-quadrature points and $\{w_q\}$ are the corresponding weights. Then $\hat{r}=\log(E_{\ell-1}/E_{\ell})/\log(2)$ is an approximation of the convergence rate for large enough $\ell$. This requires available samples at three consecutive levels $\ell$, $\ell-1$ and $\ell-2$. 

\Crefrange{tbl:const_s_rhat}{tbl:var_s_rhat} report the estimated convergence rates for three different scenarios obtained from levels 9,8 and 7.
In the first scenario, the fractional order $s$ is constant, and the observed
rate is close to $2s-d/2$, in agreement with the theoretical result of
\cite{Bolin2020} for constant fractional order.  In the second scenario, $s$ varies spatially according to the three cases defined in~\eqref{eq:s_cases}, with
$\underline{s}=0.35$ and $\overline{s}=0.85$. The third scenario uses the same
variable-order profiles as in the second, but with a higher lower bound
$\underline{s}=0.65$; specifically in Case 3, we set $a=0.6705$, $b = 0.8297$, and $\omega= 0.05$ to achieve $(\underline s, \overline s) = (0.65,0.85)$. For the variable-order cases, the convergence rate is close to $2\underline s - 0.5$ in the second scenario $(\underline s, \overline s) = (0.35, 0.85)$ and closer to $2\langle s\rangle-0.5$ in the third scenario $(\underline s, \overline s) = (0.65, 0.85)$.

\begin{table}[h!]
\centering
\renewcommand{\arraystretch}{1.2}
\begin{tabular}{|c|c|c|c|c|}
\hline
$s$ 
& $0.3$ 
& $0.5$ 
& $0.7$ 
& $0.9$ \\
\hline
$r_\star \;/\; \hat r$ 
& $0.10 \;/\; 0.10$
& $0.50 \;/\;  0.51$
& $0.84 \;/\; 0.94$
& $1.12 \;/\; 1.43$ \\
\hline
\end{tabular}
\caption{Estimated strong $L_2$ convergence rates $\hat r$ for different constant fractional orders $s$, compared with the theoretical rate $r_\star$ of \sref{thm:spectral_underline_s}(ii); $r_\star=2s-\tfrac12$ for $s\le\tfrac12$.}
\label{tbl:const_s_rhat}
\end{table}
\begin{table}[h!]
\centering
\renewcommand{\arraystretch}{1.1}
\begin{tabular}{|l|c|c|c|c|}
\hline
\multirow{2}{*}{\quad $s(x)$} 
& \multicolumn{2}{c|}{$(\underline{s},\overline{s}) = (0.35,0.85)$}
& \multicolumn{2}{c|}{$(\underline{s},\overline{s}) = (0.65,0.85)$} \\ \cline{2-5}
& $r_\star \;\;/\;\; 2\langle s\rangle-\tfrac12$
& $\hat r$
& $r_\star \;\;/\;\; 2\langle s\rangle-\tfrac12$
& $\hat r$ \\ \hline
Step function
& $0.2 \;\;/\;\; 0.7$ & $0.20$
& $0.77 \;\;/\;\; 1.0$ & $0.83$ \\[0.3em]
Gaussian bump
& $0.2 \;\;/\;\; 0.47$ & $ 0.22$
& $0.77 \;\;/\;\; 0.91$ & $0.86$ \\[0.3em]
Oscillatory ramp
& $0.2 \;\;/\;\; 0.7$ & $0.26$
& $0.77 \;\;/\;\; 1.0$ & $0.93$ \\
\hline
\end{tabular}
\caption{Estimated convergence rates $\hat r$ for the three Cases in \eqref{eq:s_cases} at two choices of $(\underline{s},\overline{s})$, compared with the theoretical rate $r_\star$ ($=2\underline s-\tfrac12$ for $\underline s\le\tfrac12$) and the heuristic $2\langle s\rangle-\tfrac12$.}
\label{tbl:var_s_rhat}
\end{table}

\section{Conclusion}
\label{sec:conclusions}
We introduced and analyzed a nonlocal, integral-form generalization of Whittle–Mat\'ern Gaussian fields in which the fractional order varies in space. Working on a truncated bounded domain $\mathcal G$, we built an energy space adapted to the heterogeneous kernel induced by the modified Bessel function of the second kind with local exponent $\beta(x,y)=(s(x)+s(y))/2$ and smoothness field $\nu(x,y) = \beta(x,y) +d/2$, proved completeness of that energy space, and used form-representation and form-comparison arguments to obtain a self-adjoint positive operator with compact resolvent and two-sided spectral bounds. By means of the spectral expansion we constructed the Gaussian generalized solution, established uniqueness in the class of Gaussian generalized fields, and derived Sobolev regularity controlled by the minimal local order. In particular, our solution belongs to $H^r(\mathcal G)$ for every \replaced[id=rev]{$r<r_\star$, where $r_\star$ is given in \eqref{eq:r_star} and equals $2\underline s-\tfrac{d}{2}$ unless $d=1$ and $\underline s>1/2$}{$r<2\underline s-\tfrac{d}{2}$}, hence to $L_2(\mathcal G)$ when $\underline s>d/4$. Complementing the theory, we proposed a finite-element sampler for the integral model and presented one-dimensional numerical experiments that illustrate how spatially varying smoothness alters the solution covariance. The integral formulation leads to dense interaction matrices and nonlocal volume constraints that pose significant computational challenges in higher dimensions; in a follow-up work \cite{Ruzayqat2026sampling} we address these challenges via hierarchical low rank matrices as done in \cite{Lei2023} for nonlocal fractional Laplacian operators.
An alternative approach to reduce the computational costs would be to employ wavelet methods similar to those developed \cite{Schneider2009} for variable order pseudodifferential equations.

Our numerical experiments indicate that the integral-form, variable-order model behaves as expected and this concordance between theory and computation suggests that the framework can be embedded into extended inference and calibration pipelines, e.g., learning spatial heterogeneity from data. 
Developing these extensions is an important direction for future work. In addition, to make the models more flexible for statistical applications, an additional direction for future work is to remove the restriction $s\in(0,1)$, thereby allowing smoother fields.

\section*{Acknowledgments}
The work of HR, DB, GT and OK was supported by KAUST baseline fund. WL is partially supported by the National Natural Science Foundation of China
(Grant No. 12301496 and 12431015) and the Fundamental Research Fund for the Central Universities, University of Electronic Science and Technology of China (Grant No. Y030242063002065).

\appendix 
\label{sec:appendix}
\section{Proofs of Main Results in the Manuscript}
Assume that $s\in C(\overline{\mathcal G})$ and $0<\underline s\le s(x)\le\overline s<1, \forall x\in\overline{\mathcal G}$. Define $
\nu(x,y):=\frac d2+\beta(x,y)$.
For $x\neq y$ set $r=|x-y|$ and define
\begin{align*}
\tilde w_{-2s}(x,y)
:= \frac{1}{\pi^{d/2}}\frac{2^{1-\frac d2 + \beta(x,y)}}{|\Gamma(-\beta(x,y))|}
\kappa^{\nu(x,y)} K_{\nu(x,y)}(\kappa r)\, r^{\nu(x,y)} \quad and \quad \gamma(x,y):=\frac12\frac{\tilde w_{-2s}(x,y)}{r^{d+2\beta(x,y)}}.
\end{align*}
Constants $C,C_1,\dots$ may change from line to line but are explicit when crucial. We introduce the following crucial lemmas and proposition.

\subsection{Uniform Bessel asymptotics (uniform in index)}
We need Bessel bounds uniform for $\nu$ in the compact interval $I_\nu=[d/2+\underline s,d/2+\overline s]$.

\begin{lemma}[Uniform $K_\nu$ bounds]\label{lem:Bessel_uniform}
There exist constants $z_0>0$ and $C_{0},C_{1},C_{2}>0$ (depending only on $d,\underline s,\overline s$) such that
\begin{enumerate}[(i)]
\item For all $\nu\in I_\nu$ and $0<z\le z_0$, we have $\,
C_0\, z^{-\nu} \le K_\nu(z) \le C_1\, z^{-\nu}$.
\item For all $\nu\in I_\nu$ and $z\ge z_0$, we have $\,
K_\nu(z) \le C_2\, z^{-1/2} e^{-z}.
$
In particular, if $z_0\le z \le z_1$ for some $z_1>z_0$, then there exists a constant $c_2>0$ such that $
c_2\, z^{-1/2} e^{-z} \le K_\nu(z) \le C_2\, z^{-1/2} e^{-z}$.

\end{enumerate}
\end{lemma}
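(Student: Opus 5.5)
The plan is to work with the integral representation
\[
K_\nu(z)=\int_0^\infty e^{-z\cosh t}\cosh(\nu t)\,dt,\qquad z>0,
\]
valid for all real $\nu$. Its integrand is jointly continuous in $(\nu,z)$ and, for fixed $t$, monotone increasing in $\nu\ge0$. Because $I_\nu=[d/2+\underline s,d/2+\overline s]$ is compact, every constant can be obtained by taking extrema of continuous functions over $I_\nu\times$(compact $z$-ranges). The small-$z$ and large-$z$ regimes can then be handled separately.

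\emph{Step (i), small $z$.} Consider $g(\nu,z):=z^\nu K_\nu(z)$. The classical limit $z^\nu K_\nu(z)\to 2^{\nu-1}\Gamma(\nu)$ as $z\downarrow0$ (for $\nu>0$) needs to be uniform in $\nu\in I_\nu$. I would establish this through the substitution $e^t=2u/z$ in the integral, or more directly through
\[
z^\nu K_\nu(z)=2^{\nu-1}\int_0^\infty e^{-u-z^2/(4u)}u^{\nu-1}\,du,
\]
which follows from $K_\nu(z)=\tfrac12(z/2)^\nu\int_0^\infty e^{-u-z^2/(4u)}u^{-\nu-1}du$ after the change of variables $u\mapsto z^2/(4u)$. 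In this form $g$ extends continuously to $I_\nu\times[0,1]$: by dominated convergence with the majorant $e^{-u}u^{\nu-1}\le e^{-u}\max(u^{d/2+\underline s-1},u^{d/2+\overline s-1})$, which is integrable, and with limit $g(\nu,0)=2^{\nu-1}\Gamma(\nu)>0$. Moreover $g>0$ everywhere on this set. A positive continuous function on the compact set $I_\nu\times[0,z_0]$ is bounded above and below by positive constants, which gives $C_0$ and $C_1$ for any fixed $z_0\le1$. I will simply take $z_0=1$.

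\emph{Step (ii), large $z$.} Write $K_\nu(z)=\sqrt{\pi/(2z)}\,e^{-z}\,h(\nu,z)$ with
\[
h(\nu,z)=\frac{1}{\Gamma(\nu+\tfrac12)}\int_0^\infty e^{-u}u^{\nu-1/2}\Big(1+\frac{u}{2z}\Big)^{\nu-1/2}du,
\]
which is the standard Basset-type representation valid for $\nu>-1/2$. Since $\nu-\tfrac12\ge d/2+\underline s-\tfrac12>0$, the factor $(1+u/2z)^{\nu-1/2}$ is decreasing in $z$. Hence, for $z\ge z_0$, $h(\nu,z)\le h(\nu,z_0)$, and $h(\nu,z_0)$ is continuous in $\nu$ on $I_\nu$ by dominated convergence, so it is bounded. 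This yields $C_2$. For the two-sided statement on $[z_0,z_1]$, note that $h\ge 1$, because the factor $(1+u/2z)^{\nu-1/2}$ is at least $1$. This already gives a lower bound $c_2=\sqrt{\pi/2}$, valid in fact for all $z\ge z_0$; the restriction to $[z_0,z_1]$ is therefore harmless.

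\emph{Main obstacle.} The only delicate point is the uniformity in the index, that is, justifying the continuity of $g$ and $h$ in $\nu$ up to $z=0$ or at $z_0$. This is handled by the explicit integrable majorants described above, which depend only on the endpoints of $I_\nu$. Both representations are standard (see \cite[Ch.~10]{Olver2010nist}), so I would cite them rather than rederive them.
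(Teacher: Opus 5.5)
Your proof is correct, but it takes a different and more self-contained route than the paper's. The paper only quotes the classical limits $K_\nu(z)\sim2^{\nu-1}\Gamma(\nu)z^{-\nu}$ as $z\downarrow0$ and $K_\nu(z)\sim\sqrt{\pi/(2z)}\,e^{-z}$ as $z\to\infty$. It asserts that both hold uniformly for $\nu\in I_\nu$, referring to DLMF Ch.~10 and Watson, and then ``chooses'' $z_0$ and the constants. Two things are left implicit there: why the limits are uniform in $\nu$, and how to patch the intermediate range by continuity, since the small- and large-argument thresholds need not coincide.

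You instead derive the uniformity from explicit integral representations. The formula $z^\nu K_\nu(z)=2^{\nu-1}\int_0^\infty e^{-u-z^2/(4u)}u^{\nu-1}\,du$ follows correctly from your change of variables. It gives a jointly continuous, positive function on $I_\nu\times[0,1]$ by dominated convergence with your majorant. The Basset-type formula, combined with $\nu-\tfrac12\ge\tfrac d2+\underline s-\tfrac12>0$, makes $h(\nu,\cdot)$ nonincreasing with $h\ge1$.

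Your approach gains several things:
\begin{enumerate}[(i)]
\item A single threshold $z_0=1$ serves both regimes, so there is no intermediate gap to fill.
\item The constants are explicit: $c_2=\sqrt{\pi/2}$ and $C_2=\sqrt{\pi/2}\,\max_{\nu\in I_\nu}h(\nu,1)$. Monotonicity of $g$ in $z$ even gives $C_1=\max_{\nu\in I_\nu}2^{\nu-1}\Gamma(\nu)$.
\item You get the stronger lower bound $K_\nu(z)\ge\sqrt{\pi/(2z)}\,e^{-z}$ for all $z>0$, so the restriction to $[z_0,z_1]$ in (ii) is unnecessary.
\end{enumerate}
The paper's route is shorter but rests entirely on the cited uniformity. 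Your opening $\cosh$ representation is never used and can be dropped.
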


\begin{proof}
The uniform small- and large-argument asymptotics of $K_\nu$ on compact $\nu$--intervals are classical; as $z\downarrow0$, $K_\nu(z)\sim 2^{\nu-1}\Gamma(\nu)z^{-\nu}$, and as $z\to\infty$, $K_\nu(z)\sim\sqrt{\pi/(2z)}e^{-z}$, uniformly for $\nu\in I_\nu$; see \cite[Ch.~10]{Olver2010nist}, \cite{Watson1995}. Choosing $z_0>0$ and suitable constants $C_0,C_1,C_2,c_2>0$ yields the required bounds.
\end{proof}

\subsection{Two-regime kernel bounds}
This is the first crucial technical lemma we use repeatedly.

\begin{lemma}[Two-regime bounds for $\gamma$]\label{lem:gamma_two_regime}
There exist $r_0>0$ and positive constants $c_1,c_2,C_{\mathrm{tail}},C'$ (depending only on $d,\kappa,\underline s,\overline s$ and ultimately on $z_0,C_0,C_1,C_2$ of \sref{lem:Bessel_uniform}) such that
\begin{enumerate}[(a)]
\item For all $x,y\in\mathcal G$ with $0<r=|x-y|\le r_0$,
$c_1 r^{-(d+2\beta(x,y))} \le \gamma(x,y) \le c_2 r^{-(d+2\beta(x,y))}$.
\item For all $x,y\in\mathcal G$ with $r\ge r_0$, $0<\gamma(x,y)\le C_{\mathrm{tail}}$,\,  and $\,\gamma(x,y)\le C' r^{-1/2} e^{-\kappa r}$.
In particular, if $r_0 \leq r \leq r_1$ for some $r_1>r_0$, then  $c' > 0$ exists such that $ c' r^{-1/2} e^{-\kappa r} \le \gamma(x,y)\le C' r^{-1/2} e^{-\kappa r}$.
\end{enumerate}
\end{lemma}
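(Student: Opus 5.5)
The plan is to write $\gamma$ in the factored form
\[
\gamma(x,y)=\tfrac12\,\frac{2^{1-\frac d2+\beta}}{\pi^{d/2}|\Gamma(-\beta)|}\,\kappa^{\nu}\,K_{\nu}(\kappa r)\,r^{\nu-d-2\beta},\qquad \nu=\tfrac d2+\beta,\ \beta=\beta(x,y)\in[\underline s,\overline s],
\]
and bound each factor separately. We then combine these bounds with the two regimes of \sref{lem:Bessel_uniform}.

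\emph{Prefactor.} The prefactor $P(\beta):=\frac{2^{1-\frac d2+\beta}}{2\pi^{d/2}|\Gamma(-\beta)|}\kappa^{d/2+\beta}$ is a continuous, strictly positive function of $\beta$ on the compact interval $[\underline s,\overline s]\subset(0,1)$. This holds because $\Gamma$ has no zeros and no poles on $(-1,0)$, and $|\Gamma(-\beta)|=\Gamma(1-\beta)/\beta$ is finite and nonzero there. Hence $0<p_-\le P(\beta)\le p_+$ uniformly, with constants depending only on $d,\kappa,\underline s,\overline s$.

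\emph{Near field (a).} Set $r_0:=z_0/\kappa$. For $r\le r_0$ we have $\kappa r\le z_0$, so \sref{lem:Bessel_uniform}(i) gives $C_0(\kappa r)^{-\nu}\le K_\nu(\kappa r)\le C_1(\kappa r)^{-\nu}$ uniformly in $\nu\in I_\nu$. Substituting, the factors $\kappa^{\nu}$ and $\kappa^{-\nu}$ cancel and $r^{-\nu}r^{\nu}=1$. This leaves
\[
p_-C_0\,r^{-(d+2\beta)}\le\gamma\le p_+C_1\,r^{-(d+2\beta)},
\]
so we may take $c_1=p_-C_0$ and $c_2=p_+C_1$.

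\emph{Far field (b).} For $r\ge r_0$, \sref{lem:Bessel_uniform}(ii) gives
\[
\gamma\le p_+C_2\kappa^{-1/2}\,r^{-1/2}e^{-\kappa r}\,r^{\nu-d-2\beta}.
\]
The remaining power is $r^{-d/2-\beta}$. Since $r\ge r_0$ and the exponent lies in $[-d/2-\overline s,\,-d/2-\underline s]$, this power is at most $\max(r_0^{-d/2-\underline s},r_0^{-d/2-\overline s})$. This yields $\gamma\le C'r^{-1/2}e^{-\kappa r}$, and together with $r^{-1/2}e^{-\kappa r}\le r_0^{-1/2}$ it gives the bound $C_{\rm tail}$. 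Positivity follows from $K_\nu>0$ on $(0,\infty)$ and $P>0$.

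For the two-sided bound on $[r_0,r_1]$ we use the lower bound $c_2 z^{-1/2}e^{-z}$ of \sref{lem:Bessel_uniform}(ii) on $[z_0,\kappa r_1]$. The power $r^{-d/2-\beta}$ is bounded below there by $\min(r_1^{-d/2-\underline s},r_1^{-d/2-\overline s})>0$.

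The only real subtlety is \emph{uniformity in $\beta$}. The exponent $\beta(x,y)$ varies with the point, so every constant must be taken as a sup or inf over the compact index interval rather than at fixed $\nu$. This is exactly what \sref{lem:Bessel_uniform} provides for the Bessel factor, and what continuity on a compact set provides for $P$ and for the power $r^{-d/2-\beta}$. No regularity of $s$ beyond the bounds $\underline s\le s\le\overline s$ is needed. Note that $\mathcal G$ bounded means $r\le\operatorname{diam}\mathcal G$, so one may simply take $r_1=\operatorname{diam}\mathcal G$ where needed.
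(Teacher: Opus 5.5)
Your proposal is correct and follows the paper's proof essentially step by step: the same choice $r_0=z_0/\kappa$, the same uniform bounds on the prefactor over $[\underline s,\overline s]$, the same near-field cancellation, and the same handling of the leftover power $r^{-d/2-\beta}$ in the far field via $\max(r_0^{-\nu_{\min}},r_0^{-\nu_{\max}})$; the only difference is that on $[r_0,r_1]$ the paper uses continuity on a compact set, while you use the explicit Bessel lower bound from \sref{lem:Bessel_uniform}(ii), which works equally well. One cosmetic slip: your $P(\beta)$ already contains $\kappa^{\nu}$, so after the cancellation the near-field constants should be the extrema of $P(\beta)\kappa^{-\nu}$ over $[\underline s,\overline s]$ rather than $p_\pm$, and this changes nothing in the argument.
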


\begin{proof}
Let $P(\beta):=\frac{1}{\pi^{d/2}}\frac{2^{1-\frac d2+\beta}}{|\Gamma(-\beta)|}\kappa^{\frac d2+\beta}$.  
Because $\beta\in[\underline s,\overline s]\subset(0,1)$, the function $\beta\mapsto P(\beta)$ is continuous on a compact set; therefore there exist $0<P_{\min}\le P_{\max}<\infty$ such that $P_{\min}\le P(\beta)\le P_{\max}$, $\forall\beta\in[\underline s,\overline s]$.
Next we apply \sref{lem:Bessel_uniform} for small argument with $z=\kappa r$. Choose $r_0 := z_0/\kappa$. For $0<r=|x-y|\le r_0$, we have $C_0 (\kappa r)^{-\nu} \le K_\nu(\kappa r)\le C_1(\kappa r)^{-\nu}$.
Therefore, $\tilde w_{-2s}(x,y) = P(\beta) K_\nu(\kappa r)\, r^{\frac d2+\beta}
\ge P_{\min} C_0 (\kappa r)^{-\nu} r^{\frac d2+\beta}=P_{\min} C_0 \kappa^{-\nu}$.
Since $\nu\in I_\nu$ is in a compact set, set $\widetilde c_1 := \min_{\nu\in I_\nu} P_{\min} C_0 \kappa^{-\nu} > 0$, so $\tilde w_{-2s}(x,y)\ge\widetilde c_1$ for all $0<r\le r_0$. Similarly upper bound gives $\tilde w_{-2s}(x,y)\le \widetilde c_2$ for some $\widetilde c_2>0$. Dividing by $2r^{d+2\beta}$ we get (a) with $c_1=\widetilde c_1/2$ and $c_2=\widetilde c_2/2$. For $r\ge r_0$ we have $z=\kappa r\ge z_0$ and \sref{lem:Bessel_uniform}(ii) yields
$K_\nu(\kappa r) \le C_2 (\kappa r)^{-1/2} e^{-\kappa r}$. Thus, $\tilde w_{-2s}(x,y)\le P_{\max} C_2 (\kappa r)^{-1/2} e^{-\kappa r} r^{\frac d2+\beta(x,y)}$. Hence
\begin{align*}
\gamma(x,y)
= \tfrac12 \tilde w_{-2s}(x,y) r^{-(d+2\beta(x,y))}
\le \tfrac12 P_{\max} C_2 \kappa^{-1/2} \, r^{-\nu(x,y)-1/2} e^{-\kappa r}.
\end{align*}
Since $\nu(x,y)\ge \nu_{\min}:=d/2+\underline s>0$ and $r\ge r_0>0$, $\gamma(x,y)\le C' r^{-1/2} e^{-\kappa r}$, 
with $C' := \tfrac12 C_2\kappa^{-1/2}\,\replaced[id=rev]{\max\big(r_0^{-\nu_{\min}},r_0^{-\nu_{\max}}\big)}{r_0^{-\nu_{\min}}}P_{\max}$ \added[id=rev]{(where $\nu_{\max}:=d/2+\overline s$; the maximum accounts for both $r_0\le1$ and $r_0>1$)} and $C_{\mathrm{tail}}:=C' r_0^{-1/2}e^{-\kappa r_0}.
$
The case when $r_0\leq r\leq r_1$ easily follows by the continuity of $\gamma(x,y) r^{1/2} e^{\kappa r}$ on compact sets.
\end{proof}

\subsection{Measurability of $\gamma$ and Finiteness of the Energy Seminorm $|\cdot|_{\mathbb V_{\kappa, s}}$ for Smooth Test Functions}
In this subsection we establish joint measurability of the kernel $\gamma(x,y)$ and that smooth compactly supported functions lie in $\mathbb V_{\kappa, s}$.

\begin{prop}\label{prop:meas_integrable}
Under the standing assumptions, the following hold.
\begin{enumerate}[(i)]
\item The function $(x,y)\mapsto\gamma(x,y)$ is measurable on $\mathcal G\times\mathcal G$.
\item For any $v\in C_c^\infty(\mathcal G)$ the double integral defining $|v|_{\mathbb V_{\kappa, s}}^2$ is finite\replaced[id=rev]{. Since membership in $\mathbb V_{\kappa, s}$ additionally requires $v=0$ a.e. in $\mathcal D^c_t$, it follows that $C_c^\infty(\mathcal D)\subset\mathbb V_{\kappa, s}$, where functions in $C_c^\infty(\mathcal D)$ are extended by zero to $\mathcal G$.}{, hence $C_c^\infty(\mathcal G)\subset\mathbb V_{\kappa, s}$.}
\end{enumerate}
\end{prop}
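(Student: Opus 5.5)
For (i), I would write $\gamma$ as a composition of measurable maps. On the off-diagonal set $\{(x,y)\in\mathcal G\times\mathcal G: x\neq y\}$, the map $(x,y)\mapsto(\beta(x,y),r)$ with $r=|x-y|$ is continuous, because $s\in C(\overline{\mathcal G})$. The map $(\beta,r)\mapsto P(\beta)K_{d/2+\beta}(\kappa r)\,r^{-(d/2+\beta)}$ is continuous on $[\underline s,\overline s]\times(0,\infty)$, since $K_\nu(z)$ is jointly continuous (indeed real-analytic) in $(\nu,z)$ for $z>0$ and $1/|\Gamma(-\beta)|$ is continuous for $\beta\in(0,1)$. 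Hence $\gamma$ is continuous off the diagonal. The diagonal is a closed set of $2d$-dimensional Lebesgue measure zero, so $\gamma$ is Borel measurable on $\mathcal G\times\mathcal G$ after any choice of value there.

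For (ii), take $v\in C_c^\infty(\mathcal G)$ and let $L$ be the Lipschitz constant of $v$. I would split the double integral into the regions $r\le r_0$ and $r>r_0$, with $r_0$ as in \sref{lem:gamma_two_regime}.

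In the near field, \sref{lem:gamma_two_regime}(a) gives $\gamma(x,y)\le c_2 r^{-(d+2\beta(x,y))}$. Since $\beta\le\overline s$, we have $r^{-(d+2\beta)}\le \max(1,r_0^{2\overline s-2\underline s})\,r^{-(d+2\overline s)}$. We split on $r\le1$, where the power is at most $r^{-(d+2\overline s)}$, and $1<r\le r_0$, where it is bounded by a constant. Combined with $(v(x)-v(y))^2\le L^2r^2$, the near-field integrand is bounded by $C r^{2-d-2\overline s}$. For fixed $x$, integrating over $y$ in the ball of radius $r_0$ gives $C\int_0^{r_0}\rho^{1-2\overline s}\,d\rho<\infty$, because $\overline s<1$. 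Integrating over the bounded set $\mathcal G$ in $x$ keeps this finite.

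In the far field, \sref{lem:gamma_two_regime}(b) gives $\gamma\le C_{\mathrm{tail}}$, and $(v(x)-v(y))^2\le4\|v\|_\infty^2$. So this part is at most $4C_{\mathrm{tail}}\|v\|_\infty^2|\mathcal G|^2$.

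Together these give $|v|_{\mathbb V_{\kappa,s}}<\infty$. The $L_2$ part $\|\kappa^{s}v\|_{L_2}$ is trivially finite because $\kappa^{s(x)}\le\max(\kappa^{\underline s},\kappa^{\overline s})$. If $v\in C_c^\infty(\mathcal D)$ is extended by zero, it belongs to $C_c^\infty(\mathcal G)$, since $\overline{\mathcal D}\subset\mathcal G$, and it vanishes on $\mathcal D_t^c$. Therefore $v\in\mathbb V_{\kappa,s}$.

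The only delicate point is the joint continuity of $K_\nu(z)$ in the index, used in (i). I would cite the integral representation $K_\nu(z)=\int_0^\infty e^{-z\cosh t}\cosh(\nu t)\,dt$ and apply dominated convergence on compact subsets of $(\nu,z)\in I_\nu\times(0,\infty)$.
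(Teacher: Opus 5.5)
Your proof is correct and follows the paper's argument: for (i) you use that the diagonal is null, and for (ii) you split at $r_0$ from \sref{lem:gamma_two_regime}, using a Lipschitz bound in the near field and the constant $C_{\mathrm{tail}}$ in the far field. Your near-field majorant $r^{-(d+2\overline s)}$ is in fact the correct one; the paper's bound $r^{-(d+2\beta)}\le r^{-(d+2\underline s)}$ goes the wrong way for $r<1$, although finiteness still holds since $\overline s<1$. Your off-diagonal continuity argument via the integral representation of $K_\nu$ also supplies a detail the paper leaves implicit.
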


\begin{proof}
(i) The only potential issue is the division by $|x-y|^{d+2\beta}$ in $\gamma$ which is singular on the diagonal $x=y$, but the diagonal has Lebesgue measure zero in $\mathbb R^{2d}$ and measurability extends by defining the kernel arbitrarily on the diagonal. Therefore $\gamma$ is measurable.

(ii) Finiteness for $v\in C_c^\infty$. Fix $v\in C_c^\infty(\mathcal G)$. We split

\begin{align*}
\iint_{\mathcal G\times\mathcal G} (v(x)-v(y))^2\gamma(x,y)\,dy\,dx
= \iint_{|x-y|\le r_0} + \iint_{|x-y|>r_0} =: I_{\rm near}+I_{\rm far},
\end{align*}
with $r_0$ from \sref{lem:gamma_two_regime}. From the Lemma, $\gamma(x,y)\le C_{\mathrm{tail}}$ on $\{|x-y|>r_0\}$. Then

\begin{align*}
I_{\rm far}
\le C_{\mathrm{tail}} \iint_{|x-y|>r_0} (v(x)-v(y))^2\,dy\,dx &\le 2C_{\mathrm{tail}} \iint_{|x-y|>r_0} \big(v(x)^2+v(y)^2\big)\,dy\,dx \\
&= 4 C_{\mathrm{tail}} |\mathcal G| \int_{\mathcal G} v(x)^2\,dx <\infty,
\end{align*}
because $v\in L_2(\mathcal G)$ and $\mathcal G$ is bounded. For $|x-y|\le r_0$ we apply the near-diagonal bound $\gamma(x,y)\le c_2 |x-y|^{-(d+2\beta(x,y))}\le c_2 |x-y|^{-(d+2\underline s)}$ and also use the mean value theorem $v(x)-v(y)=\nabla v(\xi)\cdot (x-y)$, for some $\xi \in [x,y]$, to have

\begin{align*}
I_{\rm near}
&\le c_2 \|\nabla v\|_{L^\infty}^2 \iint_{|x-y|\le r_0} |x-y|^2 |x-y|^{-(d+2\underline s)}\,dy\,dx \\
&= c_2 \|\nabla v\|_{L^\infty}^2 \int_{\mathcal G} \int_{0}^{r_0} r^{2} r^{d-1} r^{-(d+2\underline s)}\,dr\,dx = c_2 \|\nabla v\|_{L^\infty}^2 |\mathcal G| \int_0^{r_0} r^{1-2\underline s}\,dr.
\end{align*}
The integral $\int_0^{r_0} r^{1-2\underline s}\,dr$ converges because the exponent $1-2\underline s>-1$ (since $\underline s<1$). Thus $I_{\rm near}<\infty$. This shows that $|v|_{\mathbb V_{\kappa, s}}^2<\infty$\replaced[id=rev]{ for every $v\in C_c^\infty(\mathcal G)$. If in addition $\operatorname{supp}v\subset\mathcal D$, then $v=0$ on $\mathcal D^c_t$ and hence $v\in\mathbb V_{\kappa, s}$.}{ and hence $v\in\mathbb V_{\kappa, s}$.}
\end{proof}

\subsection{Proof of \sref{thm:V_complete}}
\label{sec:proof_V_complete}

\begin{proof}
Let $(u_n)\subset\mathbb V_{\kappa, s}$ be Cauchy. Then by definition of the norm $\|u\|_{\mathbb V_{\kappa, s}}$ both sequences $(\kappa^{s}u_n)$ in $L_2(\mathcal G)$ and $\phi_n(x,y):=u_n(x)-u_n(y)$ in $L_2(\mathcal G\times\mathcal G,\gamma\,dx\,dy)$ are Cauchy. 
Here the Hilbert space $L_2(\mathcal G\times\mathcal G,\gamma\,dx\,dy)$ is well-defined by \sref{prop:meas_integrable}, which established measurability and integrability of $\gamma$. Hence there exist $w\in L_2(\mathcal G)$ and $\Phi\in L_2(\mathcal G\times\mathcal G,\gamma\,dx\,dy)$ with $\kappa^s u_n \to w$ in $L_2(\mathcal G)$, and $\phi_n \to \Phi $ in $L_2(\mathcal G\times\mathcal G,\gamma\,dx\,dy)$. Since $\kappa^s$ is bounded above and below by positive constants, we may set $u:=\kappa^{-s}w\in L_2(\mathcal G)$, and then $u_n\to u$ in $L_2(\mathcal G)$. 
A standard subsequence argument yields $\Phi(x,y)=u(x)-u(y)$ a.e., so $|u|_{\mathbb V_{\kappa, s}}<\infty$ and $u\in\mathbb V_{\kappa, s}$. Finally $\|u_n-u\|_{\mathbb V_{\kappa, s}}^2
=\|\kappa^s u_n-w\|_{L_2(\mathcal G)}^2+\|\phi_n-\Phi\|_{L_2(\mathcal G\times\mathcal G,\gamma\,dx\,dy)}^2 \to 0$.
Thus $\mathbb V_{\kappa, s}$ is complete.
\end{proof}

\subsection{Proof of \texorpdfstring{\sref{prop:density_moll}}{}}
\label{sec:proof_density_moll}

\begin{revblock}
Throughout this subsection $\bar v$ denotes the extension by zero to $\mathbb R^d$ of a function $v\in\mathbb V_{\kappa, s}$, $\rho\in C_c^\infty(B_1(0))$ is a fixed mollifier with $\rho\ge0$ and $\int\rho=1$, $\rho_\varepsilon(x):=\varepsilon^{-d}\rho(x/\varepsilon)$, $v_\varepsilon:=\rho_\varepsilon*\bar v$, and $\Delta_h w(x):=w(x+h)-w(x)$ for $h\in\mathbb R^d$. We shall use repeatedly that mollification commutes with finite differences, $\Delta_h v_\varepsilon=\rho_\varepsilon*\Delta_h\bar v$. Since $v=0$ a.e. on $\mathcal D^c_t$ and $\mathcal D^c_t$ contains a neighbourhood of $\partial\mathcal D$, we have $\operatorname{supp}\bar v\subseteq\overline{\mathcal D}$ and $\varepsilon_0:=\operatorname{dist}(\overline{\mathcal D},\mathbb R^d\setminus\mathcal G)>0$; the approximants of \sref{prop:density_moll} are the mollifications $v_\varepsilon$, and \emph{sufficiently small} there means $\varepsilon<\varepsilon_0$, which guarantees $\mathcal D^+_\varepsilon\subset\mathcal G$. Decreasing the radius $r_0$ of \sref{lem:gamma_two_regime} if necessary, we assume from now on that $r_0\le\min\{1,\varepsilon_0/2\}$; the conclusions of \sref{lem:gamma_two_regime} are of course preserved. Finally, we extend $s$ to $\mathbb R^d$ keeping the bounds $\underline s\le s\le\overline s$ and \eqref{eq:log_holder}, with a possibly larger constant; this is a purely notational device, since every function appearing below vanishes outside $\mathcal D^+_{2\varepsilon}\subset\mathcal G$.

For variable $s$ the kernel $\gamma$ is neither translation invariant nor homogeneous near the diagonal, where by \sref{lem:gamma_two_regime} it behaves like $|x-y|^{-(d+2\beta(x,y))}$. The proof below rests on replacing this order by the order $s(x)$ frozen at one of the two points, which costs the factor
\begin{align*}
|x-y|^{-2(\beta(x,y)-s(x))}=\exp\Big(\left(s(y)-s(x)\right)\log\frac{1}{|x-y|}\Big).
\end{align*}
This stays bounded as $|x-y|\to0$ only if the modulus of continuity $\omega$ of $s$ satisfies $\omega(r)\log(1/r)=O(1)$, that is, $\omega(r)\lesssim1/\log(1/r)$. Uniform continuity alone does not suffice; the slower-decaying modulus $\omega(r)=1/\log\log(1/r)$ still tends to $0$ as $r\downarrow0$, and is therefore admissible for a uniformly continuous $s$, yet it gives $\omega(r)\log(1/r)=\log(1/r)/\log\log(1/r)\to\infty$. This is the reason for including \eqref{eq:log_holder} among the standing assumptions, and it is the step that replaces the translation invariance and the homogeneity available for constant $s$. Most other arguments in this article use only the crude bounds of \sref{lem:gamma_two_regime}, in which the variable order is majorized and minorized by the constants $\overline s$ and $\underline s$, so that two distinct values of $s$ never have to be compared; besides the present subsection, \eqref{eq:log_holder} is genuinely used only in the proof of \sref{prop:reg_variable}, where the variable order is frozen at a point by the same mechanism.

\begin{lemma}[Frozen-order comparison]
\label{lem:frozen_order}
Assume \eqref{eq:log_holder}. Then there exist constants $0<c_\ast\le C_\ast<\infty$, depending only on $d,\kappa,\underline s,\overline s$ and $C_{\log}$, such that
\begin{align}
\label{eq:frozen}
c_\ast\,|h|^{-(d+2s(x))}\;\le\;\gamma(x,x+h)\;\le\;C_\ast\,|h|^{-(d+2s(x))},\qquad 0<|h|\le r_0 .
\end{align}
Consequently, setting for $0<\varrho\le r_0$
\begin{align}
\label{eq:N_rho}
N_\varrho(w):=\int_{|h|\le\varrho}\int_{\mathbb R^d}|\Delta_h w(x)|^2\,|h|^{-(d+2s(x))}\,dx\,dh ,
\end{align}
we have, for every measurable $w$ vanishing outside $\mathcal D^+_{\varepsilon_0/2}$,
\begin{align}
\label{eq:frozen_seminorm}
c_\ast\,N_{r_0}(w)\;\le\;\iint_{\{|x-y|\le r_0\}}(w(x)-w(y))^2\,\gamma(x,y)\,dy\,dx\;\le\;C_\ast\,N_{r_0}(w) .
\end{align}
\end{lemma}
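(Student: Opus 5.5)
The proof has two layers. First, the pointwise bound \eqref{eq:frozen} follows from \sref{lem:gamma_two_regime}(a) plus a log-H\"older correction. Second, the seminorm comparison \eqref{eq:frozen_seminorm} follows from \eqref{eq:frozen} by the change of variables $y=x+h$.

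For the pointwise bound, take $0<|h|\le r_0$ and $y=x+h$. Lemma~\ref{lem:gamma_two_regime}(a) gives $c_1|h|^{-(d+2\beta(x,y))}\le\gamma(x,y)\le c_2|h|^{-(d+2\beta(x,y))}$. I then write
\[
|h|^{-(d+2\beta(x,y))}=|h|^{-(d+2s(x))}\,\exp\Big((s(y)-s(x))\log\tfrac1{|h|}\Big).
\]
We reduced to $r_0\le1$, so $\log(1/|h|)\ge0$ and $\log(1/|h|)\le\log(e+1/|h|)$. Hence \eqref{eq:log_holder} bounds the exponent in absolute value by $C_{\log}$. The exponential factor therefore lies in $[e^{-C_{\log}},e^{C_{\log}}]$.

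One caveat: $y=x+h$ might leave $\overline{\mathcal G}$. This is why $s$ was extended to $\mathbb R^d$ with the same bounds and log-H\"older condition. For the seminorm statement only points of $\mathcal G$ matter anyway, since $w$ vanishes near $\partial\mathcal G$.

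The pointwise bound then holds with $c_\ast=c_1e^{-C_{\log}}$ and $C_\ast=c_2e^{C_{\log}}$. These depend only on $d,\kappa,\underline s,\overline s,C_{\log}$, because $c_1,c_2$ do. If $r_0$ was decreased, the constants of Lemma~\ref{lem:gamma_two_regime} remain valid.

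For \eqref{eq:frozen_seminorm}, let $w$ vanish outside $\mathcal D^+_{\varepsilon_0/2}$ and extend it by zero to $\mathbb R^d$. The middle quantity is an integral over pairs $(x,y)\in\mathcal G\times\mathcal G$ with $|x-y|\le r_0$. I compare it with the same integrand over all of $\mathbb R^d\times\mathbb R^d$ with $|x-y|\le r_0$.

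If $(w(x)-w(y))^2\ne0$, then one of the points, say $x$, lies in $\mathcal D^+_{\varepsilon_0/2}$. Since $|x-y|\le r_0\le\varepsilon_0/2$, the other point satisfies $\operatorname{dist}(y,\mathcal D)\le\varepsilon_0$. I need it in $\mathcal G$, so I use the margin: $r_0\le\varepsilon_0/2$ and the distance from $\overline{\mathcal D}$ to the complement of $\mathcal G$ is $\varepsilon_0$. Strictly, $\operatorname{dist}(y,\mathcal D)\le\varepsilon_0$ allows equality, but that is a null-set or closure issue that can be handled by taking $r_0<\varepsilon_0/2$. So every pair with nonzero integrand already lies in $\mathcal G\times\mathcal G$, and the two integrals agree.

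Substituting $y=x+h$ (Jacobian one) and using Tonelli for the nonnegative integrand gives
\[
\int_{|h|\le r_0}\int_{\mathbb R^d}(\Delta_hw(x))^2\gamma(x,x+h)\,dx\,dh.
\]
Inserting \eqref{eq:frozen} yields both bounds in terms of $N_{r_0}(w)$. Measurability is guaranteed by Proposition~\ref{prop:meas_integrable}.

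The only delicate step is the exponent control. It needs the sign of $\log(1/|h|)$, which is why $r_0\le1$, and it must hold for points outside $\overline{\mathcal G}$, which is why $s$ is extended. Everything else is bookkeeping.
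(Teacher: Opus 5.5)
Your proposal is correct and follows essentially the same route as the paper. Both arguments use Lemma~\ref{lem:gamma_two_regime}(a) and the rewriting $|h|^{-(d+2\beta(x,x+h))}=|h|^{-(d+2s(x))}\exp\big((s(x+h)-s(x))\log\frac{1}{|h|}\big)$, controlled by \eqref{eq:log_holder} via $r_0\le 1$, which gives $c_\ast=c_1e^{-C_{\log}}$ and $C_\ast=c_2e^{C_{\log}}$; then the substitution $y=x+h$ and the support condition with $r_0\le\varepsilon_0/2$ show that both integrals are taken over the same pairs. Your remark on the boundary case $\operatorname{dist}(y,\mathcal D)=\varepsilon_0$ addresses a point the paper passes over silently, and it is indeed harmless: any pair with nonzero integrand and a point outside $\mathcal G$ must satisfy $|x-y|=r_0$ exactly, so these pairs form a null set in $\mathbb R^{2d}$.
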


\begin{proof}
By \sref{lem:gamma_two_regime}(a) there are $0<c_1\le c_2$ with $c_1|h|^{-(d+2\beta(x,x+h))}\le\gamma(x,x+h)\le c_2|h|^{-(d+2\beta(x,x+h))}$ for $0<|h|\le r_0$. Since $\beta(x,x+h)-s(x)=\tfrac12\left(s(x+h)-s(x)\right)$,
\begin{align*}
|h|^{-(d+2\beta(x,x+h))}=|h|^{-(d+2s(x))}\,\exp\Big(\left(s(x+h)-s(x)\right)\log\frac{1}{|h|}\Big),
\end{align*}
and, by \eqref{eq:log_holder} and $|h|\le r_0\le1$,
\begin{align*}
\left|s(x+h)-s(x)\right|\log\frac{1}{|h|}\le\left|s(x+h)-s(x)\right|\log\Big(e+\frac{1}{|h|}\Big)\le C_{\log} .
\end{align*}
Hence the exponential factor lies in $[e^{-C_{\log}},e^{C_{\log}}]$, which gives \eqref{eq:frozen} with $c_\ast=c_1e^{-C_{\log}}$ and $C_\ast=c_2e^{C_{\log}}$. For \eqref{eq:frozen_seminorm} we substitute $y=x+h$ and note that, since $r_0\le\varepsilon_0/2$ and $w$ vanishes outside $\mathcal D^+_{\varepsilon_0/2}$, the difference $\Delta_hw(x)$ vanishes for $|h|\le r_0$ unless both $x$ and $x+h$ lie in $\mathcal G$; the double integral in \eqref{eq:frozen_seminorm} and the integral \eqref{eq:N_rho} are therefore taken over the same set of pairs, and \eqref{eq:frozen} applies pointwise.
\end{proof}

We can now prove \sref{prop:density_moll}.

\begin{proof}
Let $0<\varepsilon<\varepsilon_0$. If $\operatorname{dist}(x,\mathcal D)>\varepsilon$ then $\bar v$ vanishes a.e. on $B_\varepsilon(x)$, whence $v_\varepsilon(x)=\int_{B_\varepsilon(x)}\rho_\varepsilon(x-z)\bar v(z)\,dz=0$; therefore $\operatorname{supp}v_\varepsilon\subseteq\mathcal D^+_\varepsilon$, and $\mathcal D^+_\varepsilon\subset\mathcal G$ by the definition of $\varepsilon_0$; hence $v_\varepsilon\in C_c^\infty(\mathcal G)$ and $v_\varepsilon$ vanishes on $\mathcal D^c_t\setminus\mathcal C_\varepsilon$. Moreover $\mathcal D^+_\varepsilon\downarrow\overline{\mathcal D}$ as $\varepsilon\downarrow0$, and $|\partial\mathcal D|=0$ because $\mathcal D$ is a bounded Lipschitz domain, so $|\mathcal C_\varepsilon|\to0$. There remains the convergence of the energy norms, which concerns the limit $\varepsilon\downarrow0$; we may therefore assume from now on that $\varepsilon<r_0/4$, recalling that $r_0\le\min\{1,\varepsilon_0/2\}$.

Set $w_\varepsilon:=v_\varepsilon-v$, extended by zero, so that $w_\varepsilon$ vanishes outside $\mathcal D^+_\varepsilon\subset\mathcal D^+_{\varepsilon_0/2}$ and $\|w_\varepsilon\|_{L_2}\to0$ by the $L_2$-continuity of mollification. For the mass term, $\|\kappa^{s(\cdot)}w_\varepsilon\|_{L_2(\mathcal G)}\le\big(\sup_{\overline{\mathcal G}}\kappa^{s}\big)\|w_\varepsilon\|_{L_2}\to0$. For the seminorm we split at the radius $r_0$, as in the classical argument. On $\{|x-y|>r_0\}$ the kernel is bounded by a constant $C_{\rm tail}$ by \sref{lem:gamma_two_regime}(b), whence
\begin{align*}
\iint_{\{|x-y|>r_0\}}(w_\varepsilon(x)-w_\varepsilon(y))^2\gamma(x,y)\,dy\,dx\le 4\,C_{\rm tail}\,|\mathcal G|\,\|w_\varepsilon\|_{L_2}^2\longrightarrow0 .
\end{align*}
By \eqref{eq:frozen_seminorm} it therefore suffices to show that $N_{r_0}(w_\varepsilon)\to0$, and we split this quantity at the mollification scale $N_{r_0}(w_\varepsilon)=N_{\varepsilon}(w_\varepsilon)+\big(N_{r_0}(w_\varepsilon)-N_{\varepsilon}(w_\varepsilon)\big)$.
Note that, by \eqref{eq:frozen_seminorm} applied to $\bar v$, we have $N_{r_0}(\bar v)\le c_\ast^{-1}|v|_{\mathbb V_{\kappa, s}}^2<\infty$, so that $N_{\varrho}(\bar v)\to0$ as $\varrho\downarrow0$ by dominated convergence. This finiteness is what drives both steps below.

Since $|\Delta_hw_\varepsilon|^2\le2|\Delta_hv_\varepsilon|^2+2|\Delta_h\bar v|^2$, the contribution of $\bar v$ is at most $2N_\varepsilon(\bar v)\to0$. For the contribution of $v_\varepsilon$ we exploit that $\int_{\mathbb R^d}\left(\rho_\varepsilon(y+h)-\rho_\varepsilon(y)\right)dy=0$, which allows us to subtract the value at $x$
\begin{align*}
\Delta_hv_\varepsilon(x)=\int_{\mathbb R^d}\big[\rho_\varepsilon(y+h)-\rho_\varepsilon(y)\big]\,\big[\bar v(x-y)-\bar v(x)\big]\,dy .
\end{align*}
We now bound the first bracket, for $|h|\le\varepsilon$, in two ways. Since $\operatorname{supp}\rho_\varepsilon\subseteq\overline{B_\varepsilon(0)}$, the term $\rho_\varepsilon(y)$ vanishes unless $|y|\le\varepsilon$, while $\rho_\varepsilon(y+h)$ vanishes unless $|y+h|\le\varepsilon$, hence unless $|y|\le|y+h|+|h|\le2\varepsilon$; the bracket therefore vanishes for $|y|>2\varepsilon$, and the integral may be restricted to the ball $\{|y|\le2\varepsilon\}$. Moreover $\nabla\rho_\varepsilon(z)=\varepsilon^{-d-1}(\nabla\rho)(z/\varepsilon)$, so that $\|\nabla\rho_\varepsilon\|_{L_\infty}=\varepsilon^{-d-1}\|\nabla\rho\|_{L_\infty}$, and the mean value theorem gives $\left|\rho_\varepsilon(y+h)-\rho_\varepsilon(y)\right|\le|h|\,\|\nabla\rho_\varepsilon\|_{L_\infty}=|h|\,\varepsilon^{-d-1}\|\nabla\rho\|_{L_\infty}$.
Applying the Cauchy--Schwarz inequality on $\{|y|\le2\varepsilon\}$, whose volume is $c_d\,\varepsilon^{d}$, we obtain
\begin{align*}
|\Delta_hv_\varepsilon(x)|^2
\le |h|^2\varepsilon^{-2d-2}\|\nabla\rho\|_{L_\infty}^2\cdot c_d\varepsilon^{d}\int_{|y|\le2\varepsilon}|\bar v(x-y)-\bar v(x)|^2\,dy ,
\end{align*}
that is,
\begin{align*}
|\Delta_hv_\varepsilon(x)|^2\le C\,\frac{|h|^2}{\varepsilon^{2}}\,G_\varepsilon(x),\qquad
G_\varepsilon(x):=\varepsilon^{-d}\int_{|y|\le2\varepsilon}|\bar v(x-y)-\bar v(x)|^2\,dy ,
\end{align*}
with $C=C(d,\rho)$. Because $2-2s(x)\ge2-2\overline s>0$, we have $\int_{|h|\le\varepsilon}|h|^{2-(d+2s(x))}dh\le C(d,\overline s)\,\varepsilon^{2-2s(x)}$, and therefore
\begin{align*}
\int_{|h|\le\varepsilon}\int_{\mathbb R^d}|\Delta_hv_\varepsilon(x)|^2\,|h|^{-(d+2s(x))}\,dx\,dh
\le C\int_{\mathbb R^d}G_\varepsilon(x)\,\varepsilon^{-2s(x)}\,dx .
\end{align*}
It is precisely here that the frozen order $s(x)$, rather than $\beta(x,y)$, makes the computation possible (the exponent no longer depends on the integration variable $h$). For $|y|\le2\varepsilon$ and $\varepsilon<1$ we have $\varepsilon^{-d}\le2^{d}|y|^{-d}$ and $\varepsilon^{-2s(x)}\le2^{2\overline s}|y|^{-2s(x)}$, so that, by Tonelli's theorem and the substitution $h=-y$,
\begin{align*}
\int_{\mathbb R^d}G_\varepsilon(x)\,\varepsilon^{-2s(x)}\,dx
\le C\int_{|y|\le2\varepsilon}\int_{\mathbb R^d}|\bar v(x-y)-\bar v(x)|^2\,|y|^{-(d+2s(x))}\,dx\,dy
= C\,N_{2\varepsilon}(\bar v),
\end{align*}
which tends to $0$ as $\varepsilon\downarrow0$ since $2\varepsilon<r_0$. As for the coarse scales $\varepsilon<|h|\le r_0$, fix $h$ and put $g_h:=\Delta_h\bar v\in L_2(\mathbb R^d)$ and $\mu_h(x):=|h|^{-2s(x)}$, so that $\Delta_hw_\varepsilon=\rho_\varepsilon*g_h-g_h$, $|h|^{-(d+2s(x))}=|h|^{-d}\mu_h(x)$ and
\begin{align*}
N_{r_0}(w_\varepsilon)-N_{\varepsilon}(w_\varepsilon)=\int_{\varepsilon<|h|\le r_0}F_\varepsilon(h)\,\frac{dh}{|h|^{d}},
\qquad
F_\varepsilon(h):=\int_{\mathbb R^d}\left|\rho_\varepsilon*g_h-g_h\right|^2\mu_h\,dx .
\end{align*}
For each fixed $h\neq0$ the weight $\mu_h$ is bounded above and below by positive constants, so $L_2(\mu_h\,dx)$ carries the same topology as $L_2$ and $F_\varepsilon(h)\to0$ as $\varepsilon\downarrow0$, again by the $L_2$-continuity of mollification. To dominate $F_\varepsilon$ uniformly, we use Jensen's inequality (recall $\rho_\varepsilon\ge0$ and $\int\rho_\varepsilon=1$) together with Tonelli's theorem,
\begin{align*}
\int_{\mathbb R^d}|\rho_\varepsilon*g_h|^2\mu_h\,dx
\le\int_{\mathbb R^d}|g_h(u)|^2\Big(\int_{\mathbb R^d}\rho_\varepsilon(z)\,\mu_h(u+z)\,dz\Big)du ,
\end{align*}
and observe that, for $|z|\le\varepsilon<|h|\le r_0\le1$,
\begin{align*}
\frac{\mu_h(u+z)}{\mu_h(u)}=\exp\Big(2\left(s(u+z)-s(u)\right)\log\frac{1}{|h|}\Big)
\le\exp\Big(2\left|s(u+z)-s(u)\right|\log\Big(e+\frac{1}{\varepsilon}\Big)\Big)\le e^{2C_{\log}},
\end{align*}
where the last inequality follows from \eqref{eq:log_holder} and $|z|\le\varepsilon$. This is the only place where the restriction $|h|>\varepsilon$ is used, and it is what makes the argument work for a variable order. Consequently $F_\varepsilon(h)\le2\big(1+e^{2C_{\log}}\big)\int|g_h|^2\mu_h\,dx$, and the majorant is integrable, $\int_{|h|\le r_0}\Big(\int_{\mathbb R^d}|g_h|^2\mu_h\,dx\Big)\frac{dh}{|h|^{d}}=N_{r_0}(\bar v)<\infty$.
Dominated convergence therefore gives $N_{r_0}(w_\varepsilon)-N_{\varepsilon}(w_\varepsilon)\to0$, which completes the proof.
\end{proof}
\end{revblock}

\subsection{Proof of \sref{thm:V_kappa_equiv}}

\begin{proof}
Write $\gamma_{\kappa}(x,y)=C(\beta)\,\kappa^{\nu}K_{\nu}(\kappa r)r^{-\nu}$ with $r=|x-y|$ and $\nu=d/2+\beta$. The argument splits into the $L_2$ mass term comparison and the seminorm comparison.

\emph{Mass term.} Since $s\in C(\overline{\mathcal G})$ and $0<\underline s\le s\le\overline s<1$, the continuous map $x\mapsto(\kappa/\kappa')^{s(x)}$ attains positive extrema on $\overline{\mathcal G}$, so there are $0<m_0\le M_0<\infty$ with $m_0\le(\kappa/\kappa')^{s(x)}\le M_0$ for all $x$. Multiplying by $\kappa'^{s(x)}|v(x)|$, squaring and integrating yields
\begin{align*}
m_0^2\|\kappa'^{s(\cdot)}v\|_{L_2(\mathcal G)}^2\le\|\kappa^{s(\cdot)}v\|_{L_2(\mathcal G)}^2\le M_0^2\|\kappa'^{s(\cdot)}v\|_{L_2(\mathcal G)}^2.
\end{align*}

\emph{Seminorm.} Let $\nu_{\min}=d/2+\underline s$, $\nu_{\max}=d/2+\overline s$ and take $z_0,C_0,C_1,C_2$ from \sref{lem:Bessel_uniform}. Set $r_{\min}=\min\{z_0/\kappa,z_0/\kappa'\}$, and $r_{\max}=\max\{z_0/\kappa,z_0/\kappa'\}$. Since $\mathcal G$ is bounded, $r\in(0,|\mathcal G|]$, and we consider three regimes.

(i) If $0<r\le r_{\min}$ then $\kappa r,\kappa' r\le z_0$ and the small--$z$ bounds give $C_0(\kappa r)^{-\nu}\le K_\nu(\kappa r)\le C_1(\kappa r)^{-\nu}$ (and likewise for $\kappa'$). Hence there exist $c_{\rm near},C_{\rm near}>0$ (depending only on $d,\underline s,\overline s$ and the Bessel constants) such that for  $0<r\le r_{\min}$,  $c_{\rm near}\,\gamma_{\kappa'}(x,y)\le\gamma_{\kappa}(x,y)\le C_{\rm near}\,\gamma_{\kappa'}(x,y)$.
(ii) If $r_{\min}<r<r_{\max}$ then one of $\kappa r,\kappa' r$ is $\le z_0$ and the other $\ge z_0$. The ratio $\gamma_\kappa(x,y)/\gamma_{\kappa'}(x,y)=\kappa^{\nu}K_\nu(\kappa r)/\kappa'^{\nu}K_\nu(\kappa' r)$ is continuous on the compact set $[r_{\min},r_{\max}]\times[\nu_{\min},\nu_{\max}]$, so it is bounded above and below by positive constants $c_{\rm mixed},C_{\rm mixed}$. Using the mass-term bounds to control any residual $\kappa$--powers if needed, we obtain $c_{\rm mixed}\,\gamma_{\kappa'}(x,y)\le\gamma_{\kappa}(x,y)\le C_{\rm mixed}\,\gamma_{\kappa'}(x,y)$ for $r_{\min}<r<r_{\max}$.
(iii) If $r\ge r_{\max}$ then $\kappa r,\kappa' r\ge z_0$ and the large--$z$ bound $K_\nu(z)\lesssim z^{-1/2}e^{-z}$ yields for suitable $\underline C,\overline C>0$, $
\underline C\,\kappa^{\nu-1/2}r^{-(\nu+1/2)}e^{-\kappa r}\le\gamma_\kappa(x,y)\le\overline C\,\kappa^{\nu-1/2}r^{-(\nu+1/2)}e^{-\kappa r}$,
and similarly for $\gamma_{\kappa'}$. Dividing these displays and using $\nu\in[\nu_{\min},\nu_{\max}]$ together with the mass bounds to absorb powers of $(\kappa/\kappa')$, we get constants $c_{\rm tail},C_{\rm tail}>0$ (depending only on $d,\underline s,\overline s,m_0,M_0,|\mathcal G|$) such that $c_{\rm tail}\,\gamma_{\kappa'}(x,y)\le\gamma_{\kappa}(x,y)\le C_{\rm tail}\,\gamma_{\kappa'}(x,y)$ for $r\ge r_{\max})$. Combining the three regimes gives global constants $0<c\le C<\infty$ (depending only on $d,\underline s,\overline s,m_0,M_0,|\mathcal G|$) with
\begin{align*}
c\,\gamma_{\kappa'}(x,y)\le\gamma_{\kappa}(x,y)\le C\,\gamma_{\kappa'}(x,y)\quad\text{for a.e. }(x,y)\in\mathcal G\times\mathcal G.
\end{align*}
Multiplying by $(v(x)-v(y))^2$ and integrating yields $c|v|_{\mathbb V_{\kappa',s}}^2\le|v|_{\mathbb V_{\kappa,s}}^2\le C|v|_{\mathbb V_{\kappa',s}}^2$. Together with the mass-term estimate above this proves the claimed norm equivalence.
\end{proof}

\subsection{Proof of \sref{thm:form_rep}}
\label{sec:proof_form_rep}

\begin{lemma}[Equivalence of form norm and $\mathbb V_{\kappa, s}$-norm]\label{lem:norm_equiv}
Let $\mathcal A^{s(\cdot)}(\cdot,\cdot)$ be the bilinear form,  define the form-norm $\|u\|_{\mathcal A^{s(\cdot)}}:=\big(\mathcal A^{s(\cdot)}(u,u)+\|u\|_{L_2(\mathcal G)}^2\big)^{1/2}$,
and recall the energy norm $\|u\|_{\mathbb V_{\kappa, s}}:=\mathcal A(u,u)^{1/2}$. Suppose $\mathcal A^{s(\cdot)}$ is coercive, that is, there exists $c_{\rm coerc}>0$ with $\mathcal A^{s(\cdot)}(u,u) \ge c_{\rm coerc} \|u\|_{L_2(\mathcal G)}^2$, $\forall u\in\mathbb V_{\kappa, s}$,
where in our setting we can take \replaced[id=rev]{$c_{\rm coerc}=\min(\kappa^{2\underline s},\kappa^{2\overline s})$, since the mass term gives $\mathcal A^{s(\cdot)}(u,u)\ge\int_{\mathcal D}\kappa^{2s(x)}u(x)^2\,dx\ge\inf_{x}\kappa^{2s(x)}\|u\|_{L_2(\mathcal G)}^2$ and $s\mapsto\kappa^{2s}$ is increasing for $\kappa\ge1$ and decreasing for $\kappa<1$}{$c_{\rm coerc}=\min(1,\kappa^{2\underline s})$}. Then for all $u\in\mathbb V_{\kappa, s}$ the following norm-equivalence holds
$\|u\|_{\mathbb V_{\kappa, s}} \le \|u\|_{\mathcal A^{s(\cdot)}} \le \sqrt{1+1/c_{\rm coerc}}\; \|u\|_{\mathbb V_{\kappa, s}}$.
Consequently completeness of $(\mathbb V_{\kappa, s},\|\cdot\|_{\mathbb V_{\kappa, s}})$ implies completeness of $(\mathbb V_{\kappa, s},\|\cdot\|_{\mathcal A^{s(\cdot)}})$ and hence the form $\mathcal A^{s(\cdot)}$ is closed.
\end{lemma}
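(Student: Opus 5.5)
The lemma follows from two elementary inequalities between $\|u\|_{\mathbb V_{\kappa,s}}^2=\mathcal A^{s(\cdot)}(u,u)$ and $\|u\|_{\mathcal A^{s(\cdot)}}^2=\mathcal A^{s(\cdot)}(u,u)+\|u\|_{L_2(\mathcal G)}^2$, plus the fact that norm equivalence preserves completeness.

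\emph{Coercivity constant.} For $u\in\mathbb V_{\kappa,s}$ we have $u=0$ a.e.\ on $\mathcal D^c_t$. Hence $\int_{\mathcal D}\kappa^{2s(x)}u^2\,dx=\int_{\mathcal G}\kappa^{2s(x)}u^2\,dx$. The double integral in $\mathcal A^{s(\cdot)}(u,u)$ is nonnegative because $\gamma>0$. Therefore
\begin{align*}
\mathcal A^{s(\cdot)}(u,u)\ge \inf_{x\in\overline{\mathcal G}}\kappa^{2s(x)}\,\|u\|_{L_2(\mathcal G)}^2 .
\end{align*}
Since $s(x)\in[\underline s,\overline s]$ and $t\mapsto\kappa^{2t}$ is monotone, this infimum is at least $\min(\kappa^{2\underline s},\kappa^{2\overline s})=c_{\rm coerc}>0$.

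\emph{Two-sided bound.} The lower inequality $\|u\|_{\mathbb V_{\kappa,s}}\le\|u\|_{\mathcal A^{s(\cdot)}}$ is immediate, because we only add the nonnegative term $\|u\|_{L_2}^2$. For the upper inequality, coercivity gives $\|u\|_{L_2}^2\le c_{\rm coerc}^{-1}\mathcal A^{s(\cdot)}(u,u)$, so $\|u\|_{\mathcal A^{s(\cdot)}}^2\le(1+1/c_{\rm coerc})\,\|u\|_{\mathbb V_{\kappa,s}}^2$. Taking square roots gives the stated bound.

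The only subtlety is notational. The mass term in $\mathcal A^{s(\cdot)}$ is over $\mathcal D$, while the norm $\|\cdot\|_{\mathbb V_{\kappa,s}}$ was defined with $\|\kappa^{s}v\|_{L_2(\mathcal G)}$. These coincide on $\mathbb V_{\kappa,s}$ by the volume constraint, so the identification $\|u\|_{\mathbb V_{\kappa,s}}^2=\mathcal A^{s(\cdot)}(u,u)$ is legitimate.

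\emph{Completeness and closedness.} A Cauchy sequence in $\|\cdot\|_{\mathcal A^{s(\cdot)}}$ is Cauchy in $\|\cdot\|_{\mathbb V_{\kappa,s}}$ by the lower inequality. By \sref{thm:V_complete} it converges in $\mathbb V_{\kappa,s}$, and by the upper inequality it also converges in $\|\cdot\|_{\mathcal A^{s(\cdot)}}$. Thus $(\mathbb V_{\kappa,s},\|\cdot\|_{\mathcal A^{s(\cdot)}})$ is complete. A nonnegative symmetric form whose domain is complete under the form norm is closed by definition (Kato). I do not expect a genuine obstacle here.
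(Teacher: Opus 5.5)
Your proposal is correct and follows essentially the same route as the paper's proof: the lower bound by discarding $\|u\|_{L_2(\mathcal G)}^2$, the upper bound from coercivity $\|u\|_{L_2(\mathcal G)}^2\le c_{\rm coerc}^{-1}\mathcal A^{s(\cdot)}(u,u)$, and completeness (hence closedness) transferred through the norm equivalence. Your derivation of $c_{\rm coerc}$ and your remark that the mass term over $\mathcal D$ equals $\|\kappa^{s(\cdot)}u\|_{L_2(\mathcal G)}^2$ on $\mathbb V_{\kappa,s}$ (by the volume constraint) spell out what the paper leaves implicit when it writes $\|u\|_{\mathbb V_{\kappa,s}}^2=\mathcal A^{s(\cdot)}(u,u)$.
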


\begin{proof}
Since $\|u\|_{\mathcal A^{s(\cdot)}}^2=\mathcal A^{s(\cdot)}(u,u)+\|u\|_{L_2(\mathcal G)}^2 \ge \mathcal A^{s(\cdot)}(u,u)=\|u\|_{\mathbb V_{\kappa, s}}^2$, the left inequality is immediate. For the right inequality we use the coercivity $\mathcal A^{s(\cdot)}(u,u)\ge c_{\rm coerc}\|u\|_{L_2(\mathcal G)}^2$. Notice that $\mathcal A^{s(\cdot)}(u,u)=\|u\|_{\mathbb V_{\kappa, s}}^2$. In particular, $\mathcal A^{s(\cdot)}(u,u) \ge \replaced[id=rev]{\min(\kappa^{2\underline s},\kappa^{2\overline s})}{\min(1,\kappa^{2\underline s})} \|u\|_{L_2(\mathcal G)}^2$. Equivalently $\|u\|_{L_2(\mathcal G)}^2 \le \mathcal A^{s(\cdot)}(u,u)/c_{\rm coerc} = \|u\|_{\mathbb V_{\kappa, s}}^2 / c_{\rm coerc}$. Then $\|u\|_{\mathcal A^{s(\cdot)}}^2 = \mathcal A^{s(\cdot)}(u,u) + \|u\|_{L_2(\mathcal G)}^2 \le (1+1/c_{\rm coerc}) \|u\|_{\mathbb V_{\kappa, s}}^2$. Taking square roots gives the right-hand inequality. The norm equivalence implies completeness is preserved, since if $(u_n)$ is Cauchy in $\|\cdot\|_{\mathcal A^{s(\cdot)}}$ then it is Cauchy in $\|\cdot\|_{\mathbb V_{\kappa, s}}$ (by the left inequality) and thus converges in $\mathbb V_{\kappa, s}$; conversely completeness in $\|\cdot\|_{\mathbb V_{\kappa, s}}$ implies completeness in $\|\cdot\|_{\mathcal A^{s(\cdot)}}$ using the right inequality. Thus the form is closed.
\end{proof}

Now, we are ready to prove \sref{thm:form_rep}.
\begin{proof}
Parts (i) and  (ii) are standard consequences of the first representation theorem (Kato--Friedrichs) for closed, symmetric, coercive forms; see \cite[Ch. VI]{Kato1995}. In particular the form-to-operator correspondence gives a positive self-adjoint operator $A$ with $\operatorname{Dom}(A^{1/2})=\mathbb V_{\kappa, s}$ and $\mathcal A^{s(\cdot)}(u,v)=\langle A^{1/2}u,A^{1/2}v \rangle_{L_2(\mathcal G)}$. We verify hypotheses of Kato--Friedrichs first representation theorem.
\begin{itemize}
\item $\mathcal A^{s(\cdot)}$ is closed by \sref{lem:norm_equiv}.

\item $\mathcal A^{s(\cdot)}$ is symmetric and bilinear on $\mathbb V_{\kappa, s}$ by bilinearity and symmetry of $\gamma$.

\item $\mathcal A^{s(\cdot)}$ is continuous and coercive on $\mathbb V_{\kappa, s}$. For continuity, applying the Cauchy--Schwarz inequality twice
\begin{align*}
\lvert\mathcal A^{s(\cdot)}(u,v)\rvert
&\le \bigg\lvert\int_{\mathcal D}\kappa^{2s(x)}u(x)v(x)\,dx\bigg\rvert
  + \bigg\lvert\iint_{\mathcal G\times\mathcal G}(u(x)-u(y))(v(x)-v(y))\gamma(x,y)\,dy\,dx\bigg\rvert\\
&\le \|\kappa^{s}u\|_{L^2(\mathcal G)}\|\kappa^{s}v\|_{L^2(\mathcal G)}
  + |u|_{\mathbb V_{\kappa,s}}\,|v|_{\mathbb V_{\kappa,s}}\le 2\,\|u\|_{\mathbb V_{\kappa,s}}\,\|v\|_{\mathbb V_{\kappa,s}}.
\end{align*}

For coercivity see the proof of \sref{lem:norm_equiv}.

\end{itemize}
For the statement in (ii) with $f\in L_2(\mathcal G)$, it defines a linear bounded functional $v\in L_2(\mathcal G)\mapsto \langle f,v\rangle_{L_2(\mathcal G)}$, hence $L_2(\mathcal G)$ is (canonically) embedded into the dual space $V_{\kappa, s}'$ and therefore the Riesz isomorphism associated to $\mathcal A^{s(\cdot)}$ yields a unique $u\in\mathbb V_{\kappa, s}$ with $\mathcal A^{s(\cdot)}(u,v)=\langle f,v\rangle_{L_2(\mathcal G)}$ for all $v\in\mathbb V_{\kappa, s}$; equivalently $u=A^{-1}f$. The same argument (replacing $f$ by a general $F\in\mathbb V_{\kappa, s}'$) gives the more general $\mathbb V_{\kappa, s}'$-right-hand-side claim, since the map $u\mapsto\mathcal A^{s(\cdot)}(u,\cdot)$ is an isomorphism $\mathbb V_{\kappa, s}\to\mathbb V_{\kappa, s}'$ by coercivity. \added[id=rev]{The two uses of the Riesz representation theorem (in $L_2(\mathcal G)$ and in $\mathbb V_{\kappa,s}$) and the resulting extension of $A^{-1}$ to $\mathbb V_{\kappa,s}'$, including white-noise right-hand sides, are summarized in \sref{rem:riesz_gelfand}.} 
\end{proof}

\subsection{Proof of \sref{prop:form_compare_compact}}
\begin{proof}
By \sref{lem:gamma_two_regime} (using the small- and large-argument asymptotics of the modified Bessel function $K_\nu$ and uniformity in $\beta\in[\underline s,\overline s]$) there exists $r_0\in(0,1]$ and constants $c_1,c_2>0$, $C_{\mathrm tail}\ge0$ such that for every $x,y\in\mathcal G$ with $r:=|x-y|$,
\begin{align}
\label{eq:gamma_bounds}
c_1\, r^{-(d+2\underline s)}\mathbf{1}_{\{r\le r_0\}}(x,y)
\le \gamma(x,y)
\le c_2\, r^{-(d+2\overline s)}\mathbf{1}_{\{r\le r_0\}}(x,y) \;+\; C_{\mathrm tail}\,\mathbf{1}_{\{r>r_0\}}(x,y).
\end{align}
We choose $r_0\le1$ so that the algebraic ordering of power-law kernels is uniform on $\{r\le r_0\}$.
From the left inequality in \eqref{eq:gamma_bounds} we obtain, \replaced[id=rev]{for every $v\in L_2(\mathcal G)$ (all the integrals below are integrals of non-negative functions, so that both sides are well defined in $[0,\infty]$ and Tonelli's theorem applies)}{for every $v\in C_c^\infty(\mathcal G)$},
\begin{align*}
|v|_{\mathbb V_{\kappa, s}}^2
\;=\; \iint_{\mathcal G\times\mathcal G} (v(x)-v(y))^2 \gamma(x,y)\,dy\,dx
\;\ge\; c_1\iint_{r\le r_0} \frac{(v(x)-v(y))^2}{r^{d+2\underline s}}\,dy\,dx.
\end{align*}
Introduce the full fractional seminorm $|v|_{H^{\underline s}(\mathcal G)}^2
= \iint_{\mathcal G\times\mathcal G} \frac{(v(x)-v(y))^2}{r^{d+2\underline s}}\,dy\,dx
= I_{\le r_0} + I_{>r_0}$,
where $I_{\le r_0}$ and $I_{>r_0}$ are the integrals over $\{r\le r_0\}$ and $\{r>r_0\}$ respectively. The previous display gives
$I_{\le r_0} \le c_1^{-1}\,|v|_{\mathbb V_{\kappa, s}}^2$.
For the integral $I_{>r_0}$ note that for $r>r_0$ we have the trivial bound $r^{-(d+2\underline s)}\le r_0^{-(d+2\underline s)}$. Using $(v(x)-v(y))^2\le 2|v(x)|^2+2|v(y)|^2$ and Fubini,
\begin{align*}
I_{>r_0}
\le r_0^{-(d+2\underline s)} \iint_{r>r_0} \big(2|v(x)|^2 + 2|v(y)|^2\big)\,dy\,dx
= 4 r_0^{-(d+2\underline s)} |\mathcal G| \,\|v\|_{L^2(\mathcal G)}^2.
\end{align*}
Set $C := 4 r_0^{-(d+2\underline s)} |\mathcal G|$. Combining the two pieces yields
$c_1 |v|_{H^{\underline s}}^2
\le \,|v|_{\mathbb V_{\kappa, s}}^2 + c_1 C \|v\|_{L^2(\mathcal G)}^2$.
By continuity of $s$ on $\overline{\mathcal G}$ there exist constants $m:=\inf_{x\in\mathcal G}\kappa^{s(x)}>0$, $M:=\sup_{x\in\mathcal G}\kappa^{s(x)}<\infty$, and thus $\|v\|_{L^2(\mathcal G)}^2 \le \frac{1}{m^2} \|\kappa^{s(\cdot)}v\|_{L_2(\mathcal{G})}^2$. Therefore, we have
\begin{align*}
c_1 |v|_{H^{\underline s}}^2
\le \,|v|_{\mathbb V_{\kappa, s}}^2 + c_1C \|v\|_{L^2(\mathcal G)}^2\le \,|v|_{\mathbb V_{\kappa, s}}^2 + \frac{c_1C}{m^2} \|\kappa^{s(\cdot)}v\|_{L_2(\mathcal{G})}^2. 
\end{align*}
Divide both sides by $1 + c_1C/m^2$ to obtain
$ \frac{c_1}{1 + c_1C/m^2}\,|v|_{H^{\underline s}}^2
\le |v|_{\mathbb V_{\kappa, s}}^2 + \|\kappa^{s(\cdot)}v\|_{L_2(\mathcal{G})}^2.
$
Now divide both sides of the inequality $\|\kappa^{s(\cdot)}v\|_{L_2(\mathcal{G})}^2\ge m^2\|v\|_{L^2(\mathcal G)}^2$ by $1 + c_1C/m^2$, we obtain 
\begin{align*}
\frac{m^2}{1 + c_1C/m^2}\,\|v\|_{L^2(\mathcal G)}^2 \le \frac{\|\kappa^{s(\cdot)}v\|_{L_2(\mathcal{G})}^2}{1 + c_1C/m^2} \leq |v|_{\mathbb V_{\kappa, s}}^2 + \|\kappa^{s(\cdot)}v\|_{L_2(\mathcal{G})}^2.
\end{align*}
Set $c_1' := \tfrac12\min \{\frac{c_1}{1 + c_1C/m^2},\;\frac{m^2}{1 + c_1C/m^2}\} > 0$. Then adding the two inequalities above and multiplying by $\tfrac12$ yields $c_1' \mathcal B_{-}(v,v) = c_1'\big(|v|_{H^{\underline s}}^2 + \|v\|_{L^2(\mathcal G)}^2\big)
\le |v|_{\mathbb V_{\kappa, s}}^2 + \|\kappa^{s(\cdot)}v\|_{L_2(\mathcal{G})}^2
= \mathcal A^{s(\cdot)}(v,v)$,
which proves the left inequality in \eqref{eq:form_two_sided}. As for the upper bound, using right inequality in \eqref{eq:gamma_bounds} we get
\begin{align*}
|v|_{\mathbb V_{\kappa, s}}^2
\le c_2\iint_{r\le r_0}\frac{(v(x)-v(y))^2}{r^{d+2\overline s}}\,dy\,dx
\;+\; C_{\mathrm tail}\iint_{r> r_0} (v(x)-v(y))^2\,dy\,dx.
\end{align*}
The first term is bounded above by $c_2\,|v|_{H^{\overline s}}^2$, while the second term is controlled by an $L^2$-term as before. Thus there exist constants $c_2>0$ and $C\ge0$ such that $|v|_{\mathbb V_{\kappa, s}}^2  \le c_2 |v|_{H^{\overline s}}^2 + C \|v\|_{L^2(\mathcal G)}^2$.
Adding the mass term and using $\|\kappa^{s(\cdot)}v\|_{L_2(\mathcal{G})}^2\le M^2\|v\|_{L^2}^2$ yields the right inequality in \eqref{eq:form_two_sided} with a suitable $c_2'>0$.
\added[id=rev]{Since the derivation above only used the pointwise kernel bounds \eqref{eq:gamma_bounds} together with Tonelli's theorem, the inequalities \eqref{eq:form_two_sided} hold directly for every $v\in\mathbb V_{\kappa, s}$, with no density argument being needed; by \sref{prop:meas_integrable} we also have $C_c^\infty(\mathcal D)\subset\mathbb V_{\kappa, s}$, and by \sref{prop:density_moll} every element of $\mathbb V_{\kappa, s}$ is approximated in the energy norm by functions in $C_c^\infty(\mathcal G)$ supported in an arbitrarily small neighbourhood of $\overline{\mathcal D}$.} Finally the embedding $\mathbb V_{\kappa, s}\hookrightarrow H^{\underline s}(\mathcal G)$ follows by the left inequality and it is continuous; since $\underline s>0$ and $\mathcal G$ is bounded, Rellich--Kondrachov gives compactness of $\mathbb V_{\kappa, s}\hookrightarrow L_2(\mathcal G)$, as required. As a result, the associated operator $A$ has compact resolvent and discrete spectrum \cite{Kato1995,Reed1978}. 
\end{proof}

\subsection{Proof of \sref{prop:gen_weak_solution_rhs=W}}
\begin{proof}
\replaced[id=rev]{By construction,}{Clearly} $u$ is a centered Gaussian indexed by $L^2(\mathcal G)$\added[id=rev]{ (each $u(\varphi)$ is an $L_2(\Omega;\mathbb R)$-limit of centered Gaussian random variables, hence centered Gaussian)}. Now fix $v\in\mathbb V_{\kappa,s}$. Choose any sequence $v_n\in\operatorname{Dom}(A)$ with $v_n\to v$ in the $\mathbb V_{\kappa,s}$-norm (for example $v_n=(I+\tfrac1n A)^{-1}v$). For each $n$ the element $A v_n$ lies in $L_2(\mathcal G)$ (by \sref{prop:form_compare_compact}), hence $u(A v_n)$ is defined and $
u(A v_n) = (1/\mu)\mathcal W(A^{-1}(A v_n)) = (1/\mu)\mathcal W(v_n)$.
Because $v_n\to v$ in $\mathbb V_{\kappa,s}$. Continuity of $\mathcal W:L_2(\mathcal G)\to L_2(\Omega; \mathbb R)$ therefore yields $u(A v_n) = \frac{1}{\mu}\,\mathcal W(v_n) \xrightarrow[n\to\infty]{L_2(\Omega; \mathbb R)} \frac{1}{\mu}\,\mathcal W(v)$.
Hence the limit $\lim_{n} u(A v_n)$ exists in $L_2(\Omega; \mathbb R)$ and is independent of the particular approximating sequence. Therefore define, for each $v\in\mathbb V_{\kappa,s}$, $\langle A u, v\rangle_{\mathbb V_{\kappa,s}',\mathbb V_{\kappa,s}}
:= \lim_{n\to\infty} u(A v_n)
= (1/\mu)\,\mathcal W(v)$, with the limit taken in $L_2(\Omega;\mathbb R)$. Note that by definition $\langle A u, v\rangle_{\mathbb V_{\kappa,s}',\mathbb V_{\kappa,s}}$ denotes the action of $A u$ on $v$ which is equivalent to the action of $u$ on $Av$ by linearity of $u$.
\end{proof}

\subsection{Proof of \sref{thm:spectral_underline_s}}
\begin{proof}
\medskip\noindent\textbf{(i)}
Fix $\varphi\in L_2(\mathcal G)$ and expand $\varphi=\sum_{k\ge1}\langle\varphi,\phi_k\rangle_{L_2(\mathcal G)}\phi_k$ in the orthonormal basis $(\phi_k)$. Consider the partial sums $S_N(\varphi):=\frac{1}{\mu}\sum_{j=1}^N \xi_j \lambda_j^{-1}\,\langle\phi_j,\varphi\rangle_{L_2(\mathcal G)}$. Each $S_N(\varphi)$ is a centered Gaussian random variable in $L_2(\Omega; \mathbb R)$ with variance $\operatorname{Var}\big(S_N(\varphi)\big)
\;=\; \mathbb E\big[|S_N(\varphi)|^2\big]
\;=\; (1/\mu^2)\sum_{j=1}^N \lambda_j^{-2}\,|\langle\phi_j,\varphi\rangle_{L_2(\mathcal G)}|^2$.
Since $\sum_j |\langle\phi_j,\varphi\rangle_{L_2(\mathcal G)}|^2=\|\varphi\|_{L_2(\mathcal G)}^2<\infty$ and $\lambda_j^{-2}\le\lambda_1^{-2}$, the variance series is bounded by $\lambda_1^{-2}\|\varphi\|_{L_2(\mathcal G)}^2/\mu^2$. Hence $(S_N(\varphi))_N$ is Cauchy in $L_2(\Omega; \mathbb R)$ and converges in $L_2(\Omega; \mathbb R)$. Linearity in $\varphi$ is preserved in the limit, and the variance bound yields continuity of the map $\varphi\mapsto u(\varphi)$ as a map from $L_2(\mathcal G)\to L_2(\Omega; \mathbb R)$. This proves (i).

\medskip\noindent\textbf{(ii)}
We compute the mean-square $H^r(\mathcal G)$-norm of $u$. Using the expansion of $u$ in $(\phi_j)$ and independence of the $\xi_j$, $\mathbb E\big[\|u\|_{H^r(\mathcal G)}^2\big] \;=\; (1/\mu^2)\sum_{j=1}^\infty \lambda_j^{-2}\,\|\phi_j\|_{H^r(\mathcal G)}^2$.
To show finiteness of the right-hand side for $r<2\underline s-\tfrac d2$ we need two ingredients, namely (A) a spectral lower bound given by \eqref{eq:lambda_growth}, and (B) an interpolation estimate for the eigenfunctions. In particular, the interpolation is taken between $L_2(\mathcal G)$ and $H^{\underline s}(\mathcal G)$. By standard interpolation (Gagliardo–Nirenberg interpolation), for $0\le r\le\underline s < 1$, $\|\phi_j\|_{H^r(\mathcal G)} \le \|\phi_j\|_{L_2(\mathcal G)}^{1-r/\underline s}\,\|\phi_j\|_{H^{\underline s}(\mathcal G)}^{r/\underline s}
= \|\phi_j\|_{H^{\underline s}(\mathcal G)}^{r/\underline s}$,
since $\|\phi_j\|_{L_2(\mathcal G)}=1$. From the variational identity $\lambda_j=\mathcal A^{s(\cdot)}(\phi_j,\phi_j)$ and the lower bound in \eqref{eq:form_two_sided} we have $\lambda_j \gtrsim \|\phi_j\|_{H^{\underline s}(\mathcal G)}^2$ (up to harmless additive constants coming from mass terms), so for large $j$, $\|\phi_j\|_{H^r(\mathcal G)}^2 \lesssim \lambda_j^{r/\underline s}$.
Therefore, we have $\mathbb E\big[\|u\|_{H^r(\mathcal G)}^2\big] \lesssim \sum_{j=1}^\infty \lambda_j^{-2 + r/\underline s}$.
Using \eqref{eq:lambda_growth} we obtain for large $j$, $\lambda_j^{-2 + r/\underline s} \lesssim j^{(2\underline s/d)\left(-2 + r/\underline s\right)}
= j^{(-4\underline s + 2r)/d}$.
The series $\sum_{j\ge1} j^{(-4\underline s + 2r)/d}$ converges precisely when $r<2\underline s - \tfrac d2$.
\added[id=rev]{The interpolation step requires $r\le\underline s$, which covers the full range $r<2\underline s-\tfrac d2$ whenever $d\ge2$, or $d=1$ and $\underline s\le\tfrac12$, since then $2\underline s-\tfrac d2\le\underline s$; in these cases $r_\star=2\underline s-\tfrac d2$ and the proof of (ii) is complete. In the remaining case $d=1$ and $\underline s>\tfrac12$, eigenfunction regularity beyond the form domain is required. Each eigenfunction satisfies $\phi_j=\lambda_j A^{-1}\phi_j$, so \sref{prop:reg_variable}(ii), which applies since $\underline s>\tfrac12>\tfrac d4$ and the domains are $C^\infty$ (its proof is independent of the present theorem), gives $\|\phi_j\|_{H^\sigma(\mathcal G)}\le C_\sigma\lambda_j$ for every $\sigma<r_g$. For $r=(1-\theta)\underline s+\theta\sigma$ with $\theta\in(0,1)$, interpolation between $H^{\underline s}(\mathcal G)$ and $H^{\sigma}(\mathcal G)$ together with $\|\phi_j\|_{H^{\underline s}(\mathcal G)}\lesssim\lambda_j^{1/2}$ yields $\|\phi_j\|_{H^r(\mathcal G)}\le\|\phi_j\|_{H^{\underline s}(\mathcal G)}^{1-\theta}\|\phi_j\|_{H^{\sigma}(\mathcal G)}^{\theta}\lesssim\lambda_j^{(1+\theta)/2}$. Hence $\mathbb E\big[\|u\|_{H^r(\mathcal G)}^2\big]\lesssim\sum_{j\ge1}\lambda_j^{\theta-1}\lesssim\sum_{j\ge1} j^{2\underline s(\theta-1)}$ by \eqref{eq:lambda_growth}, which converges precisely when $\theta<1-\tfrac1{2\underline s}$, that is, $r<\underline s+(\sigma-\underline s)\big(1-\tfrac1{2\underline s}\big)$. Letting $\sigma\uparrow r_g$ gives the claimed range $r<r_\star$ in \eqref{eq:r_star}.}
Thus for every $r<r_\star$ we have $\mathbb E\big[\|u\|_{H^r(\mathcal G)}^2\big]<\infty$, which implies $u\in H^r(\mathcal G)$ almost surely. Setting $r=0$ yields the particular statement $u\in L_2(\mathcal G)$ a.s. whenever $\underline s>d/4$.

\medskip\noindent\textbf{(iii)} Assume $\underline s>d/4$, we show $A u=(1/\mu)\mathcal W$ in the generalized sense. \replaced[id=rev]{For any $\varphi\in \operatorname{Dom}(A)$, write the spectral representation $A\varphi=\sum_k \lambda_k\,\langle\varphi,\phi_k\rangle_{L_2(\mathcal G)}\phi_k$ (convergent in $L_2(\mathcal G)$ since $\varphi\in\operatorname{Dom}(A)$). Using the definition of the action $u(\cdot)$ from (i) and orthonormality,}{For any $\varphi\in L_2(\mathcal G)$, write the spectral representation $A\varphi=\sum_k \lambda_k\,\langle\varphi,\phi_k\rangle_{L_2(\mathcal G)}\phi_k$ (convergent in $L_2(\mathcal G)$). Using the definition of $u$ from (i) and orthonormality,}
\begin{align*}
\replaced[id=rev]{u(A\varphi)}{\langle A u,\varphi\rangle_{L_2(\mathcal G)} = \langle u, A\varphi\rangle_{L_2(\mathcal G)}}
&= \frac{1}{\mu}\sum_{j=1}^\infty \xi_j \lambda_j^{-1}\,\langle\phi_j, A\varphi\rangle_{L_2(\mathcal G)} = \frac{1}{\mu}\sum_{j=1}^\infty \xi_j\,\langle\phi_j,\varphi\rangle_{L_2(\mathcal G)} = \frac{1}{\mu}\,\mathcal W(\varphi),
\end{align*}
where the final equality is the standard expansion of white noise in the orthonormal basis. All series converge in $L_2(\Omega; \mathbb R)$ by the variance bounds from part (i), so the identity holds in $L_2(\Omega; \mathbb R)$. Uniqueness among centered Gaussian generalized fields follows by testing any candidate solution $v$ against the eigenfunctions \replaced[id=rev]{$\phi_j=A(\lambda_j^{-1}\phi_j)$ with $\lambda_j^{-1}\phi_j\in\operatorname{Dom}(A)$, so that the relation $v(\phi_j)=v\big(A(\lambda_j^{-1}\phi_j)\big)=(1/\mu)\mathcal W(\lambda_j^{-1}\phi_j)=(1/\mu)\lambda_j^{-1}\xi_j$ determines all the coefficients $v(\phi_j)$}{$\phi_j$: the relation $\lambda_j\,\langle v,\phi_j\rangle_{L_2(\mathcal G)}=(1/\mu)\xi_j$ forces the coefficients $\langle v,\phi_j\rangle_{L_2(\mathcal G)}$ to equal $(1/\mu)\lambda_j^{-1}\xi_j$ for every $j$}, so $v$ and $u$ have identical finite-dimensional distributions and coincide as Gaussian generalized fields.
\end{proof}

\subsection{Proof of \sref{prop:pathwise_coincidence}}
\begin{proof}
Fix $\omega$ in the full-measure set where $\mathcal W(\omega)\in\mathbb V_{\kappa,s}'$ (which is true for $\underline s>d/2$).  
By the deterministic form representation (\sref{thm:form_rep}(ii)) there exists a unique
$u(\omega)\in\mathbb V_{\kappa,s}$ solving the variational equation $
\mathcal A^{s(\cdot)}\big(u(\omega),v\big)
=(1/\mu)\,\langle\mathcal W(\omega),v\rangle_{\mathbb V_{\kappa,s}',\mathbb V_{\kappa,s}}$, for all $v\in\mathbb V_{\kappa,s}$,
and $u(\omega)=\tfrac{1}{\mu}A^{-1}\mathcal W(\omega)$.  Take an arbitrary test function
$\varphi\in L_2(\mathcal G)$. Since $A^{-1}\varphi\in\operatorname{Dom}(A)\subset\mathbb V_{\kappa,s}$,
we may use $v=A^{-1}\varphi$ in the variational identity $\mathcal A^{s(\cdot)}\big(u(\omega),A^{-1}\varphi\big)
=(1/\mu)\,\langle\mathcal W(\omega),A^{-1}\varphi\rangle_{\mathbb V_{\kappa,s}',\mathbb V_{\kappa,s}}$.
By the form--operator relation $\mathcal A^{s(\cdot)}(w,z)=\langle Aw,z\rangle_{L_2(\mathcal G)}$ (valid for $w\in\operatorname{Dom}(A)$, $z\in\mathbb V_{\kappa,s}$) the left-hand side equals
$\langle Au(\omega),A^{-1}\varphi\rangle_{L_2(\mathcal G)}$. Using self-adjointness of $A$ and $A(A^{-1}\varphi)=\varphi$ we obtain $
\langle u(\omega),\varphi\rangle_{L_2(\mathcal G)}
=(1/\mu)\,\langle\mathcal W(\omega),A^{-1}\varphi\rangle_{\mathbb V_{\kappa,s}',\mathbb V_{\kappa,s}}$.
The right-hand side is precisely the defining action of the generalized solution $u$ on $\varphi$ in \sref{prop:gen_weak_solution_rhs=W}.
Thus for this $\omega$ and every $\varphi\in L_2(\mathcal G)$,
$\langle u(\omega),\varphi\rangle_{L_2(\mathcal G)}=u(\omega)(\varphi)$.
\end{proof}

\subsection{Proof of \sref{prop:discrete_wellposed}}
\begin{revblock}
\begin{proof}
The discretization is conforming ($\mathbb V_{\kappa,s}(\mathcal T)\subset\mathbb V_{\kappa,s}$ is a closed (finite-dimensional) subspace). The restriction of $\mathcal A^{s(\cdot)}(\cdot,\cdot)$ to $\mathbb V_{\kappa,s}(\mathcal T)$ therefore remains continuous and coercive with the same constants as in \sref{thm:form_rep}, so existence and uniqueness of the discrete solution follow from the Lax--Milgram theorem; equivalently, the associated stiffness matrix is symmetric positive definite. The load is almost surely finite, that is, each $\mathcal W(\psi_i)$, $i=1,\dots,N$, is a well-defined Gaussian random variable since $\psi_i\in L_2(\mathcal G)$ (see \autoref{subsec:sampling}), hence $\mathcal W(V)$ is defined for all $V\in\mathbb V_{\kappa,s}(\mathcal T)$ simultaneously on a set of probability one.
\end{proof}
\end{revblock}

\begin{revblock}
\subsection{Proof of \sref{prop:reg_variable}}
\label{app:proof_reg_variable}

The local boundedness step in the proof of \sref{prop:reg_variable} relies on the following result, which we quote from \cite{Cozzi2017} specialised to $p=2$.

\begin{lemma}[Local boundedness for nonlocal equations {\cite[Theorem~2.3, Theorem~8.1]{Cozzi2017}}]\label{lem:cozzi_bound}
Let $\mathcal{U}\subset\mathbb R^d$ be a bounded open set, $s_0\in(0,1)$, and let
$K:\mathbb R^d\times\mathbb R^d\to[0,\infty]$ be a symmetric measurable kernel satisfying
the two-sided bound
\begin{equation}\label{eq:cozzi_kernel}
  \frac{\Lambda^{-1}\,\chi_{B_{r_0}}(x{-}y)}{|x{-}y|^{d+2s_0}}
  \;\le\; K(x,y) \;\le\; \frac{\Lambda}{|x{-}y|^{d+2s_0}},
  \qquad x,y\in\mathbb R^d,
\end{equation}
for some $\Lambda\ge 1$ and $r_0>0$.
Let $u\in H^{s_0}(\mathcal{U})$ with
$\int_{\mathbb R^d}|u(y)|\,(1+|y|)^{-d-2s_0}\,dy<\infty$
be a weak solution of
$-\mathcal{L}u = f$ in $\mathcal{U}$, where $\mathcal L$ is any nonlocal operator of the integral form $\mathcal{L}u(x) := \mathrm{P.V.}\!\int_{\mathbb R^d}\!\bigl(u(x)-u(y)\bigr)\,K(x,y)\,dy$.
Define the nonlocal tail
\begin{equation}\label{eq:tail_def}
  \mathrm{Tail}(u;x_0,R)
  := R^{2s_0}\!\int_{\mathbb R^d\setminus B_R(x_0)}\!|u(y)|\,|x_0{-}y|^{-d-2s_0}\,dy.
\end{equation}
\begin{enumerate}[(i)]
\item If $d<2s_0$, then $u\in L^\infty_{\mathrm{loc}}(\mathcal{U})$
by the fractional Morrey embedding $H^{s_0}(\mathcal{U})\hookrightarrow L^\infty(\mathcal{U})$.
\item If $d\ge 2s_0$ and $f\in L_q(\mathcal{U})$ for some $q>d/(2s_0)$, then
$u\in L^\infty_{\mathrm{loc}}(\mathcal{U})$.
More precisely, for any $x_0\in\mathcal{U}$ and $0<R<\mathrm{dist}(x_0,\partial\mathcal{U})/4$,
\begin{equation}\label{eq:cozzi_Linfty}
  \|u\|_{L_\infty(B_R(x_0))}
  \le C\,R^{-d/2}\,\|u\|_{L_2(B_{2R}(x_0))}
    + \mathrm{Tail}(u;x_0,R)
    + C\,R^{2s_0-d/q}\,\|f\|_{L_q(B_{2R}(x_0))},
\end{equation}
with $C=C(d,s_0,\Lambda,q)$.
\end{enumerate}
\end{lemma}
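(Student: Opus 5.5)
The lemma is quoted from \cite{Cozzi2017}. Rather than re-derive it in full generality, I would check that the standard De Giorgi--Nash--Moser machinery for nonlocal operators goes through under the two-sided bound \eqref{eq:cozzi_kernel}, specialised to $p=2$.

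\emph{Part (i).} This needs no equation. If $d<2s_0$ (so $d=1$, $s_0>\tfrac12$), fix $x_0\in\mathcal U$ and a Lipschitz cutoff $\eta$ supported in $B_{2R}(x_0)\Subset\mathcal U$ with $\eta=1$ on $B_R(x_0)$. I would first show that $\eta u\in H^{s_0}(\mathbb R^d)$, using
\[
|\eta u(x)-\eta u(y)|\le|\eta(x)||u(x)-u(y)|+|u(y)||\eta(x)-\eta(y)|,
\]
where the second term is integrable because $\int|\eta(x)-\eta(y)|^2|x-y|^{-d-2s_0}\,dx\lesssim1$ uniformly in $y$. Then the Morrey embedding $H^{s_0}(\mathbb R^d)\hookrightarrow C^{0,s_0-d/2}(\mathbb R^d)$, obtained via Fourier transform and $\int(1+|\xi|^2)^{-s_0}\,d\xi<\infty$, gives $u\in L_\infty(B_R(x_0))$.

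\emph{Part (ii).} I would run De Giorgi's iteration in four steps.
\begin{enumerate}[(1)]
\item \emph{Caccioppoli inequality with tail.} Test the weak formulation with $w_k^2\eta^2$-type functions, i.e.\ $\eta^2 w_k$ with $w_k=(u-k)_+$ and $\eta$ a cutoff between $B_\rho$ and $B_{\rho'}$. Symmetrising and using the upper kernel bound yields
\[
\iint_{B_\rho\times B_\rho}\frac{|w_k(x)-w_k(y)|^2}{\Lambda\,|x-y|^{d+2s_0}}
\lesssim \frac{\rho'^{\,2-2s_0}}{(\rho'-\rho)^{2}}\|w_k\|_{L_2(B_{\rho'})}^2
+\|w_k\|_{L_1(B_{\rho'})}\,\sup_{x\in B_{\rho'}}\int_{\mathbb R^d\setminus B_{\rho'}}\frac{w_k(y)}{|x-y|^{d+2s_0}}\,dy
+\int_{B_{\rho'}}|f|\,w_k .
\]
On the left only the lower bound on $B_{r_0}$ is available. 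I would therefore restrict to $R\le r_0/4$ so that all pairs in $B_{2R}\times B_{2R}$ lie within the range of the lower bound; larger $R$ are handled by covering.
\item \emph{Sobolev step.} Apply the fractional Sobolev inequality on balls, with exponent $2^*=2d/(d-2s_0)$ if $d>2s_0$ and any large finite exponent if $d=2s_0$, to $\eta w_k$. Control the $f$-term by H\"older using $q>d/(2s_0)$, which is exactly what makes the power of the level-set measure exceed one.
\item \emph{Iteration.} Set $R_j=R(1+2^{-j})$, $k_j=\tilde k(1-2^{-j})$ and $Y_j=\|(u-k_j)_+\|_{L_2(B_{R_j})}^2/R^d$. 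The previous steps combine to $Y_{j+1}\le C b^j Y_j^{1+\epsilon}$. Choosing
\[
\tilde k\simeq R^{-d/2}\|u\|_{L_2(B_{2R})}+\mathrm{Tail}(u;x_0,R)+R^{2s_0-d/q}\|f\|_{L_q(B_{2R})}
\]
makes $Y_0$ small enough that $Y_j\to0$, hence $u\le\tilde k$ on $B_R$.
\item \emph{Two-sided bound.} Apply the same argument to $-u$, which solves the equation with $-f$, to get \eqref{eq:cozzi_Linfty}.
\end{enumerate}

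\emph{Main obstacle.} I expect the tail term in the Caccioppoli estimate to be the hardest step. The bound $|x-y|\ge(\rho'-\rho)\rho'^{-1}|x_0-y|$ is needed to replace the supremum over $x$ by $\mathrm{Tail}(u;x_0,R)$, with a factor $2^{j(d+2s_0)}$ that must be absorbed into the geometric constant $b^j$. The weighted-$L_1$ assumption $\int|u|(1+|y|)^{-d-2s_0}\,dy<\infty$ is exactly what makes the tail finite.
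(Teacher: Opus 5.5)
Your outline follows the same route as the source the paper quotes; the paper gives no proof beyond the citation and the remark after the lemma. The ingredients are a Caccioppoli inequality with nonlocal tail, the fractional Sobolev inequality, and De Giorgi's iteration with the level $\tilde k$ absorbing $\mathrm{Tail}(u;x_0,R)$. Your treatment of the forcing by H\"older plus fractional Sobolev, where $q>d/(2s_0)$ is what makes the iteration superlinear, is exactly the modification described in that remark. For $R\le r_0/4$ the argument is sound.

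There are two small slips. First, the supremum in your Caccioppoli display must run over $x\in\operatorname{supp}\eta\subset B_{(\rho+\rho')/2}$, not over $B_{\rho'}$. Otherwise it is infinite, and your lower bound on $|x-y|$ in terms of $|x_0-y|$ fails. Second, in part (i), pairs with $y\notin\mathcal U$ should be estimated through $\operatorname{dist}(\operatorname{supp}\eta,\partial\mathcal U)>0$ rather than your product-rule splitting. Outside $\mathcal U$ you only have the weighted $L_1$ bound, not $u\in L_2$.

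The step that fails is ``larger $R$ are handled by covering''. Cover $B_R(x_0)$ by balls $B_r(z)$ with $r\simeq r_0$ and apply the small-radius estimate. Then $r^{-d/2}\|u\|_{L_2(B_{2r}(z))}\le (R/r)^{d/2}R^{-d/2}\|u\|_{L_2(B_{2R}(x_0))}$. The part of $\mathrm{Tail}(u;z,r)$ coming from $B_{2R}(x_0)\setminus B_r(z)$ is only bounded by a multiple of $(R/r)^{d}R^{-d/2}\|u\|_{L_2(B_{2R}(x_0))}$. So you obtain \eqref{eq:cozzi_Linfty} with a constant depending on $R/r_0$, not with $C=C(d,s_0,\Lambda,q)$.

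No argument can do better, because the uniform statement is false for $R\gg r_0$. Rescale to $r_0=\Lambda=1$ and $K(x,y)=\chi_{B_1}(x-y)|x-y|^{-d-2s_0}$. By Taylor expansion, $-\mathcal Lv=c\,\Delta v+O(\|D^4v\|_{L_\infty(B_1(x))})$ with $c=c(d,s_0)>0$, so on scales $\gg1$ the operator acts like $c\Delta$. For $d\ge3$ make the following choices:
\begin{itemize}
\item $N(x)=\delta^2N_1(x/\delta)$, where $-\Delta N_1$ is a nonnegative unit bump, and $\delta=R^a$;
\item $v=N\chi(\cdot/R)$, with $\chi=1$ on $B_3$ and $\operatorname{supp}\chi\subset B_4$;
\item $f:=-\mathcal Lv$ and $\mathcal U=B_{5R}$.
\end{itemize}
Then $\sup_{B_R}v\ge N(0)\simeq\delta^2$. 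Since $N\simeq\delta^d|x|^{2-d}$ for $|x|\ge\delta$, both $R^{-d/2}\|v\|_{L_2(B_{2R})}$ and $\mathrm{Tail}(v;0,R)$ are $o(\delta^2)$ as $\delta/R\to0$. Also $\|f\|_{L_q(B_{2R})}\lesssim\delta^{d/q}$, so $R^{2s_0-d/q}\|f\|_{L_q(B_{2R})}=o(\delta^2)$ whenever $(2s_0-d/q)/(2-d/q)<a<1$.

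Hence the lemma must be restricted to $R\lesssim r_0$, or $C$ must be allowed to depend on $R/r_0$. That restricted version is exactly what your argument proves. It is also all that the proof of \sref{prop:reg_variable} uses, since there $r_0=\rho$ and $R$ is comparable to $\rho$.
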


We remark that the kernel bound \eqref{eq:cozzi_kernel} differs from Cozzi's convention by a $(1{-}s_0)$ normalisation factor (included there for consistency with the local limit $s_0\to 1^-$); for fixed $s_0\in(0,1)$ this factor is absorbed into $\Lambda$. When only $f\in L_q(\mathcal U)$ is available (Cozzi states the result for right-hand sides bounded pointwise), the forcing integral in the Caccioppoli inequality is estimated via H\"older's inequality and the fractional Sobolev embedding, which produces the $R^{2s_0-d/q}\|f\|_{L_q}$ term in \eqref{eq:cozzi_Linfty} and requires precisely $q>d/(2s_0)$.

Throughout the proof below, $\omega(\rho):=\sup\{|s(x)-s(y)|:x,y\in\overline{\mathcal G},\,|x-y|\le\rho\}$ denotes the modulus of continuity of $s$, $B_\rho(x)$ a ball of radius $\rho$ centered at $x$, and for a frozen order $s_0\in[\underline s,\overline s]$ we write $\gamma_{s_0}$ for the constant-order kernel obtained from $\gamma$ by replacing $\beta(x,y)$ with $s_0$, and $\mathcal A^{s_0}$ for the associated constant-order bilinear form.

\begin{proof}[Proof of \sref{prop:reg_variable}]
Fix $x_0\in\overline{\mathcal D}$ and set $s_0:=s(x_0)$. By \sref{lem:gamma_two_regime}(a) and the frozen-order comparison argument of \sref{lem:frozen_order}, the variable-order kernel satisfies
\begin{equation}\label{eq:kernel_comparability}
  c\,|x{-}y|^{-(d+2s_0)} \;\le\; \gamma(x,y) \;\le\; C\,|x{-}y|^{-(d+2s_0)},
  \qquad x,y\in B_\rho(x_0),\;\; 0<|x{-}y|\le\rho,
\end{equation}
for $\rho$ small enough, with constants $c,C>0$ depending on $\omega(\rho)$.

Now choose $\eta\in C_c^\infty(B_\rho(x_0))$ with $\eta\equiv 1$ on $B_{\rho/2}(x_0)$, and set $w:=\eta u$.
Writing $\mathcal A^{s_0}(w,\phi)=\mathcal A^{s_0}(\eta u,\phi)$ and adding and subtracting the variable-order bilinear form $\mathcal A^{s(\cdot)}(\eta u,\phi)$ yields
\begin{equation}\label{eq:localized_frozen_identity}
  \mathcal A^{s_0}(w,\phi)
  = \langle \eta f,\phi\rangle_{L_2(\mathcal G)}
  + \mathcal{C}_{\mathrm{om}}^\eta(u,\phi)
  + \mathcal{E}^{s_0}(u,\phi),
  \qquad \phi\in\mathbb V_{\kappa,s},
\end{equation}
where the commutator and freezing errors are
\[
  \mathcal{C}_{\mathrm{om}}^\eta(u,\phi)
  := \iint_{\mathcal G\times\mathcal G}
  \bigl(\eta(x)-\eta(y)\bigr)
  \bigl(u(y)\phi(x)-u(x)\phi(y)\bigr)
  \gamma_{s_0}(x,y)\,dy\,dx,
\]
\begin{align*}
  \mathcal{E}^{s_0}(u,\phi)
  &:= \int_{\mathcal D}\bigl(\kappa^{2s_0}-\kappa^{2s(x)}\bigr)\eta(x)u(x)\phi(x)\,dx \\
  &\quad + \iint_{\mathcal G\times\mathcal G}
  \bigl(u(x)-u(y)\bigr)\bigl(\eta(x)\phi(x)-\eta(y)\phi(y)\bigr)
  \bigl(\gamma_{s_0}(x,y)-\gamma(x,y)\bigr)\,dy\,dx.
\end{align*}
Next, we bound $|\mathcal{E}^{s_0}(u,\phi)|$. The error $\mathcal{E}^{s_0}$ consists of a mass term and a jump-kernel term. As for the mass term,
since $|s(x)-s_0|\le\omega(\rho)$ on $B_\rho(x_0)\supset\operatorname{supp}\eta$, we have
$|\kappa^{2s_0}-\kappa^{2s(x)}|=\kappa^{2s_0}|1-\kappa^{2(s(x)-s_0)}|\le C\,\omega(\rho)$
(using $|e^t-1|\le C|t|$ for bounded $t$), so
\[
  \Bigl|\int_{\mathcal D}\bigl(\kappa^{2s_0}-\kappa^{2s(x)}\bigr)\eta u\,\phi\,dx\Bigr|
  \le C\,\omega(\rho)\,\|u\|_{L_2(\mathcal G)}\|\phi\|_{L_2(\mathcal G)}.
\]
As for the jump-kernel term, we split the domain
of the bilinear integral involving $\gamma_{s_0}-\gamma$ into
$\{|x-y|\le\rho\}\cap(\operatorname{supp}\eta\times\mathcal G)$ and $\{|x-y|>\rho\}\cap(\operatorname{supp}\eta\times\mathcal G)$.

On $\{|x-y|\le\rho\}$ with $x\in B_\rho(x_0)$, the kernel difference satisfies
$$|\gamma_{s_0}(x,y)-\gamma(x,y)|\le C\,\omega(\rho)\,(|\log|x{-}y||+1)\,|x-y|^{-(d+2s_0)},$$
since $\beta(x,y)=s_0+O(\omega(\rho))$ and both the normalization constant $c(\beta)$ and the power $|x-y|^{-2\beta}$ are Lipschitz in $\beta$ with a logarithmic correction from differentiating $|x-y|^{-2\beta}$ in $\beta$. For any $\delta>0$, the logarithmic factor is absorbed by $|x-y|^{-\delta}$, giving
$|\gamma_{s_0}(x,y)-\gamma(x,y)|\le C_\delta\,\omega(\rho)\,|x-y|^{-(d+2s_0+\delta)}$.
Therefore, by Cauchy--Schwarz,
\begin{align*}
  &\Bigl|\iint_{|x-y|\le\rho}(u(x)-u(y))(\eta\phi(x)-\eta\phi(y))(\gamma_{s_0}-\gamma)\,dy\,dx\Bigr|\\
  &\quad\le C_\delta\,\omega(\rho)
  \Bigl(\iint\frac{|u(x)-u(y)|^2}{|x-y|^{d+2s_0+\delta}}\,dy\,dx\Bigr)^{1/2}
  \Bigl(\iint\frac{|\eta\phi(x)-\eta\phi(y)|^2}{|x-y|^{d+2s_0+\delta}}\,dy\,dx\Bigr)^{1/2}\\
  &\quad\le C_\delta\,\omega(\rho)\,\|u\|_{H^{s_0+\delta/2}(\mathcal G)}\,\|\phi\|_{H^{s_0+\delta/2}(\mathcal G)}.
\end{align*}

On $\{|x-y|>\rho\}$, one can rewrite the kernel as
$\gamma(x,y)=\tfrac{1}{2}P(\beta)\,K_\nu(\kappa r)\,r^{-\nu}\,r^{-(d+2\beta)}\,r^{d/2+\beta+\nu-\nu}$, i.e., with $r=|x-y|$, $\beta=\beta(x,y)$, $\nu=d/2+\beta$, define the map $F(\beta,r):=P(\beta)\,K_{d/2+\beta}(\kappa r)\,r^{d/2+\beta}\,r^{-(d+2\beta)}$ so that $\gamma=\tfrac{1}{2}F(\beta(x,y),r)$ and $\gamma_{s_0}=\tfrac{1}{2}F(s_0,r)$, where $P(\beta)$ is the smooth prefactor from the proof of \sref{lem:gamma_two_regime}.
By the two-regime kernel bounds (\sref{lem:gamma_two_regime}(b)), for $r\ge r_0>0$ and any $\beta\in[\underline s,\overline s]$,
$|F(\beta,r)|\le C\,r^{-1/2}\,e^{-\kappa r}$,
so in particular both $\gamma_{s_0}$ and $\gamma$ are bounded on $\{r>\rho\}$ (with $\rho\ge r_0$). By the mean value theorem in $\beta$,
$|F(s_0,r)-F(\beta,r)|=|\partial_\beta F(\tilde\beta,r)|\,|s_0-\beta|$
for some $\tilde\beta$ between $s_0$ and $\beta$.
Since $x\in\operatorname{supp}\eta\subset B_\rho(x_0)$, we have $|s_0-\beta(x,y)|\le\omega(\rho)$.
It remains to verify that $\partial_\beta F(\tilde\beta,r)$ is bounded for $r\ge\rho$.
The $\beta$-derivative of $F$ involves $\partial_\beta P(\beta)$ (which is bounded), together with $\partial_\nu K_\nu(z)|_{z=\kappa r}$ and
$\partial_\beta[r^{-d/2-\beta}]=(-\log r)\,r^{-d/2-\beta}$.
For the Bessel derivative, the Nicholson-type bound $|\partial_\nu K_\nu(z)|\le C(1+|\log z|)\,K_\nu(z)$ (see \cite{Olver2010nist}) combined with the large-argument bound $K_\nu(z)\le C\,z^{-1/2}e^{-z}$ for $z\ge z_0$ (\sref{lem:Bessel_uniform}(ii)) gives $|\partial_\nu K_\nu(\kappa r)|\le C\,(1+\log(\kappa r))\,r^{-1/2}\,e^{-\kappa r}$ for $r\ge\rho$.
The logarithmic factors $\log r$ and $\log(\kappa r)$ are absorbed by the exponential decay on $\{r\ge\rho\}$, yielding
$|\partial_\beta F(\tilde\beta,r)|\le C_\rho$ uniformly for $r\ge\rho$ and $\tilde\beta\in[\underline s,\overline s]$.
Therefore $|\gamma_{s_0}(x,y)-\gamma(x,y)|\le C_\rho\,\omega(\rho)$ uniformly on $\{|x-y|>\rho\}$.

Returning to the bilinear integral, the integrand is bounded by $C_\rho\,\omega(\rho)\,|u(x)-u(y)|\,|\eta(x)\phi(x)-\eta(y)\phi(y)|$, and Cauchy--Schwarz with the $L_2(\mathcal G)$ norms of $u$ and $\phi$ (using $\mathcal G$ bounded) gives a contribution $\le C_\rho\,\omega(\rho)\,\|u\|_{L_2(\mathcal G)}\|\phi\|_{L_2(\mathcal G)}$.
Combining and relabelling $\delta$ yields
\begin{equation}\label{eq:localized_error_bound}
  |\mathcal{E}^{s_0}(u,\phi)|
  \le C_{\delta,\rho}\,\omega(\rho)
  \bigl(\|u\|_{H^{s_0+\delta}(\mathcal G)}\|\phi\|_{H^{s_0+\delta}(\mathcal G)}
  + \|u\|_{L_2(\mathcal G)}\|\phi\|_{L_2(\mathcal G)}\bigr).
\end{equation}

We are now ready to prove \eqref{eq:var_reg_interior}. Fix $x_0\in\mathcal D'$ and choose $\rho>0$ with $B_\rho(x_0)\subset\mathcal D$. Set $s_0=s(x_0)$. Since $\underline{s}'>d/4$, shrinking $\rho$ if necessary ensures $s_0>d/4$ for every center in the cover of $\overline{\mathcal D'}$. We apply \sref{lem:cozzi_bound} with $K=\gamma$ and $\mathcal U=B_\rho(x_0)$.
The kernel comparability \eqref{eq:kernel_comparability} verifies
hypothesis \eqref{eq:cozzi_kernel}, since the lower bound holds for $|x{-}y|\le\rho$ (taking $r_0=\rho$),
while the upper bound extends to all of $\mathbb R^d$ because for $|x{-}y|>\rho$ the
far-field kernel bound (\sref{lem:gamma_two_regime}(b)) gives
$\gamma(x,y)\le C_\rho\,|x{-}y|^{-1/2}e^{-\kappa|x{-}y|}$.
We distinguish two cases depending on the spatial dimension. In case $d<2s_0$, part (i) of \sref{lem:cozzi_bound} applies, since the fractional Morrey embedding gives $u\in L^\infty_{\mathrm{loc}}(B_\rho(x_0))$ directly. In case $d\ge 2s_0$, part (ii) of \sref{lem:cozzi_bound} applies with $q=2$.
The admissibility condition $q>d/(2s_0)$, i.e.\ $2>d/(2s_0)$, is equivalent to $s_0>d/4$,
which holds by hypothesis.
Estimate \eqref{eq:cozzi_Linfty} (with $R=3\rho/4$) gives
\[
  \|u\|_{L_\infty(B_{3\rho/4}(x_0))}
  \le C\bigl(\|u\|_{L_2(B_\rho(x_0))}+\rho^{2s_0-d/2}\|f\|_{L_2(B_\rho(x_0))}+\mathrm{Tail}(u;x_0,\rho)\bigr),
\]
where the tail \eqref{eq:tail_def} is finite since $u$ is supported in $\overline{\mathcal G}$.

Next, on $B_{\rho/2}(x_0)$, $\eta\equiv 1$ so the commutator $\mathcal{C}_\mathrm{om}^\eta$ vanishes.
The freezing error \eqref{eq:localized_error_bound} contributes an $L_2$ perturbation of size $O(\omega(\rho))$.
The right-hand side of \eqref{eq:localized_frozen_identity} therefore belongs to $L_2(B_{\rho/2}(x_0))$, and the constant-order interior estimate for $(\kappa^2-\Delta)^{s_0}$ gives
\[
  \|u\|_{H^{2s_0-\varepsilon}(B_{\rho/2}(x_0))}
  \le C_{\varepsilon,\rho}\bigl(\|f\|_{L_2(B_\rho(x_0))}
  + \|u\|_{H^{\underline s}(\mathcal G)}
  + \|u\|_{L_\infty(B_{3\rho/4}(x_0))}\bigr).
\]

Finally, cover $\overline{\mathcal D'}$ with finitely many balls $B_{\rho/2}(x_i)$ with $s(x_i)\ge\underline{s}'-\omega(\rho)$.
The inequality $\|u\|_{L_2(\mathcal G)}\leq \|u\|_{H^{\underline s}(\mathcal G)}$, the energy estimate $\|u\|_{H^{\underline s}(\mathcal G)}\le c\|f\|_{L_2(\mathcal G)}$ (which follows from the left inequality in \eqref{eq:form_two_sided} and $\|u\|_{\mathbb V_{\kappa,s}}^2=\mathcal A^{s(\cdot)}(u,u)=\langle f,u\rangle_{L_2(\mathcal G)}\le\|f\|_{L_2(\mathcal G)}\|u\|_{L_2(\mathcal G)}$ together with the continuous embedding $\mathbb V_{\kappa,s}\hookrightarrow L_2(\mathcal G)$), and the local boundedness bound control all right-hand side terms by $\|f\|_{L_2(\mathcal G)}$.
Summing the local estimates and absorbing the $\omega(\rho)$ loss proves \eqref{eq:var_reg_interior}.

Next, we prove the global regularity \eqref{eq:var_reg_global}.
We cover $\overline{\mathcal D}$ by finitely many balls $B_\rho(x_i)$. When $x_0\in\partial\mathcal D$,
the frozen problem $r^+ A_{s_0}w=g$ on the $C^\infty$ domain $\mathcal D$ falls within the scope of \cite[Theorem~7.1]{Grubb2015} (with $a=s_0$, $p=2$, $s=2s_0$, where $r^+$ is the restriction-to-$\mathcal D$ operator, $r^+v:=v|_{\mathcal D}$). The frozen integral-form operator $A_{s_0}$ coincides with the fractional power $(\kappa^2-\Delta)^{s_0}$, which by \cite[Lemma~2.9 and Example~2.8]{Grubb2015} is a classical pseudodifferential operator of order $2s_0$ satisfying the $\mu$-transmission condition with $\mu=s_0$ and factorization index $s_0$; its homogeneous Dirichlet realization is the operator described in \cite[Example~7.2]{Grubb2015}. \cite[Theorems~5.4 and~7.1, Corollary~5.5]{Grubb2015} then yield the decomposition $w=\operatorname{dist}(x,\partial\mathcal D)^{s_0}v_0+w_{\rm reg}$ with $v_0\in H^{s_0-\varepsilon}(\mathcal D)$ and $w_{\rm reg}\in H^{2s_0-\varepsilon}(\mathcal D)$, and combining with the embedding of $\mu$-transmission spaces into standard Sobolev spaces gives
\[
  w \in H^{t'}(\mathcal D) \qquad\text{for every } t'<\min(2s_0,\,s_0+\tfrac{1}{2}).
\]
The threshold $s_0+\tfrac{1}{2}$ reflects $\operatorname{dist}(x,\partial\mathcal D)^{s_0}\in H^\sigma(0,1)$ iff $\sigma<s_0+\tfrac{1}{2}$. When $x_0\in\mathcal D$ with $B_\rho(x_0)\subset\mathcal D$, there is no boundary constraint and the frozen operator gives the full $2s_0$ Sobolev gain. Finally, since $\underline s\le s(x_0)$ for every center, combining the estimates across the finite cover gives the rate $\min(2\underline s,\,\underline s+\tfrac{1}{2})$.
\end{proof}
\end{revblock}

\subsection{Proof of \sref{prop:FEM_MS_rate}}
\begin{proof}
Setting $t=0$ in \eqref{eq:scott_zhang}, squaring and taking expectation to both sides and noticing that $\mathbb E \|u\|_{H^r(\mathcal G)}^2 < \infty$ for \replaced[id=rev]{$r<r_\star$ by \sref{thm:spectral_underline_s}(ii)}{$r<2\underline s -d/2$}, we get \eqref{eq:FEM_L2_error}.
\end{proof}

\subsection{Proof of \sref{prop:local-quad-error}}
\begin{proof}
Our argument is based on the derivative-free quadrature error estimates \cite{davis1975interpolation}. We shall check whether each component in the integrand in \eqref{eq:transformed-integrand_adj} or \eqref{eq:transformed-integrand_ident} can be analytically extended to a closed ellipse $\mathcal E_\rho$ with focus points $0$ and $1$ and where $\rho>\tfrac12$ is the sum of semimajor and semiminor axes. \replaced[id=rev]{By hypothesis $s$ extends to a bounded analytic function on a complex neighborhood $\mathcal N$ of $\overline{\mathcal G}$, and the affine maps $T_i$ together with $\xi(\zeta)=\zeta^{1/(3-2\overline s)}$ and $\eta(t)$ are analytic away from the origin, so the composed map $(x,y)=(T_i(\xi(\zeta)),T_{i+1}(\xi(\zeta)\eta(t)))$, and with it $\beta$ and $\Phi$, is analytic in each variable on any region avoiding $\zeta=0$ (respectively $t=0$) whose image stays in $\mathcal N$. The factors $\zeta^{\frac{2(\overline s-\beta)}{3-2\overline s}}$ and $t^{\frac{2\overline s-2\beta}{2-2\overline s}}$ have nonnegative, in general noninteger, exponents and hence a branch point at the origin. We therefore work on ellipses avoiding it. For $\rho\in(\tfrac12,1)$, let $\mathcal E_\rho^\delta\subset\mathcal E_\rho\setminus\{0\}$ be the slightly shrunken confocal ellipse whose closure excludes the branch cut $(-\infty,0]$; the integrand is analytic on $\mathcal E_\rho^\delta$ in each variable, uniformly in the other variable in $[0,1]$, and the principal branches of the power factors are bounded there since their exponents are nonnegative and their moduli are bounded by $C\sup_{\mathcal E_\rho}|\zeta|^{\Re\alpha}e^{\pi|\Im\alpha|}$ with $\alpha$ ranging over the (bounded) exponent values. This choice restricts the admissible $\rho$ but preserves the geometric rate.}{To this end, we first note that the final transformation $(x,y) = (T_i(\xi(\zeta)), T_{i+1}(\xi(\zeta)\eta(t)))$ is component-wise analytic since the variable order $s(x)$ is an analytic function, hence, $\beta$ and the whole integrand is analytic in the complex plane.} Now we bound the integrand for each component. Letting $\rho\in(\tfrac12,1)$, there hold
\begin{align*}
  \sup_{\substack{\zeta\in \mathcal E_\rho \\ t\in[0,1]}} |h^{1 - 2\beta} \, \zeta^{\frac{2\overline{s} - 2\beta}{3 - 2\overline{s}}} t^{\frac{2\overline{s} - 2\beta}{2 - 2\overline{s}}} \Phi(\zeta,t)| \le C h^{1-2\overline s}
  \text{ and } \sup_{\substack{t\in \mathcal E_\rho \\ \zeta\in[0,1]}} |h^{1 - 2\beta} \, \zeta^{\frac{2\overline{s} - 2\beta}{3 - 2\overline{s}}} t^{\frac{2\overline{s} - 2\beta}{2 - 2\overline{s}}} \Phi(\zeta,t)| \le C h^{1-2\overline s} .
\end{align*}
Thus we invoke Theorem~5.3.15 in \cite{Sauter2011}\added[id=rev]{, applied on the shrunken ellipses $\mathcal E_\rho^\delta$ (with $2\rho$ replaced by the corresponding sum of semiaxes, absorbed into $\rho$),} to get  $
 \displaystyle |\mathcal A^2_{\tau,\tau'}(\psi_j,\psi_i) - Q_{\tau,\tau'}^n| \le Ch^{1-2\overline s}(2\rho)^{-2n}$.
\end{proof}

%
%
%
%
%
%
%
%

\bibliographystyle{plainnat}
\bibliography{references1}
\end{document}